\documentclass{amsart}
\usepackage{amssymb}
\usepackage{amsmath}
\usepackage{graphicx,color}
\usepackage{latexsym}
\usepackage{amsfonts}
\usepackage{mathdots}
\usepackage{eurosym}
\usepackage{enumerate}

\usepackage[active]{srcltx}

\newtheorem{theorem}{Theorem}[section]

\newtheorem{lemma}[theorem]{Lemma}
\newtheorem{corollary}[theorem]{Corollary}

\theoremstyle{definition}

\newtheorem{example}[theorem]{Example}

\newtheorem{remark}[theorem]{Remark}

\numberwithin{equation}{section}


\newcommand{\nC}{{\mathbb C}}
\newcommand{\nN}{{\mathbb N}}

\newcommand{\kip}{[\,\cdot\, , \cdot\,]}

\newcommand{\llf}{[\![} \newcommand{\rlf}{]\!]}
\newcommand{\lip}{\llf \,\cdot\, ,  \cdot\,\rlf}

\newcommand{\cK}{{\mathcal K}}
\newcommand{\cL}{{\mathcal L}}

\newcommand{\cN}{{\mathcal N}}

\newcommand{\fA}{{\mathfrak A}}
\newcommand{\fB}{{\mathfrak B}}
\newcommand{\fC}{{\mathfrak C}}
\newcommand{\fD}{{\mathfrak D}}

\newcommand{\fH}{{\mathfrak H}}

\newcommand{\fL}{{\mathfrak L}}

\newcommand{\wh}[1]{{#1}}
\newcommand{\V}[1]{\mathsf{#1}}
\newcommand{\mf}[1]{\mathfrak{#1}}

 \DeclareMathOperator{\lspan}{span}
 \DeclareMathOperator{\ospan}{span}

 \DeclareMathOperator{\diag}{diag}
 \DeclareMathOperator{\codim}{codim}
 \DeclareMathOperator{\ran}{ran}
 \DeclareMathOperator{\dom}{dom}
 \DeclareMathOperator{\mul}{mul}

 \DeclareMathOperator{\im}{Im}
 
 \DeclareMathOperator{\hol}{hol}
 \DeclareMathOperator{\extdeg}{extdeg}
  \DeclareMathOperator{\intdeg}{intdeg}
  \DeclareMathOperator{\rank}{rank}

\newcommand{\iu}{{\mathrm i}}

\begingroup
\count0=\time \divide\count0by60 
\count2=\count0 \multiply\count2by-60 \advance\count2by\time
\def\2#1{\ifnum#1<10 0\fi\the#1}
\xdef\isodayandtime%
{\the\year-\2\month-\2\day\space\2{\count0}:\2{\count2}}

\endgroup

\newcounter{myenum}
  {\end{list}}

\newcounter{myenumi}
  {\end{list}}

\begin{document}

\title[On the reproducing kernel]{On the reproducing kernel of a Pontryagin space \\ of vector valued polynomials}
\author{B.~\'{C}urgus}
\address{Department of Mathematics, Western Washington University, Bellingham, WA 98225, USA} \email{\tt curgus@wwu.edu}
\author{A.~Dijksma}
\address{Johann Bernoulli Institute of Mathematics and Computer Science \\ University of Groningen \\
P.O. Box 407 \\
9700 AK Groningen, The Netherlands} \email{\tt
a.dijksma@rug.nl}
\subjclass[2000]{46C20, 46E22, 47A06}

\keywords{Polynomials, generalized
Nevanlinna pair, Pontryagin space, reproducing kernel, Smith normal form, Forney indices, symmetric operator, defect number, self-adjoint extension, $Q$-function}

\begin{abstract} We give necessary and sufficient conditions under which the reproducing kernel of a Pontryagin space of $d \times 1$ vector polynomials is determined by a generalized Nevanlinna pair of $d \times d$ matrix polynomials.

\end{abstract}
\date{\today}

\maketitle

\section{Introduction}
\subsection{}
By Baire's category theorem, the Pontryagin space $\fB$ in the title is necessarily finite dimensional (see Remark~\ref{finitedim} below) and hence is a reproducing kernel space. Indeed, if $\bigl(\fB,\kip_\fB\bigr)$ is an $n$-dimensional Pontryagin space of $d \times 1$ vector polynomials and if $B(z)$ is a $d\times n$ matrix polynomial whose columns  $B_k(z)$, $k \in \{1,\ldots,n\}$, form a basis of $\fB$, then the reproducing kernel of $\mathfrak B$ is the $d \times d$ matrix polynomial in $z$ and $w^*$ given by
 \[
K(z,w)=B(z) G^{-1} B(w)^*, \quad z, w \in \nC,
 \]
where $G$ is the $n \times n$ Gram matrix associated with $B(z)$, that is,
 \[
G=[g_{jk}]_{j,k=1}^n, \quad g_{jk}=[ B_k,B_j ]_\fB, \quad j,k \in \{1,\ldots,n\}
 \]
(see \cite[Example 2.1.8]{abook} and the remark following it). The reproducing kernel of a reproducing kernel space is unique but can often be written in various ways. In this paper we give  necessary and sufficient conditions under which  $K(z,w)$ above is a  polynomial {\em Nevanlinna kernel}. This means that it can be written in the form
\begin{equation*} 
K(z,w) = K_{M,N}(z,w) :=  \dfrac{M(z) N(w)^*-N(z) M(w)^*}{z-w^*}, \ \ z, w \in \nC, \ z \neq w^*,
\end{equation*}
where $M(z)$ and $N(z)$ are $d \times d$ matrix polynomials such that
\begin{align} \nonumber
 & M(z) N(z^*)^*-N(z) M(z^*)^* = 0  \qquad
  \text{for all} \qquad  z \in \nC \\
\intertext{and}
   \label{irankz}
  & \quad \rank \bigl[ M(z) \, \ N(z)\bigr] = d  \qquad \text{for at least one} \qquad  z\in \nC.
\end{align}
If, in addition, the equality in \eqref{irankz} holds for all $z \in \mathbb C$, then the Nevanlinna kernel $K_{M,N}(z,w)$ is called a {\em full Nevanlinna kernel}.


The following theorem is the main result in this paper. It is proved in Section~\ref{s4}.
\begin{theorem} \label{tchpnk}
Let $\fB$ be a $($finite dimensional$)$ Pontryagin space of $d\times 1$ vector polynomials. Denote by $S_\fB$ the operator of multiplication by the independent variable in $\fB$ and by $E_\alpha$ the operator of evaluation at a point $\alpha \in \nC$. Then the reproducing kernel of $\fB$ is a polynomial Nevanlinna kernel if and only if the following two conditions hold:
\begin{enumerate}[{\rm (A)}]
\item \label{tchpnkca}
The operator $S_\fB$ is symmetric in $\fB$.
\item \label{tchpnkcb}
For some $\alpha \in \nC$ we have  $\ran \bigl(S_\fB -\alpha \bigr) = \fB \cap \ker E_{\alpha}$.
\end{enumerate}
 In this case the reproducing Nevanlinna kernel is full if and only if the equality in \eqref{tchpnkcb} holds for all $\alpha \in \mathbb C$.
\end{theorem}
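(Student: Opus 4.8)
The plan is to prove the two equivalences (``Nevanlinna'' and ``full'') separately, after first recording what (A) and (B) say operator-theoretically.

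\emph{Preliminaries.} Put $k_wu:=K(\cdot,w)u$; the reproducing property gives $E_\alpha^{*}u=k_\alpha u$, so $\ker E_\alpha=\bigl(\ran K(\cdot,\alpha)\bigr)^{[\perp]}$ in $\fB$. Since $\ran(S_\fB-\alpha)=\{\,g\in\fB:g(\alpha)=0,\ g/(z-\alpha)\in\fB\,\}$, one always has $\ran(S_\fB-\alpha)\subseteq\ker E_\alpha$, and because $S_\fB-\alpha$ is injective, $\dim\dom S_\fB=\dim\ran(S_\fB-\alpha)\le\dim\ker E_\alpha=n-\dim\ran E_\alpha$, where $n:=\dim\fB$. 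Hence (B) holds at $\alpha$ iff $\dim\dom S_\fB=n-\dim\ran E_\alpha$; in particular (B) holds for \emph{some} $\alpha$ iff $\dim\dom S_\fB$ equals $n$ minus the maximal value of $\dim\ran E_\alpha$, and for \emph{all} $\alpha$ iff moreover $\dim\ran E_\alpha$ is independent of $\alpha$. Throughout one must watch the possible degeneracy of $[\,\cdot\,,\cdot\,]_\fB$ on the subspaces $\ran(S_\fB-\alpha)$ and $\ran K(\cdot,\alpha)$; finite-dimensionality of $\fB$ and the lemmas of the preceding sections keep this under control.

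\emph{Necessity.} Suppose $K=K_{M,N}$. Multiplying the kernel identity by $z-w^{*}$ gives $z\,K(z,w)u=w^{*}K(z,w)u+M(z)N(w)^{*}u-N(z)M(w)^{*}u$, so whenever $k_wu\in\dom S_\fB$ the element $S_\fB(k_wu)-w^{*}k_wu$ is the polynomial $M(\cdot)N(w)^{*}u-N(\cdot)M(w)^{*}u$. Combining this with the pair symmetry $M(z)N(z^{*})^{*}=N(z)M(z^{*})^{*}$ one gets $[S_\fB f,g]_\fB=[f,S_\fB g]_\fB$ for $f,g\in\dom S_\fB$ (this is the known symmetry of multiplication in $\fL(M,N)$-type spaces), which is (A). For (B), I would show that $\rank[M(\alpha)\ N(\alpha)]=d$ implies $\ran(S_\fB-\alpha)=\ker E_\alpha$; since by \eqref{irankz} such an $\alpha$ exists (in fact all but finitely many do), (B) follows, and the same argument gives the equivalence $\rank[M(\alpha)\ N(\alpha)]=d\iff(\text{B at }\alpha)$. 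As ``full'' means $\rank[M(z)\ N(z)]=d$ for every $z$, a full representation forces (B) at all $\alpha$.

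\emph{Sufficiency and the full/general dichotomy.} Assume (A) and (B), the latter at some $\alpha_0$. Then $S_\fB$ is symmetric in the finite-dimensional Pontryagin space $\fB$ with $\ran(S_\fB-\alpha_0)=\ker E_{\alpha_0}$, so its defect subspace is $\ker(S_\fB^{*}-\alpha_0^{*})=(\ker E_{\alpha_0})^{[\perp]}=\ran K(\cdot,\alpha_0)$. I would then manufacture $(M,N)$ either (i) by passing to a minimal self-adjoint extension $A\supseteq S_\fB$ --- a self-adjoint linear relation in a possibly larger Pontryagin space $\widehat\fB\supseteq\fB$, which exists for any symmetric relation in a Pontryagin space --- reading the $d\times d$ matrix polynomials $M,N$ off the boundary map at $\alpha_0$ so that $MN^{-1}$, where defined, is the $Q$-function of $A$ relative to $S_\fB$, and verifying $K=K_{M,N}$ on the generators $k_wu$ via the resolvent of $A$; or (ii), in the spirit of this paper, by fixing a basis matrix $B(z)$ of $\fB$, bringing it into a column-reduced / Smith form that exhibits the Forney indices of $\fB$, and reading $\dom S_\fB$, the evaluation ranks, and a valid pair $M,N$ directly off that form. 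In either route the pair symmetry and \eqref{irankz} come out of (A) and (B)-at-$\alpha_0$, and the last assertion of the theorem follows by tracking $\rank[M(z)\ N(z)]$ through the construction: fullness of the constructed pair is equivalent to $\dim\ran E_\alpha$ being constant in $\alpha$, which by the preliminaries is exactly ``(B) at all $\alpha$''.

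I expect the genuine difficulty to lie in the sufficiency construction together with the rank bookkeeping: producing honest $d\times d$ matrix polynomials with the Nevanlinna symmetry and the prescribed rank profile while correctly handling degeneracy of $[\,\cdot\,,\cdot\,]_\fB$ on $\ran(S_\fB-\alpha)$, possibly unequal defect numbers of $S_\fB$ inside $\fB$, and the multivalued part of the self-adjoint extension --- the very feature that distinguishes the non-full case. A smaller but still non-obvious point is the equivalence $\rank[M(\alpha)\ N(\alpha)]=d\iff(\text{B at }\alpha)$ used in the necessity part, since in general $\dim\ran E_\alpha$ is strictly smaller than $\rank[M(\alpha)\ N(\alpha)]$.
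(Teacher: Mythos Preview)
Your proposal is a reasonable high-level plan, but the key technical steps are precisely the ones you flag as ``I would show'' without indicating how, and these are where the real work lies.

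\textbf{Necessity.} The heart of the necessity direction is exactly the implication you defer: $\rank[M(\alpha)\ N(\alpha)]=d\Rightarrow\ran(S_\fB-\alpha)=\fB\cap\ker E_\alpha$. You give no mechanism for this, and it is not a simple rank count (as you yourself note, $\dim\ran E_\alpha$ need not equal $\rank\mathbf P(\alpha)$). The paper's route is substantially different from anything your outline suggests: it first reduces to the case where $\mathbf P(z)$ has full rank everywhere and is row-reduced, then uses the Forney indices $\sigma_1,\ldots,\sigma_d$ and a delicate auxiliary construction (Lemma~\ref{llp} together with Lemma~\ref{ldouble} applied to the kernel $L_p(z,w)=\iu(z^p-w^{*p})K(z,w)$) to compute $\dim\fB=\sigma_1+\cdots+\sigma_d$, and from this concludes that $\fB$ is \emph{equal} to the canonical space $\bigoplus_j(\nC[z]_{<\sigma_j})e_{d,j}$. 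Once $\fB$ is identified as canonical, (B) for all $\alpha$ is immediate. Your sketch of (A) via the kernel identity is also incomplete: individual columns $k_wu$ are generally \emph{not} in $\dom S_\fB$, so the displayed formula for $S_\fB(k_wu)$ is not available; one must first characterize $\dom S_\fB$ (the paper does this via $\mathbf P_\infty$, which again requires the canonical identification).

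\textbf{Sufficiency.} Your route (i) via a self-adjoint extension and its $Q$-function is essentially the paper's alternative ``analytic'' proof in Section~\ref{s5} (Theorem~\ref{tQ}). Two points you should not overlook: the defect numbers of $S_\fB$ are automatically equal (Lemma~\ref{lsr}; your worry about unequal defect numbers is unfounded), and they equal some $l\le d$, so the $Q$-function is $l\times l$ and must be padded with zeros and conjugated by a suitable $d\times d$ polynomial $N(z)$ to produce a genuine $d\times d$ Nevanlinna pair---this is where Theorem~\ref{tchucsg} (the structure $\fB=W\fC$) enters essentially. No extension to a larger space $\widehat\fB$ is needed; self-adjoint relations inside $\fB$ suffice. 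The paper's \emph{primary} (Section~\ref{s4}) proof of sufficiency is different again: it builds the pair geometrically from a basis of the reproducing kernel space with kernel $\iu(z-w^*)K(z,w)$, shown via an explicit partial isometry to have equal positive and negative index $l$.

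In short: your outline points in defensible directions, but both directions of the theorem hinge on identifying $\fB$ (up to an invertible polynomial factor) with a canonical space via Theorem~\ref{tchucsg} and the Forney-index machinery, and this structural step is absent from your plan.
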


We think Theorem~\ref{tchpnk} is new, possibly even in the positive definite case, that is, the case where the space $\mathfrak B$ is a reproducing kernel Hilbert space of vector polynomials. In that case $\mathfrak B$ in the theorem is a special case of  L.~de~Branges' Hilbert spaces of entire functions. For scalar functions, see \cite{dbbook}; for vector functions, see \cite{db1966} and \cite{dbr}. In particular, \cite[Theorems 1-3]{dbr} are closely related to Theorem~\ref{tchpnk}.  For results on the indefinite scalar case we refer to the series of papers on Pontryagin spaces of entire functions by M.~Kaltenb\"{a}ck and H.~Woracek. More specifically, \cite[Theorem 5.3]{kw19991} is closely related to Theorem~\ref{tchpnk} with $d=1$, \cite[Proposition 2.8]{kw2009} can be used to obtain a scalar version of Theorem~\ref{tchucsg} below, and \cite[Lemma 6.4]{kw19991} is linked with Theorem~\ref{tQ} in Section~\ref{s5}.
The emphasis in this paper is on vector polynomials and an indefinite setting.
\subsection{}
In the proof of Theorem~\ref{tchpnk} we use the following result which shows that the condition \eqref{tchpnkcb} in Theorem~\ref{tchpnk} completely determines the structure of $\mathfrak B$ as a linear space. We believe Theorem~\ref{tchucsg} is also new, but closely related to results around \cite[Proposition~2.3]{dv}. For the proof of Theorem~\ref{tchucsg} we refer to Section~\ref{s3}.
\begin{theorem} \label{tchucsg}
Let $\fB$ be a finite dimensional linear space of $d \times 1$ vector polynomials and let $\alpha \in \nC$.  The equality
\begin{equation} \label{eqSakg}
\ran \bigl(S_{\mf B} - \alpha \bigr) = \mf B \cap \ker E_{\alpha}
\end{equation}
holds if and only if there exist nonnegative integers $\mu_1, \ldots, \mu_d$ and a $d\times d$ matrix polynomial $W(z)$ with $\det W(\alpha) \neq 0$ such that the space $\fB$ consists of all vector polynomials of the form $W(z) \bigl[ p_1(z) \cdots p_d(z) \bigr]^\top$ where $p_j(z)$ runs through all scalar polynomials of degree strictly less than $\mu_j$, $j \in \{1,\ldots,d\}$.
The matrix $W(z)$ can be chosen such that
\begin{equation} \label{eqdetrannul}
\bigl\{\alpha \in \nC \,:\, \det W(\alpha) \neq 0 \bigr\} = \bigl\{\alpha \in \nC \,:\, \ran \bigl(S_{\mf B} - \alpha\bigr) = \mf B \cap \ker E_{\alpha}\bigr\}.
\end{equation}
\end{theorem}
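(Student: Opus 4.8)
The plan is to prove the equivalence first and then, using the representation it produces, to refine $W$ so that \eqref{eqdetrannul} holds. For the reduction note that the inclusion $\ran(S_{\mf B}-\alpha)\subseteq\mf B\cap\ker E_\alpha$ holds for any finite dimensional $\mf B$, since $g=(z-\alpha)f$ with $f,(z-\alpha)f\in\mf B$ gives $g\in\mf B$ and $g(\alpha)=0$; thus \eqref{eqSakg} is equivalent to: $g\in\mf B$, $g(\alpha)=0$ $\Rightarrow$ $g(z)/(z-\alpha)\in\mf B$. If $\mf B=\{W(z)[p_1\cdots p_d]^\top:\deg p_j<\mu_j\}$ with $\det W(\alpha)\neq0$ and $g=Wq\in\mf B$ vanishes at $\alpha$, then $W(\alpha)q(\alpha)=0$ forces $q(\alpha)=0$, so $z-\alpha$ divides every $p_j$ with quotient of degree $<\mu_j$ (or $p_j=0$ when $\mu_j=0$), whence $g/(z-\alpha)=W\bigl(q/(z-\alpha)\bigr)\in\mf B$. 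This proves the ``if'' part and, applied at every $\alpha$ with $\det W(\alpha)\neq0$, shows $\{\alpha:\det W(\alpha)\neq0\}\subseteq\{\alpha:\eqref{eqSakg}\text{ holds}\}=:\Lambda$ for any such $W$.

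For the ``only if'' part I would use a string construction. Assume \eqref{eqSakg} and set $\mf B_k=\{f\in\mf B:(z-\alpha)^k\mid f\}$, giving a finite filtration $\mf B=\mf B_0\supseteq\mf B_1\supseteq\cdots\supseteq\mf B_{m+1}=\{0\}$. For $k\ge1$ every $f\in\mf B_k$ vanishes at $\alpha$, hence lies in $\ran(S_{\mf B}-\alpha)$ by \eqref{eqSakg}, so $Df:=f/(z-\alpha)\in\mf B$; this defines an injective linear map $D\colon\mf B_k\to\mf B_{k-1}$ taking $\mf B_{k+1}$ into $\mf B_k$, hence an injective map $\overline D\colon\mf B_k/\mf B_{k+1}\to\mf B_{k-1}/\mf B_k$. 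Next, for each $k$ choose a complement $\cC_k$ of $\overline D(\mf B_{k+1}/\mf B_{k+2})$ in $\mf B_k/\mf B_{k+1}$, lift a basis of $\cC_k$ to vectors $g\in\mf B_k$, and attach to each such $g$ the string $g,Dg,\dots,D^kg$, which has the form $(z-\alpha)^kb,\dots,(z-\alpha)b,b$ with $b:=g/(z-\alpha)^k$ and $b(\alpha)\neq0$. Using the decomposition $\mf B_j/\mf B_{j+1}=\bigoplus_{k\ge j}\overline D^{\,k-j}(\cC_k)$ and a descending induction on the order of vanishing, one shows that the union of all strings is a basis of $\mf B$; a dimension count yields exactly $\rho:=\dim(\mf B_0/\mf B_1)$ strings, with bottoms $b_1,\dots,b_\rho$ (their values at $\alpha$ linearly independent, since $E_\alpha$ is injective on $\mf B/\mf B_1$) and lengths $\mu_1,\dots,\mu_\rho$. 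Hence $\mf B=\{\sum_{j=1}^\rho p_jb_j:\deg p_j<\mu_j\}$, and extending $b_1(\alpha),\dots,b_\rho(\alpha)$ to a basis of $\nC^d$ by constant vectors $b_{\rho+1},\dots,b_d$ and putting $\mu_{\rho+1}=\cdots=\mu_d=0$, $W=[\,b_1\cdots b_d\,]$, gives the required form with $\det W(\alpha)\neq0$. I expect the linear independence of the strings to be the main technical step; the rest is bookkeeping.

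For the last assertion, $\Lambda$ is cofinite by the first paragraph; put $F=\nC\setminus\Lambda$. The $\nC[z]$-module $\mf M=\nC[z]\mf B=\bigoplus_{j\le\rho}\nC[z]b_j$ is free of rank $\rho$, so by the Smith normal form there are a matrix polynomial $U$ with $\det U$ a nonzero constant and polynomials $\delta_1\mid\cdots\mid\delta_\rho$ with $\mf M=U\bigl(\bigoplus_{i=1}^\rho\nC[z]\delta_ie_i\bigr)$. A short computation with \eqref{eqSakg} (given $(z-\alpha)v\in\mf M$, split off the constant terms of the coefficients to obtain an element of $\mf B$ vanishing at $\alpha$, then divide by $z-\alpha$) shows that $\nC[z]^d/\mf M$ has no $(z-\alpha)$-torsion for $\alpha\in\Lambda$; since that torsion is supported on the zeros of $\delta_\rho$, all zeros of $\delta_\rho$ lie in $F$. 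Transporting $\mf B$ by $U^{-1}$ and dividing the $i$-th coordinate by $\delta_i$ produces a finite dimensional $\cC\subseteq\nC[z]^\rho$ with $\nC[z]\cC=\nC[z]^\rho$ which, because no $\delta_i$ vanishes on $\Lambda$, still has the division property at every point of $\Lambda$. As $\nC[z]\cC=\nC[z]^\rho$, the maximal minors of a basis matrix of $\cC$ have no common zero; running the string construction for $\cC$ at any point of $\Lambda$ gives $\cC=\widehat W\{[p_1\cdots p_\rho]^\top:\deg p_i<\nu_i\}$ with $\widehat W$ square and $\det\widehat W(\gamma)\neq0$ for \emph{all} $\gamma$, i.e.\ $\det\widehat W$ a nonzero constant. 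Then $W:=U\,\diag\!\bigl(\diag(\delta_1,\dots,\delta_\rho)\widehat W,\ I_{d-\rho}\bigr)$, together with the $\nu_i$ and $\mu_j=0$ for $j>\rho$, represents $\mf B$, and $\det W$ is a nonzero constant times $\prod_i\delta_i$, whose zero set equals that of $\delta_\rho$ and hence lies in $F$. Combined with $\{\det W\neq0\}\subseteq\Lambda$ this gives $\{\det W=0\}=F$, which is \eqref{eqdetrannul}.

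I expect two points to need care. The linear independence of the strings is the technical heart of the ``only if'' part. For the last assertion the subtlety is that $\mf B$ need not sit as a ``coordinate box'' inside $\mf M=\nC[z]\mf B$ — it may lie diagonally — so $W$ cannot simply be read off from the Smith normal form of $\mf M$; the string construction must be run at a point of $\Lambda$ for the transported space $\cC$, and it is precisely the condition $\nC[z]\cC=\nC[z]^\rho$ that makes the resulting square matrix unimodular.
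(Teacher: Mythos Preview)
Your proof is correct, and it takes a genuinely different route from the paper's.

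For the ``only if'' direction the paper first proves the special case in which \eqref{eqSakg} holds for \emph{all} $\alpha$ (their Theorem~3.4) by induction on $\dim\fB$, via three preparatory lemmas (Lemmas 3.1--3.3) that analyze $\dom S_\fB$, the case $\fB\cap\ker E_\alpha=\{0\}$, and how to enlarge a canonical piece $\fC+S\fC$ inside $\fB$. Your argument bypasses all of this with a direct Jordan-chain (``string'') construction at the single point $\alpha$: the filtration $\fB_k$, the injective quotient maps $\overline D$, and the complements $\cC_k$ are exactly the standard mechanism producing a Jordan basis for a nilpotent-type operator, and the key identity $\fB_j/\fB_{j+1}=\bigoplus_{k\ge j}\overline D^{\,k-j}(\cC_k)$ makes the linear-independence step routine. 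This is more elementary and also more transparent: the degrees $\mu_j$ appear immediately as the string lengths, and the columns of $W$ as the string bottoms.

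For the last assertion both proofs use the Smith normal form, but in different ways. The paper proves a rank lemma (Lemma~3.5) showing that $\Lambda:=\{\alpha:\eqref{eqSakg}\text{ holds}\}$ equals the set of $\alpha$ where a basis matrix $B(\alpha)$ attains its generic rank $l$, then factors $B(z)$ through a full-rank matrix $B_1(z)$ and applies Theorem~3.4 to the corresponding space $\fB_1$. You instead pass to the $\nC[z]$-module $\mf M=\nC[z]\fB$, use a short torsion argument (your ``split off $c_j(\alpha)$ and divide'' step, which is clean and worth stating explicitly with $c_j(z)=c_j(\alpha)+(z-\alpha)\tilde c_j(z)$) to see that $\nC^d[z]/\mf M$ has no $(z-\alpha)$-torsion for $\alpha\in\Lambda$, hence all zeros of the largest invariant factor $\delta_\rho$ lie in $F$, and then rerun the string construction on the transported space $\cC$, using $\nC[z]\cC=\nC^\rho[z]$ to force $\widehat W$ unimodular via $\widehat W\,\nC^\rho[z]=\nC[z]\cC=\nC^\rho[z]$. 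This module-theoretic viewpoint is a nice alternative to the paper's rank lemma, and it makes explicit that the zero set of $\det W$ is exactly the support of the torsion of $\nC^d[z]/\mf M$.

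One small presentational point: when you say ``split off the constant terms of the coefficients'', make clear that you mean the values $c_j(\alpha)$, not $c_j(0)$; the computation $(z-\alpha)v-\sum_j(z-\alpha)\tilde c_j b_j=\sum_j c_j(\alpha)b_j\in\fB\cap\ker E_\alpha$ is the crux and deserves to be displayed.
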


\noindent It follows that the dimension of $\fB$ in Theorem~\ref{tchucsg} is  $\mu_1 + \cdots +\mu_d$. If the conditions \eqref{tchpnkca} and \eqref{tchpnkcb} of Theorem~\ref{tchpnk} hold, then the numbers $\mu_1, \ldots, \mu_d $ are the Forney indices of the block matrix polynomial $\bigl[ M(z) \ N(z)\bigr]$ corresponding to the reproducing Nevanlinna kernel of $\fB$.  Moreover, the defect numbers of $S_\fB$ coincide with the cardinality of the set \mbox{$\bigl\{j\in \{1,\ldots,d\} \,:\, \mu_j > 0\bigr\}$,} see Remarks~\ref{remdefect} and~\ref{rlast}. This offers a direct way of determining the dimension of the reproducing kernel space $\fB$ with reproducing Nevanlinna kernel $K_{M,N}(z,w)$ and the defect numbers of $S_\fB$ from the block matrix polynomial $\bigl[ M(z) \ N(z)\bigr]$.

\medskip

In the scalar case ($d=1$) the space $\mathfrak B$ in the above theorems is analogous  to the so-called Szeg\"o space, in the Hilbert space setting defined and studied in \cite{XL1, XL2} and in the Pontryagin space setting in \cite{amanuscript}. In the literature there are many papers characterizing special forms of the reproducing kernel of a reproducing kernel space. Of those related to a reproducing kernel Pontryagin space we mention \cite[Section~6]{ad} and \cite{a}. We refer to the references in these papers for papers dealing with the Hilbert space case. The characterizations in these works are often in terms of a special identity to be satisfied by the difference-quotient operator on the space. In some cases, such as in \cite[Theorem 4.1]{a} and \cite[Problems~51, Theorem~23]{dbbook} the invertibility of  $K(z,z)$ for some values of $z$ plays a role in proving the asserted representation of the kernel $K(z,w)$. We give in Section~\ref{s6} some examples where ${\det}K(z,w)=0$ for all $z,w \in \mathbb C$, see Example~\ref{example1} and Example~\ref{example2}.


\subsection{}
A pair $\{M(z), N(z)\}$ of $d \times d$ matrix functions $M(z)$ and $N(z)$ is called a {\em generalized Nevanlinna pair} if the functions are meromorphic on $\mathbb C \setminus \mathbb R$, the intersection of the domains of holomorphy ${\hol}(M)$ of $M(z)$ and ${\hol}(N)$ of $N(z)$ is symmetric with respect to the real axis,
\begin{align}
\label{iPneuG}
 & M(z) N(z^*)^*-N(z) M(z^*)^* = 0  \quad
  \text{for all} \quad  z \in  {\hol}(M)\cap {\hol}(N),\\
   \label{irankzG}
  & \rank \bigl[ M(z) \ \ N(z)\bigr] = d  \quad \text{for at least one} \quad  z\in {\hol}(M)\cap {\hol}(N),
\end{align}
and the {\em Nevanlinna kernel}
\begin{equation} \label{eqNevMNG}
K_{{M,N}}(z,w) :=  \dfrac{M(z)N(w)^*\!-\!N(z) M(w)^*}{z-w^*}, \, z, w \in {\hol}(M)\cap {\hol}(N), z \neq w^*,
\end{equation}
has a finite number of negative squares. Here, by a {\em finite number of negative squares} we mean that the set of numbers of negative eigenvalues counted according to multiplicity of the self-adjoint matrices of the form
 $$
\big[ \, x_j^* K_{M,N}(z_j,z_i)x_i \,\big]_{i,j=1}^n
 $$
with
 $$ n \in \mathbb N, \ x_i \in \nC^d, \   z_i \in  {\hol}(M)\cap {\hol}(N) , \  z_i\neq z_j^*, \  i,j \in \{1,\ldots, n\}$$
has a maximum. If this maximum is $\kappa$, then we say that the pair and the kernel have $\kappa$ negative squares. If $\kappa=0$ the adjective ``generalized'' is omitted; in that case the matrix functions are holomorphic at least on $\mathbb C \setminus \mathbb R$. The {\em number of positive squares} is defined in the same way.
 The pair and kernel are called {\em full} if the equality in \eqref{irankzG} holds for all $z \in {\hol}(M)\cap {\hol}(N)$.
If a (generalized) Nevanlinna pair $\{M(z), N(z)\}$ is such that $N(z)=I_d$, the $d \times d$ identity matrix, then it is identified with its first entry $M(z)$ and $M(z)$ is a ({\em generalized}) {\em Nevanlinna function}.

Nevanlinna pairs and generalized Nevanlinna pairs have been used in interpolation and moment problems (see \cite{kl1977}, \cite{abds1, abds2} and \cite{ade}), the description of generalized resolvents (see \cite{kl1971}) and in the theory of boundary value problems with eigenvalue dependent boundary conditions (see \cite{dls1984, dls1988}, \cite{cdr} and \cite{acdJFA}). Theorem~\ref{tchpnk} arose in our study \cite{acdNew} of an eigenvalue problem for an ordinary differential operator in a Hilbert space with boundary conditions which depend polynomially on the eigenvalue parameter. In that paper we linearize the original problem by extending the Hilbert space with a finite dimensional Pontryagin space of $d \times 1$ vector polynomials. This paper concerns the structure of such spaces.

\subsection{}
The Nevanlinna pair in a Nevanlinna kernel is not unique (see the paragraph before Example~\ref{example2}) and if $\{M(z), N(z)\}$ is a pair that determines the kernel, then the polynomial matrix $N(z)$ may be such that ${\det}N(z)=0$ for all $z \in \mathbb C$. In Section~\ref{s5}  we prove that one can always choose the pair so that ${\det}N(z)\not \equiv 0$ and the rational generalized Nevanlinna matrix function $N(z)^{-1}M(z)$ is essentially a $Q$-function of the symmetric operator $S_\mathfrak B$. We show that every self-adjoint extension of $S_\mathfrak B$ with nonempty resolvent set gives rise to a reproducing Nevanlinna kernel for the space $\mathfrak B$. The proof of Theorem~\ref{tchpnk} given in Section~\ref{s4} is  geometric, the proof of the first if statement in Theorem~\ref{tchpnk} given in Section~\ref{s5} is analytic. The last two examples in Section \ref{s6}, Example~\ref{example1} and Example~\ref{example2}, also serve to show that this analytic proof is constructive. In Section \ref{s6} we present three corollaries of Theorem \ref{tchpnk} and four examples.

\medskip

In Section~\ref{s2} we fix the notation related to vector and matrix polynomials and we recall the Smith normal form and the Forney indices of a matrix polynomial. Moreover, we prove some lemmas on the structure of a degenerate subspace of a finite dimensional Pontryagin space, the defect numbers of a simple symmetric relation in such a space and on polynomial Hermitian kernels. Although most proofs in this paper are based on methods from linear algebra, in the sequel we assume that the reader is familiar with (i) Pontryagin spaces and (multi-valued) operators on such spaces such as symmetric and self-adjoint relations (as in \cite{ikl}, \cite{as} and \cite{bdhs}), (ii) generalized Nevanlinna matrix functions (as in \cite{kl1977, kl1981}) and (iii) reproducing kernel Pontryagin spaces (as in \cite[Chapter 1]{adrs} and \cite[Chapter~7]{abook}).

The notion of a $Q$-function of a simple symmetric operator in a Pontryagin space is recalled in Section~\ref{s5}.

\medskip

{\bf Acknowledgements}. The authors thank Prof.~Marius van der Put for many discussions about Theorem~\ref{tchucs} and another proof of it and   Prof.~Daniel Alpay for acquainting them with his unpublished note \cite{amanuscript} which among other things lead to Remark~\ref{finitedim} and pointing out the references \cite{XL1, XL2}. We also thank the referee for useful comments.

\section{Notation and basic objects} \label{s2}

\subsection{}
The symbols $\mathbb N$, $\mathbb R$, and $\mathbb C$ denote the sets of positive integers, real numbers and  complex numbers. For $d \in \nN$  the vector space of all $d \times 1$ vectors is written as $\nC^d$ and $I_d$ stands for the
$d\times d$ identity matrix. The $k$-th row of $I_d$ will be denoted by $e_{d,k}$. For $k \in \{1,\ldots,n\}$ the subspace of $\nC^d$ spanned by $e_{d,1},\ldots,e_{d,k}$ will be called a {\em top coordinate subspace of} $\nC^d$; it will be denoted by $\nC^d_k$. The corresponding $d\times d$ projection matrix is denoted by $P_{d,k}$. We consider $\nC^d_0 = \{0\}$ a top coordinate subspace spanned by the empty set.

By $\nC^d[z]$ we denote the vector space over $\nC$ of all polynomials with coefficients in $\nC^d$. The space $\nC^d$ is identified with the subspace of all constant polynomials in $\nC^d[z]$. If $d=1$ we simply write $\mathbb C[z]$ and $\mathbb C$.
For $f \in \nC^d[z]\setminus\{0\}$ with
\[
f(z) = a_0 + a_1 z + \cdots + a_n z^n
\]
and for the zero polynomial $0$ we define
\begin{equation*}
\deg f = \max \bigl\{ k \in \{0,\ldots, n\} \,:\, a_k \neq 0 \bigr\}
\quad \text{and} \quad \deg 0  = - \infty.
\end{equation*}
Matrix polynomials are written as $B(z), M(z), N(z), \ldots$, that is, with their argument $z$; we use the bold face ${\mathbf P}(z), {\mathbf S}(z), \ldots$, for $d\times 2d$ matrix polynomials. Vector polynomials are sometimes written with and sometimes without their argument. The Fraktur alphabet $\fA, \fB, \fC,  \fH, \ldots$ is used to denote vector subspaces of $\nC^d[z]$. One exception to this is that $\fL$ will be used for a subspace of $\nC^{2d}[z]$.  An inner product on $\fB$  is denoted by $\kip_{\fB}$.  In a vector space, the symbol $\oplus$ denotes the direct sum of subspaces.
\begin{remark} \label{finitedim}
A Banach space with a countable Hamel basis is separable and hence, by \cite{L}, it is finite dimensional. Since \mbox{$\bigl\{z^n \,:\, n \in \{0\}\cup\nN\bigr\}$} is a countable Hamel basis of  ${\mathbb C}[z]$, the space $\nC^d[z]$ and all its subspaces also have  countable Hamel bases. Therefore any Pontryagin subspace of $\nC^d[z]$ is finite dimensional. In spite of this fact, to emphasize the finite dimensionality, we continue to speak of finite dimensional Pontryagin subspaces of $\nC^d[z]$.
\end{remark}
We introduce some special subspaces of $\nC^d[z]$. Let $n \in \{ 0\}\cup \nN$. The symbol $\nC^d[z]_{< n}$ stands for the set of all $f \in \nC^d[z]$ such that $\deg f < n$. In particular, $\nC^d[z]_{< 1}   = \nC^d$ and
$\nC^d[z]_{< 0}  = \{0\}$. A subspace $\fC$ of $\nC^d[z]$ is called {\em canonical} if there exist nonnegative integers $\mu_k, k \in \{1,\ldots,d\}$, such that
\begin{align*}
\fC & = \bigoplus_{k=1}^d  \bigl(\nC[z]_{< \mu_{k}}\bigr) e_{d,k}
\\ & =\bigl\{[p_1(z)\cdots p_d(z)]^\top \,:\, p_k(z) \in \mathbb C[z],\ {\deg}p_k<\mu_k,k\in \{1,\ldots,d\}\bigr\}.
\end{align*}
The numbers $\mu_1, \ldots, \mu_d$ will be called the {\em degrees of} $\fC$.   Without loss of generality we can assume that they are ordered:
$\mu_1 \geq \cdots \geq \mu_d \geq 0$. Then a canonical subspace is uniquely determined by its degrees. Clearly, the dimension of $\fC$ is the sum of its degrees.

Next we introduce some useful operators on $\nC^d[z]$. By ${P}_{d,k}$, $k \in \{1,\ldots,d\}$, we denote the natural extension of $P_{d,k}$ to $\nC^d[z]$,
by $\wh{S} \,:\, \nC^d[z] \to \nC^d[z]$ the operator of multiplication by  the independent variable, that is,
\begin{equation*}
(\wh{S}f)(z)  = z f(z), \quad f \in \nC^d[z],
\end{equation*}
and by $\wh{E}_\alpha \,:\, \nC^d[z] \to \nC^d$ the evaluation operator at the point $\alpha \in \nC$:
\begin{equation*}
\wh{E}_\alpha(f)  = f(\alpha), \quad f \in \nC^d[z].
\end{equation*}
It follows from the fundamental theorem of algebra that
\begin{equation} \label{eqrhS=khE}
\ran\bigl(\wh{S} - \alpha \bigr) = \ker E_\alpha.
\end{equation}
A wide class of operators on $\nC^d[z]$ is induced by $d \times d$ matrix polynomials. If $M(z)$ is such a polynomial we define the operator $\wh{M} : \nC^d[z] \to \nC^d[z]$ by
\[
\bigl(\wh{M}f\bigr)(z) = M(z) f(z), \quad f \in \nC^d[z].
\]
Clearly, $M S = S M$.
A square matrix polynomial is {\em unimodular} if its determinant is identically equal to a nonzero constant. If $M(z)$ is a unimodular matrix polynomial we will call $M$ a {\em unimodular} operator. In this case  ${M}$ is a bijection  and its inverse is also a unimodular operator.

\subsection{}
In the sequel we use that any nonzero $d \times n$  matrix polynomial $B(z)$  admits a {\em Smith normal form} representation (see for example \cite[Satz~6.3]{frg} or \cite{KTh}):
\begin{equation} \label{eqsnf}
B(z) = U(z) \begin{bmatrix} D(z) & 0 \\[4pt]
0 & 0
\end{bmatrix} V(z),
\end{equation}
where $U(z)$ is a $d\times d$  unimodular matrix polynomial, $V(z)$ is an $n\times n$  unimodular matrix polynomial and the matrix in the middle is a $d \times n$ matrix in which, for some $l \in \{1,\ldots,\min\{d,n\}\}$,  $D(z)$ is a diagonal $l\times l$ matrix polynomial with monic diagonal entries:  $D(z) = \diag \bigl(b_1(z),\ldots,b_l(z)\bigr)$ such that $b_i(z)$ is divisible by $b_{i+1}(z)$, $i \in \{1,\ldots,l-1\}$. Notice that $\rank B(\alpha) = l$ if and only if $b_1(\alpha) \neq 0$. If for some $z\in \nC$ the rank of  $B(z)$ is $d$ ($n$, respectively), then $l = d$ ($l = n$) and the zero block row (column) in the matrix in the middle of the right hand side in \eqref{eqsnf} is not present.

\begin{remark} \label{relD}
The matrix in the middle of the right hand side in \eqref{eqsnf} is uniquely determined by $B(z)$. In this paper $B(z)$ often is a matrix polynomial  whose columns form a basis of a subspace $\fB$ of $\nC^d[z]$. Then for any $d \times n$ matrix polynomial $B_1(z)$ whose columns also form a basis of $\fB$, the middle term of its Smith normal form is identical to that of $B(z)$. Thus, the number $l$ and the monic polynomials $b_j(z), j \in\{1,\ldots,l\}$,  above are uniquely determined by the subspace $\fB$ of $\nC^d[z]$.
\end{remark}

\subsection{}
Let ${\mathbf S}(z)$ be a $d\times 2d$ polynomial matrix. For $j\in \{1,\ldots,d\}$ let $\sigma_j$ be the degree of the $j$-th row of ${\mathbf S}(z)$. By definition, a degree of a row is the degree of its transpose. Define $\mathbf S_\infty$, the {\em internal degree} and the {\em external degree} of $\mathbf S(z)$ by:
\begin{align*}
{\mathbf S}_\infty &= \lim_{z\to \infty} \text{\small $\begin{bmatrix}
 z^{-\sigma_1} & \cdots & 0 \\
 \vdots & \ddots & \vdots  \\
  0 & \cdots & z^{-\sigma_d} \end{bmatrix}$ } {\mathbf S}(z),\\
\extdeg {\mathbf S}(z)& = \sigma_1 + \cdots + \sigma_d,\quad \text{and} \\[2mm]
\intdeg {\mathbf S}(z) &=\max \{\, \deg m(z)
\,:\, m(z)\ \text{is a}\ d\times d\ \text{minor of}\ \mathbf S(z)\}.
\end{align*}
For a proof of the following theorem we refer to \cite{McE}.
\begin{theorem} \label{tforney}
Let ${\mathbf P}(z)$ be a $d\times 2d$ matrix polynomial with ${\rank} \mathbf P(z)=d$ for all $z \in \mathbb C$. Let ${\mathbf S}(z)$ be a matrix polynomial in the family
\begin{equation} \label{eqfam}
\bigl\{{U}(z) {\mathbf P}(z)\,:\, \ {U}(z) \ \ \text{unimodular} \bigr\}.
\end{equation}
The following statements are equivalent:
\begin{enumerate}[{\rm (a)}]
\item \label{irr1}
\ $\extdeg {\mathbf S}(z) = \min \bigl\{ \extdeg U(z){\mathbf P}(z) \, : \, U(z) \ \text{unimodular} \bigr\}$.
\item \label{irr2}
\ ${\rank}{\mathbf S}_\infty=d$.
\item \label{irr3}
\ $\extdeg {\mathbf S}(z) = \intdeg {\mathbf S}(z)$.
\item \label{irr4}
\ ${\mathbf S}(z^*)^*$  has the ``predictable degree property''$:$ \\
For every \ $u(z) = \bigl[u_1(z) \ \cdots \ u_d(z) \bigr]^\top \in \nC^{d}[z]$ we have
\[
\deg \bigl(  {\mathbf S}(z^*)^* u(z) \bigr) = \max\bigl\{ \sigma_j + \deg u_j(z), \ j \in \{1,\dots,d\} \bigr\}.
\]
\end{enumerate}
\end{theorem}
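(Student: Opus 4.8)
\medskip

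The plan is to make condition~(b) the hub and prove (b)$\,\Leftrightarrow\,$(c), (b)$\,\Leftrightarrow\,$(a) and (b)$\,\Leftrightarrow\,$(d). Everything rests on one elementary remark about leading coefficients. Since $\rank \mathbf P(z)=d$ at (at least) one point, every member $\mathbf S(z)=U(z)\mathbf P(z)$ of the family \eqref{eqfam} has full row rank as a polynomial matrix, so its row degrees $\sigma_1,\dots,\sigma_d$ are honest nonnegative integers and $\mathbf S_\infty$ is exactly the matrix whose $j$-th row is the coefficient of $z^{\sigma_j}$ in the $j$-th row of $\mathbf S(z)$. For a $d$-element column index set $J\subseteq\{1,\dots,2d\}$, let $m_J(z)$ be the corresponding $d\times d$ minor of $\mathbf S(z)$. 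The Leibniz expansion of $m_J(z)$ shows immediately that $\deg m_J(z)\le\sigma_1+\dots+\sigma_d=\extdeg\mathbf S(z)$, and that the coefficient of $z^{\extdeg\mathbf S(z)}$ in $m_J(z)$ is precisely the $J$-minor of $\mathbf S_\infty$. Hence $\intdeg\mathbf S(z)\le\extdeg\mathbf S(z)$ always, with equality if and only if some $d\times d$ minor of $\mathbf S_\infty$ is nonzero, i.e. if and only if $\rank\mathbf S_\infty=d$. This is (b)$\,\Leftrightarrow\,$(c).

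For (b)$\,\Leftrightarrow\,$(a) I would first observe that $\intdeg$ is constant on the family \eqref{eqfam}: left multiplication by a unimodular $U(z)$ multiplies every $d\times d$ minor by the nonzero constant $\det U(z)$, which changes no degree. Combined with the universal inequality, every member $\mathbf T(z)$ of the family has $\extdeg\mathbf T(z)\ge\intdeg\mathbf T(z)=\intdeg\mathbf S(z)$; if (b) holds, the right side equals $\extdeg\mathbf S(z)$ by (b)$\,\Rightarrow\,$(c), so $\mathbf S(z)$ has minimal external degree. For the reverse implication I argue by contraposition: if $\rank\mathbf S_\infty<d$, choose $v\in\nC^d\setminus\{0\}$ with $v^*\mathbf S_\infty=0$, pick $j_0$ with $v_{j_0}\ne0$ and $\sigma_{j_0}$ maximal on the support of $v$, normalize $v_{j_0}=1$, and let $T(z)$ be the elementary matrix that is the identity except that its $j_0$-th row carries the entries $\overline{v_j}\,z^{\sigma_{j_0}-\sigma_j}$ (nonnegative exponents by the choice of $j_0$, and $0$ where $v_j=0$). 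Then $\det T(z)=1$, so $T(z)\mathbf S(z)$ lies in the family; it agrees with $\mathbf S(z)$ in all rows but the $j_0$-th, and the coefficient of $z^{\sigma_{j_0}}$ in the new $j_0$-th row is $v^*\mathbf S_\infty=0$, so that row's degree drops strictly below $\sigma_{j_0}$. Thus $\extdeg\bigl(T(z)\mathbf S(z)\bigr)<\extdeg\mathbf S(z)$, contradicting (a); this gives (a)$\,\Rightarrow\,$(b).

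For (b)$\,\Leftrightarrow\,$(d) I would read off that the $j$-th column of $\mathbf S(z^*)^*$ is a polynomial vector of degree $\sigma_j$ whose leading coefficient is the $j$-th column of $\mathbf S_\infty^*$. For $u(z)=[u_1(z)\ \cdots\ u_d(z)]^\top\not\equiv0$, the product $\mathbf S(z^*)^*u(z)$ has degree at most $\delta:=\max_j\bigl(\sigma_j+\deg u_j(z)\bigr)$, and its coefficient of $z^\delta$ equals $\mathbf S_\infty^*\,w$, where $w\in\nC^d\setminus\{0\}$ has $j$-th component the leading coefficient of $u_j(z)$ when $\sigma_j+\deg u_j(z)=\delta$ and $0$ otherwise. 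If $\rank\mathbf S_\infty=d$ then $\mathbf S_\infty^*$ is injective, so $\mathbf S_\infty^*w\ne0$ and the degree is exactly $\delta$: this is (b)$\,\Rightarrow\,$(d). Conversely, if $\rank\mathbf S_\infty<d$ pick $v\in\nC^d\setminus\{0\}$ with $v^*\mathbf S_\infty=0$ (equivalently $\mathbf S_\infty^*v=0$), let $\rho:=\max\{\sigma_j:v_j\ne0\}$, and set $u_j(z):=v_j\,z^{\rho-\sigma_j}$ on the support of $v$ and $u_j(z):=0$ elsewhere; then the predicted degree is $\rho$ while the coefficient of $z^\rho$ in $\mathbf S(z^*)^*u(z)$ is $\mathbf S_\infty^*v=0$, so the predictable degree property fails: this is (d)$\,\Rightarrow\,$(b).

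None of these steps is deep, so the main obstacle is really just disciplined bookkeeping: tracking the conjugate transpose in part~(d) carefully enough that it is $\rank\mathbf S_\infty$ ($=\rank\mathbf S_\infty^*$), and not the rank of some unconjugated relative, that controls the cancellations; and keeping the several ``leading coefficient of a product'' computations, the $\deg0=-\infty$ convention, and the zero-support indices of $v$ all consistent. The apparent dangers that a transformed row, or a product $\mathbf S(z^*)^*u(z)$, collapses to the zero polynomial do not arise, since unimodular transformations preserve the full row rank of $\mathbf S(z)$, hence the full column rank of $\mathbf S(z^*)^*$. Once these points are nailed down, the hub structure closes up the four-way equivalence.
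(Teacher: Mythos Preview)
Your argument is correct. The paper does not give its own proof of this theorem; it simply refers the reader to McEliece \cite{McE}. What you have written is a clean, self-contained proof along the classical lines one finds in the polynomial-matrix literature (Kailath, Forney, McEliece): the key identity that the coefficient of $z^{\extdeg\mathbf S(z)}$ in a maximal minor $m_J(z)$ is the corresponding minor of $\mathbf S_\infty$ immediately gives (b)$\Leftrightarrow$(c); the invariance of $\intdeg$ under unimodular left multiplication then yields (b)$\Rightarrow$(a), and your elementary row-operation construction for (a)$\Rightarrow$(b) is the standard degree-reduction step. The predictable-degree equivalence (b)$\Leftrightarrow$(d) is handled correctly once one notes, as you do, that $\mathbf S(z^*)^*=\sum_k S_k^*z^k$ so that the leading column coefficients are the columns of $\mathbf S_\infty^*$, and that $v^*\mathbf S_\infty=0$ is equivalent to $\mathbf S_\infty^*v=0$. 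Your care with the $\deg 0=-\infty$ convention and with ruling out zero rows via the full-row-rank hypothesis closes the only places where the bookkeeping could slip.
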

A matrix polynomial $\mathbf S(z)$ in the family \eqref{eqfam} satisfying the conditions (\ref{irr1})--(\ref{irr4}) is called {\em row reduced}. The multiset $\{\sigma_1, \ldots, \sigma_d\}$ of row degrees for each row reduced matrix in the family  \eqref{eqfam} is the same.  Its elements are called the {\em Forney indices} of any of the matrices in the family \eqref{eqfam}, in particular of $\mathbf P(z)$. We extend this definition to the case where the $d \times 2d$ matrix polynomial $\mathbf P(z)$ has full rank for some $z \in \mathbb C$. For that we use the following
lemma which is a standard tool in system theory, see for
example \cite{F}.
\begin{lemma} \label{tief}
Let $\mathbf P(z)$ be a $d\times 2d$ matrix polynomial with ${\rank}\mathbf P(z)=d$
for some $z \in \mathbb C$. Then $\mathbf P(z)$ admits the factorization:
\begin{equation}\label{eqpgt}
\mathbf P(z) = G(z)\mathbf T(z) \ \ \ \text{for all} \ \ \ z \in \mathbb C,
\end{equation}
where $G(z)$ is a $d\times d$ matrix polynomial with $\det G(z)
\not\equiv 0$ and $\mathbf T(z)$ is a  $d\times 2d$ matrix polynomial with ${\rank}\mathbf T(z)=d$ for all $z \in \mathbb C$. This factorization is essentially
unique, meaning that if also
$
\mathbf P(z) =  G_1(z)\mathbf T _1(z)$ for all $z \in \mathbb C
$,
where $G_1(z)$ and $\mathbf T_1(z)$ have the same properties as $ G(z)$
and $\mathbf T(z)$, then for some unimodular $d\times d$ matrix polynomial
$E(z)$:
$
G_1(z) = G(z) E(z)^{-1}$ and $\mathbf T_1(z) =  E(z) \mathbf T(z)$, $z \in \mathbb C$.
\end{lemma}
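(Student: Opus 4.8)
The plan is to reduce the statement to the Smith normal form \eqref{eqsnf} applied to $\mathbf P(z)$, viewed as a $d \times 2d$ matrix polynomial. First I would write $\mathbf P(z) = U(z)\left[\begin{smallmatrix} D(z) & 0 \end{smallmatrix}\right]V(z)$ with $U(z)$ a $d \times d$ unimodular matrix polynomial, $V(z)$ a $2d \times 2d$ unimodular matrix polynomial, and $D(z) = \diag(b_1(z),\ldots,b_d(z))$ with $b_i(z)$ monic and $b_{i+1}(z)\mid b_i(z)$; here the hypothesis $\rank\mathbf P(z) = d$ for some $z \in \nC$ guarantees (via the remark after \eqref{eqsnf}) that $l = d$, so there is no zero block row, and also $b_1(z)\not\equiv 0$, hence $\det D(z) = b_1(z)\cdots b_d(z) \not\equiv 0$. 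Now set $G(z) = U(z)D(z)$, a $d\times d$ matrix polynomial with $\det G(z) = (\det U(z))\,b_1(z)\cdots b_d(z) \not\equiv 0$, and $\mathbf T(z) = \left[\begin{smallmatrix} I_d & 0\end{smallmatrix}\right]V(z)$, a $d \times 2d$ matrix polynomial. Since $V(z)$ is unimodular its rows are linearly independent over $\nC$ for every $z$, so the first $d$ rows $\mathbf T(z)$ have $\rank\mathbf T(z) = d$ for all $z \in \nC$. A direct computation gives $G(z)\mathbf T(z) = U(z)D(z)\left[\begin{smallmatrix} I_d & 0\end{smallmatrix}\right]V(z) = U(z)\left[\begin{smallmatrix} D(z) & 0 \end{smallmatrix}\right]V(z) = \mathbf P(z)$, which is \eqref{eqpgt}.

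For the essential uniqueness, suppose $\mathbf P(z) = G_1(z)\mathbf T_1(z)$ is a second such factorization. Since $\rank\mathbf T(z) = d$ for all $z$, the matrix polynomial $\mathbf T(z)$ has a polynomial right inverse (obtainable, for instance, from its own Smith normal form, in which $D$ is now the identity and the zero column block can be inverted by a unimodular matrix): there is a $2d \times d$ matrix polynomial $R(z)$ with $\mathbf T(z)R(z) = I_d$; likewise $\mathbf T_1(z)R_1(z) = I_d$ for some polynomial $R_1(z)$. From $G(z)\mathbf T(z) = G_1(z)\mathbf T_1(z)$, right-multiplying by $R_1(z)$ gives $G(z)\mathbf T(z)R_1(z) = G_1(z)$, and right-multiplying by $R(z)$ gives $G(z) = G_1(z)\mathbf T_1(z)R(z)$. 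Set $E(z) = \mathbf T_1(z)R(z)$ and $F(z) = \mathbf T(z)R_1(z)$, both $d \times d$ matrix polynomials; then $G_1(z) = G(z)F(z)$ and $G(z) = G_1(z)E(z)$, whence $G(z) = G(z)F(z)E(z)$. Because $\det G(z)\not\equiv 0$, we may cancel over the field $\nC(z)$ of rational functions to get $F(z)E(z) = I_d$, and symmetrically $E(z)F(z) = I_d$; thus $E(z)$ is a $d\times d$ matrix polynomial with polynomial inverse, i.e.\ unimodular, and $G_1(z) = G(z)E(z)^{-1}$. Finally $\mathbf T_1(z) = E(z)\mathbf T(z)$ follows by left-multiplying $G(z)\mathbf T(z) = G_1(z)\mathbf T_1(z) = G(z)E(z)^{-1}\mathbf T_1(z)$ by a left inverse of $G(z)$ over $\nC(z)$ and clearing denominators, or more cleanly by noting $E(z)\mathbf T(z) = \mathbf T_1(z)R(z)\mathbf T(z)$ and $\mathbf T_1(z) = \mathbf T_1(z)R(z)\mathbf T(z)R_1(z)\mathbf T_1(z)$-type manipulations; the simplest route is: from $G(z)\mathbf T(z) = G_1(z)\mathbf T_1(z)$ and $G_1(z) = G(z)E(z)^{-1}$ we get $G(z)\mathbf T(z) = G(z)E(z)^{-1}\mathbf T_1(z)$, and cancelling the left factor $G(z)$ (injective as a map of polynomial rows since $\det G(z)\not\equiv 0$) yields $\mathbf T(z) = E(z)^{-1}\mathbf T_1(z)$, i.e.\ $\mathbf T_1(z) = E(z)\mathbf T(z)$.

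The only genuinely delicate point is the existence of the polynomial right inverse $R(z)$ of a $d \times 2d$ matrix polynomial $\mathbf T(z)$ of full rank $d$ everywhere, and the left-cancellability of $G(z)$; both are standard but should be stated explicitly. The first follows from the Smith normal form of $\mathbf T(z)$: full rank for all $z$ forces all the invariant factors to be units, so $\mathbf T(z) = U'(z)\left[\begin{smallmatrix} I_d & 0\end{smallmatrix}\right]V'(z)$ with $U', V'$ unimodular, and then $R(z) = V'(z)^{-1}\left[\begin{smallmatrix} I_d \\ 0\end{smallmatrix}\right]U'(z)^{-1}$ works. The cancellation of $G(z)$ on the left is legitimate because over the field $\nC(z)$ the matrix $G(z)$ is invertible. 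Everything else is routine bookkeeping with unimodular factors, and I expect no further obstacle.
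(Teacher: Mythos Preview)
Your proof is correct and follows essentially the same route as the paper: Smith normal form for existence (with $G(z)=U(z)D(z)$ and $\mathbf T(z)=[I_d\ 0]V(z)$), and polynomial right inverses of $\mathbf T$ and $\mathbf T_1$ for uniqueness. The only cosmetic difference is that you cancel $G(z)$ by working over the field $\nC(z)$, whereas the paper phrases the same step as ``$E(z)F(z)=I_d$ for all but finitely many $z$, hence by continuity for all $z$''; these are equivalent, and your version is arguably cleaner.
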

The Forney indices of $\mathbf P(z)$ in the lemma
are by definition the Forney indices of the matrix
polynomial $\mathbf T(z)$ in the factorization \eqref{eqpgt}. By the second part of the lemma, this definition is independent of the choice of the matrix $G(z)$ in this factorization.

\medskip

For convenience of the reader we give a proof
of Lemma~\ref{tief} based on the Smith normal form of a matrix polynomial.
\begin{proof}[Proof of Lemma~{\rm~\ref{tief}}]
Let $\mathbf P(z)$ have the Smith normal form \eqref{eqsnf}. The assumptions imply that $l=d$ and that the matrix in the middle of \eqref{eqsnf} is equal to $\begin{bmatrix} D(z) & 0 \end{bmatrix}$.
Set $G(z) =
U(z)D(z)$ and $\mathbf T(z) =
\begin{bmatrix} I_{d} & 0 \end{bmatrix} \,  V(z)$. Then the factorization \eqref{eqpgt} holds and $G(z)$ and $\mathbf T(z)$ have the properties mentioned in the lemma.
To prove uniqueness we use the fact that, since $\mathbf T(z)$ and
$\mathbf T_1(z)$ have full rank for all $z\in\mathbb C$, they have right
inverses, see \cite{KTh}. These are $2d\times d$ matrix polynomials
$\mathbf S(z)$ and $\mathbf S_1(z)$ such that $\mathbf T(z) \mathbf S(z) = I_d$ and $\mathbf T_1(z)
\mathbf S_1(z) = I_d$ for all $z\in\mathbb C$. Define the matrix polynomials
$E(z) = \mathbf T_1(z) \mathbf S(z)$ and $ F(z) = \mathbf T(z) \mathbf S_1(z)$. Then the
equality $G(z)\mathbf T(z) =  G_1(z)\mathbf T_1(z)$ implies $E(z) =
G_1(z)^{-1}G(z)$ and $F(z) = G(z)^{-1}G_1(z)$, hence
$ E(z) F(z) = I_d$ for all but finitely many $z \in \mathbb C$. By
continuity the last equality holds for all $z \in \mathbb C$, hence
$E(z)$ is unimodular and has the stated properties.
\end{proof}

\medskip

\subsection{} The next two lemmas concern finite dimensional Pontryagin spaces. By the {\em positive {\rm (}negative{\rm )} index} of a Pontryagin space $\cK$ we mean the dimension of a maximal positive (negative) subspace of $\cK$; evidently, the dimension of $\cK$ is equal to the sum of the indices.
\begin{lemma} \label{lfdPs}
Let $\cK$ be a Pontryagin space with positive and negative index equal to $n$. Let $\cL$ be a subspace of $\cK$ with $\dim \cL = 2n - \tau$.  If $\cL$ contains a maximal neutral subspace of $\cK$, then $\cL^{\perp}$ is the isotropic part of $\mathcal L$ and  $\cL\text{\Large $/$}\!\cL^{\perp}$ is a Pontryagin space with positive and negative index equal to $n-\tau$.
\end{lemma}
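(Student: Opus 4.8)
The plan is to exploit that a maximal neutral subspace of $\cK$ is automatically hypermaximal, and then transfer the index count to a quotient. Write $\dim\cK = 2n$ and fix a maximal neutral subspace $\cN$ of $\cK$ with $\cN\subseteq\cL$. Since the positive and negative indices of $\cK$ are both $n$, a maximal neutral subspace has dimension equal to $\min\{n,n\}=n$, so $\dim\cN = n$; as $\cN$ is neutral, $\cN\subseteq\cN^{\perp}$, while nondegeneracy of $\cK$ gives $\dim\cN^{\perp} = 2n-n = n$, whence $\cN = \cN^{\perp}$.

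First I would dispose of the claim about $\cL^{\perp}$. From $\cN\subseteq\cL$ we get $\cL^{\perp}\subseteq\cN^{\perp} = \cN\subseteq\cL$, so $\cL^{\perp} = \cL\cap\cL^{\perp}$ is exactly the isotropic part of $\cL$; and nondegeneracy of $\cK$ gives $\dim\cL^{\perp} = 2n - \dim\cL = \tau$.

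Next, equip the quotient $\cL\text{\Large $/$}\!\cL^{\perp}$ with the form induced by that of $\cK$; this is well defined and nondegenerate precisely because $\cL^{\perp}$ is the isotropic part of $\cL$, and since it is finite dimensional it is a Pontryagin space, of dimension $\dim\cL - \dim\cL^{\perp} = (2n-\tau)-\tau = 2(n-\tau)$. It therefore remains only to show that its positive and negative indices coincide, equivalently that a maximal neutral subspace of $\cL\text{\Large $/$}\!\cL^{\perp}$ has dimension $n-\tau$. I claim $\cN\text{\Large $/$}\!\cL^{\perp}$ (note $\cL^{\perp}\subseteq\cN$) is one such subspace: it is neutral because $\cN$ is, and it has dimension $n-\tau$; for maximality, if $\cM\text{\Large $/$}\!\cL^{\perp}$ is a neutral subspace of the quotient containing it, with $\cN\subseteq\cM\subseteq\cL$, then $\cM$ is neutral in $\cK$, so $\cM = \cN$ by maximality of $\cN$ in $\cK$. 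Hence the indices $\kappa_{\pm}$ of $\cL\text{\Large $/$}\!\cL^{\perp}$ satisfy $\min\{\kappa_{+},\kappa_{-}\} = n-\tau$ and $\kappa_{+} + \kappa_{-} = 2(n-\tau)$, forcing $\kappa_{+} = \kappa_{-} = n-\tau$.

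The only subtle point is the maximality of $\cN\text{\Large $/$}\!\cL^{\perp}$ inside the quotient, which rests on the observation that a subspace of $\cL$ containing $\cL^{\perp}$ whose image in $\cL\text{\Large $/$}\!\cL^{\perp}$ is neutral is already neutral in $\cK$. The remaining ingredients are routine dimension counts together with the standard fact that a maximal neutral subspace of a finite dimensional Pontryagin space has dimension equal to the smaller of its two indices.
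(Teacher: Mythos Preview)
Your proof is correct and takes a genuinely different route from the paper's. The paper argues via a pseudo-fundamental decomposition $\cL = \cL^{\perp} + \cL_- + \cL_+$: since $\cN$ is neutral it meets each definite summand $\cL_{\pm}$ trivially, giving $n = \dim\cN \leq \tau + \dim\cL_{\pm}$; combined with $\dim\cL_{-} + \dim\cL_{+} = 2(n-\tau)$ this forces $\dim\cL_{\pm} = n-\tau$. You instead work directly in the quotient $\cL/\cL^{\perp}$, showing that $\cN/\cL^{\perp}$ is a maximal neutral subspace there and then invoking the standard fact that the maximal neutral dimension equals the smaller index. Your argument is a bit more conceptual and avoids choosing a decomposition, at the cost of appealing to the lifting observation that a neutral subspace of the quotient containing $\cL^{\perp}$ lifts to a neutral subspace of $\cK$; the paper's argument is more hands-on and self-contained. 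Both are short and essentially equivalent in depth.
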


\begin{proof}
Let $\cN$ be a maximal neutral subspace contained in $\cL$. Since $\cN^{\perp} = \cN$, the inclusion $\cN \subseteq \cL$, yields $\cL^{\perp} \subset \cN \subset \cL$. Therefore, $\cL^{\perp}$ is the isotropic part of $\mathcal L$ and $\dim \cL^{\perp} = \tau$.  Let
$\cL = \cL^{\perp} + \cL_- + \cL_+ $
be a pseudo-fundamental decomposition of $\cL$. Since $\cN$ is a
neutral subspace of $\cL$, we have $n = \dim \cN \leq \tau + \dim
\fL_{\pm}$. Therefore
\begin{equation*}
2n-\tau  = \dim \cL \\
  = \tau + \dim \cL_- + \dim \cL_+ \\
   \geq \tau + n - \tau + n - \tau \\
   = 2n-\tau.
\end{equation*}
This proves that $\dim \cL_- = \dim \cL_+ = n-\tau$.
\end{proof}

Recall that a symmetric relation $S$ in a Pontryagin space $\mathcal K$ is {\it simple} if $S$ has no non-real eigenvalues and
$\mathcal K=\overline{\ospan}\{ {\ker}(S^*-z)\,:\, z \in \mathbb C\setminus \mathbb R\}$.
Below $\mul S^*$ stands for the multi-valued part of the adjoint $S^*$ of $S$: $\mul S^*=\{ g \in \mathcal K \,:\, \{0,g\} \in S^*\}.$
\begin{lemma} \label{lsr}
Let $S$ be a simple symmetric relation in a finite dimensional
Pontryagin space of dimension $n$. Then the spaces $\mul S^*, \ker
S^*$, and $S^*\cap zI, z \in \nC$, have the same dimension $d'$, say.
In particular, the defect numbers of $S$ are both equal to $d'$.
Furthermore, $\dim {\ran} S=\dim S =\dim {\dom}S =  n-d'$ and $\dim S^* = n + d'$.
\end{lemma}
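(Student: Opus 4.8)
The plan is to reduce the whole lemma to a handful of elementary identities for the adjoint of a linear relation, invoking simplicity only to show that $S$ is an injective operator with no eigenvalues whatsoever. Throughout, $\cK$ being a finite-dimensional Pontryagin space means it is nondegenerate, so $\dim\cM^{\perp}=n-\dim\cM$ for every subspace $\cM$ of $\cK$; likewise $\cK\times\cK$ carries the nondegenerate sesquilinear form $(\{f,f'\},\{g,g'\})\mapsto[f',g]_\cK-[f,g']_\cK$, for which $S^*$ is precisely the orthogonal companion of $S$, so that $\dim S+\dim S^*=\dim(\cK\times\cK)=2n$ for every relation $S$. Unwinding the definition of the adjoint also gives the standard identities $\mul S^*=(\dom S)^{\perp}$ and $\ker S^*=(\ran S)^{\perp}$, hence $\dim\mul S^*=n-\dim\dom S$ and $\dim\ker S^*=n-\dim\ran S$; applying the latter to $S-z$ in place of $S$ and using $(S-z)^*=S^*-z^*$ yields
\[
\dim\ker(S^*-z)=n-\dim\ran(S-z^*),\qquad z\in\nC.
\]

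Next I would use simplicity to rule out eigenvalues and a nontrivial multivalued part. By hypothesis $S$ has no non-real eigenvalues. If some $\lambda\in\nR$ were an eigenvalue, with $\{f,\lambda f\}\in S$ and $f\neq0$, then pairing this with an arbitrary $\{h,zh\}\in S^*$, $z\in\nC\setminus\nR$, through the defining identity of the adjoint gives $(\lambda-z^*)[f,h]_\cK=0$, and since $\lambda\in\nR$ while $z\notin\nR$ we get $[f,h]_\cK=0$; thus $f\perp\ker(S^*-z)$ for every non-real $z$, so $f\perp\cK$ by simplicity, so $f=0$, a contradiction. Hence $S$ has no eigenvalues, and in particular $\ker(S-z)=\{0\}$ for all $z\in\nC$. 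Likewise, if $\{0,g\}\in S$, then pairing with $\{h,zh\}\in S^*$ gives $[g,h]_\cK=0$ for all $h\in\ker(S^*-z)$ and all non-real $z$, so $g\perp\cK$ and $g=0$; that is, $\mul S=\{0\}$ and $S$ is an operator.

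It then remains to assemble. Since $S$ is an operator, $\dim\dom S=\dim S$, and since $S$ is injective, $\dim\ran S=\dim S$; set $d':=n-\dim S$, so $\dim S=\dim\dom S=\dim\ran S=n-d'$ and $\dim S^*=2n-\dim S=n+d'$. The identities of the first paragraph give $\dim\mul S^*=n-\dim\dom S=d'$ and $\dim\ker S^*=n-\dim\ran S=d'$. For arbitrary $z\in\nC$, the relation $S-z^*$ is again an injective operator with the same domain as $S$, so $\dim\ran(S-z^*)=n-d'$ and therefore $\dim\ker(S^*-z)=d'$; moreover $S^*\cap zI$ is the graph of the restriction of $S^*$ to $\ker(S^*-z)$ and so also has dimension $d'$. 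Finally, the defect numbers of $S$ are the numbers $\dim\ker(S^*-z)$ with $z$ in the open upper, respectively lower, half-plane, so both equal $d'$.

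The only step with any real content is the use of simplicity, namely extracting from the spanning-closure definition that $S$ has no \emph{real} eigenvalues and that $\mul S=\{0\}$; everything else is bookkeeping with orthogonal companions. I would take care that $z^*$ denotes complex conjugation and that the dimension counts $\dim\cM^{\perp}=n-\dim\cM$ (in $\cK$ and, analogously, for the orthogonal companion in $\cK\times\cK$) are exactly the finite-dimensional Pontryagin-space nondegeneracy fact being used.
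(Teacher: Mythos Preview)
Your proof is correct and follows essentially the same approach as the paper: both arguments reduce the statement to the orthogonal-companion identities $(\dom S)^\perp=\mul S^*$, $(\ran(S-z^*))^\perp=\ker(S^*-z)$, and $\dim S+\dim S^*=2n$, together with the fact that a simple symmetric relation is an injective operator with no eigenvalues. The only difference is that the paper cites \cite[Proposition~2.4]{acdJFA} for this last fact, whereas you derive it directly from the spanning condition in the definition of simplicity; your direct argument is a nice self-contained substitute for that citation.
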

\begin{proof}
First notice that by \cite[Proposition~2.4]{acdJFA} $S$ is an operator
and $S$ has no eigenvalues. The following statements are equivalent:
\begin{enumerate}[(a)]
\item  \label{eqa}
\ $\dim\bigl(\mul S^*\bigr) = d'$.
\item \label{eqb}
\ $\codim\bigl(\dom S\bigr) = d'$.
\item  \label{eqc}
\ $\codim\bigl(\ran(S-z^*)\bigr) = d'$ for all $z\in \nC$.
\item \label{eqd}
\ $\dim\bigl(S^*\cap zI\bigr) =d'$ for all $z\in \nC$.
\end{enumerate}
The relation $(\dom S)^{\perp} = \mul S^*$ implies the equivalence
(\ref{eqa})$\Leftrightarrow$(\ref{eqb}). The equivalence
(\ref{eqb})$\Leftrightarrow$(\ref{eqc}) follows from the fact that
$S-z^*$ is one-to-one. By taking the orthogonal complements we obtain
the equivalence (\ref{eqc})$\Leftrightarrow$(\ref{eqd}). Notice that
(\ref{eqd}) with $z = 0$ implies that $d' = \dim\bigl(\ker
S^*\bigr)$. The equalities  $ n-d' =\dim {\dom} S=\dim S= \dim {\ran} S$  follow from (\ref{eqb}) and
the fact that $S$ is an injective operator.
Since $\dim S^* = 2n - \dim S$ the
last equality follows.
\end{proof}

\subsection{}
A $d\times d$ matrix function $K(z,w)$ will be called a {\em polynomial Hermitian kernel} if it is a polynomial of two variables $z$ and $w^*$ and
$K(z,w)^* = K(w,z)$, $z, w \in \nC$.
This implies that the degree of $K(z,w)$ as a polynomial in $z$ equals the  degree of $K(z,w)$ as a polynomial in $w^*$. If we denote this common degree by $p-1$, then $K(z,w)$ can be expanded as
\begin{equation} \label{eqKAs}
K(z,w) = \sum_{j=0}^{p-1} \sum_{k=0}^{p-1} A_{jk} z^j w^{*k},  \qquad z,w \in \nC,
\end{equation}
where $A_{jk}, j,k \in \{0,\ldots,p-1\}$, are $d\times d$ matrices. Since $K(z,w)$ is a Hermitian kernel, the $dp \times dp$ block matrix
\begin{equation}\label{eqA}
A = \begin{bmatrix}
A_{00} & \cdots & A_{0,p-1} \\
\vdots & \ddots & \vdots \\
A_{p-1,0} & \cdots & A_{p-1,p-1} \\
\end{bmatrix}
\end{equation}
is self-adjoint. It also follows that the number of negative squares of $K(z,w)$ equals the number of negative eigenvalues of $A$ and the number of positive squares of $K(z,w)$ equals the number of positive eigenvalues of $A$.  The dimension of the reproducing kernel space corresponding to $K(z,w)$ is the rank of $A$. These observations are used in the proof of the following lemma.
\begin{lemma} \label{ldouble}
Let $K(z,w)$ be a $d\times d$ matrix polynomial Hermitian kernel of degree $p-1$. For $q \in \nN$ set
\[
L_q(z,w) = \iu\, (z^{q} - w^{*q}) K(z,w), \qquad z,w \in \nC.
\]
If $q \geq p$, then the positive and the negative index of the reproducing kernel Pontryagin space with kernel $L_q(z,w)$ are equal and coincide with the dimension of the reproducing kernel Pontryagin space with kernel $K(z,w)$.
\end{lemma}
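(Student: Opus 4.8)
The plan is to read off everything from the block matrix $A$ associated with $K(z,w)$ via the expansion \eqref{eqKAs}. Write $K(z,w)=\sum_{j,k=0}^{p-1}A_{jk}z^jw^{*k}$ with $A=[A_{jk}]$ self-adjoint of size $dp\times dp$, and recall from the discussion preceding the lemma that the number of negative (positive) squares of $K$ equals the number of negative (positive) eigenvalues of $A$, while the dimension of the reproducing kernel space of $K$ equals $\rank A$. So I must produce the analogous block matrix $B$ for $L_q(z,w)=\iu(z^q-w^{*q})K(z,w)$ and show that its positive and negative indices are both equal to $\rank A$.

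First I would compute $B$ explicitly. Since $L_q(z,w)=\iu z^qK(z,w)-\iu w^{*q}K(z,w)$, its expansion in powers of $z$ and $w^*$ runs over degrees $0,\dots,p+q-1$ in each variable, and the coefficient of $z^{j'}w^{*k'}$ is $\iu A_{j'-q,\,k'}-\iu A_{j',\,k'-q}$ (with the convention that $A_{jk}=0$ when an index falls outside $\{0,\dots,p-1\}$). The key structural observation, and the crux of the proof, is that because $q\ge p$ the two families of nonzero terms do not overlap: $\iu A_{j'-q,k'}$ is nonzero only when $q\le j'\le p+q-1$, i.e. $j'-q\in\{0,\dots,p-1\}$, whereas $\iu A_{j',k'-q}$ is nonzero only when $0\le j'\le p-1$; and $q\ge p$ forces these $j'$-ranges to be disjoint. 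Hence, after grouping the index blocks into the "low" range $\{0,\dots,p-1\}$ and the "high" range $\{q,\dots,q+p-1\}$ (and a possibly empty middle range $\{p,\dots,q-1\}$ contributing only zero blocks), the matrix $B$ has, up to a permutation of block rows and columns, the $2\times 2$ block form
\[
B \;\sim\; \begin{bmatrix} 0 & -\iu A \\[2pt] \iu A & 0 \end{bmatrix},
\]
with the middle zero block deleted since it contributes nothing to rank or signature.

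It then remains to determine the inertia of $\left[\begin{smallmatrix}0 & -\iu A\\ \iu A & 0\end{smallmatrix}\right]$. Conjugating by $\tfrac{1}{\sqrt2}\left[\begin{smallmatrix} I & I\\ \iu I & -\iu I\end{smallmatrix}\right]$ (a unitary) turns this matrix into $\left[\begin{smallmatrix} A & 0\\ 0 & -A\end{smallmatrix}\right]$; alternatively one notes directly that the nonzero eigenvalues of $\left[\begin{smallmatrix}0 & -\iu A\\ \iu A & 0\end{smallmatrix}\right]$ come in pairs $\pm\lambda$ for each nonzero eigenvalue $\lambda$ of $A$. Either way, the positive and negative indices of $B$ both equal $\rank A$, which by the remarks before the lemma equals the dimension of the reproducing kernel Pontryagin space with kernel $K(z,w)$. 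Finally I should check that $L_q(z,w)$ is genuinely a polynomial Hermitian kernel of the relevant degree, i.e. that $L_q(z,w)^*=L_q(w,z)$: this is immediate since $K(z,w)^*=K(w,z)$ and $\bigl(\iu(z^q-w^{*q})\bigr)^*=\overline{\iu}(z^{*q}-w^q)=\iu(z^q-w^{*q})$ after the variable swap. I expect the only real subtlety to be bookkeeping the index ranges carefully so that the disjointness $q\ge p$ is used correctly and no block is double-counted; the inertia computation for the off-diagonal $2\times2$ block is routine linear algebra.
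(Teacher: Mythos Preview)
Your proposal is correct and follows essentially the same route as the paper: expand $L_q$ to see that its coefficient matrix has the block form $\left[\begin{smallmatrix}0&0&-\iu A\\0&0&0\\\iu A&0&0\end{smallmatrix}\right]$ (using $q\ge p$ for the disjointness of the two contributions), then unitarily conjugate to $\left[\begin{smallmatrix}A&0&0\\0&0&0\\0&0&-A\end{smallmatrix}\right]$. The only cosmetic difference is that you discard the zero middle block and work with a $2\times2$ block unitary, whereas the paper keeps the full $3\times3$ block structure; the inertia argument is the same.
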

\begin{proof} Write $K(z,w)$ in the form \eqref{eqKAs} and denote by $A$ the matrix \eqref{eqA}.
We calculate the coefficients of the matrix polynomial $L_q(z,w)$ for $q \geq p$:
{\allowdisplaybreaks \begin{align*}
L_q(z,w) & = \iu z^{q} \sum_{j=0}^{p-1} \sum_{k=0}^{p-1} A_{jk} z^j w^{*k} - \iu w^{*q} \sum_{j=0}^{p-1} \sum_{k=0}^{p-1} A_{jk} z^j w^{*k} \\
& = \sum_{j=0}^{p-1} \sum_{k=0}^{p-1} \iu A_{jk} z^{q+j} w^{*k} + \sum_{j=0}^{p-1} \sum_{k=0}^{p-1} (-\iu) A_{jk} z^j w^{*(q+k)} \\
& = \sum_{j=0}^{q+p-1} \sum_{k=0}^{p-1} \iu A_{(j-q)k} z^{j} w^{*k} + \sum_{j=0}^{p-1} \sum_{k=0}^{q+p-1} (-\iu) A_{j(k-q)} z^j w^{*k} \\
& = \sum_{j=0}^{q+p-1} \sum_{k=0}^{q+p-1} \bigl(\iu A_{(j-q)k} -\iu A_{j(k-q)} \bigr) z^{j} w^{*k},
\end{align*}}
\!where we set $A_{jk} = 0$ whenever $j < 0$ or $k < 0$ or $j > p-1$ or $k > p-1$. In other words, the  $2d(p+q)\times 2d(p+q)$ self-adjoint matrix formed by the coefficients of $L_q(z,w)$ is given by
\begin{equation*} \label{eqantdi}
B = \begin{bmatrix}
0 & 0 & - \iu A \\
0 & 0 & 0 \\
\iu A & 0 & 0
\end{bmatrix}
\end{equation*}
where the $0$ in the center is a $d(q-p) \times d(q-p)$ matrix.
With
\[
E = \frac{1}{\sqrt{2}} \begin{bmatrix}
I_{dp} & 0 &  i I_{dp} \\ 0 & I_{d(q-p)} & 0 \\ i I_{dp} & 0 & I_{dp}
\end{bmatrix}
\]
we have $EE^* = I_{d(q+p)}$ and
\[
E^* B E = E^* \begin{bmatrix}
0 & 0 & - \iu A \\ 0 & 0 & 0  \\ \iu A & 0 & 0
\end{bmatrix}
E =
\begin{bmatrix}
A & 0 & 0  \\ 0 & 0 & 0 \\ 0 & 0 & -A
\end{bmatrix}.
\]
Therefore the rank of $B$ is twice the rank of $A$. Moreover, $B$ has equal numbers of positive and negative eigenvalues. Since the positive and negative index of the reproducing kernel Pontryagin space with kernel $L_q(z,w)$ coincide with the number of positive and negative eigenvalues of $B$ the lemma is proved.
\end{proof}

A polynomial reproducing Nevanlinna kernel introduced in the Introduction is a polynomial Hermitian kernel. Since in the proof of Theorem~\ref{tchpnk} the polynomials in a Nevanlinna pair never appear separate we adopt the following equivalent definition of a polynomial Nevanlinna kernel:
A $d\times d$ matrix function $K(z,w)$ is called a {\em polynomial  Nevanlinna kernel} if it can be represented as
\begin{equation} \label{eqpnkg}
{\mathbf P}(z) \V{Q}^{-1} {\mathbf P}(w)^* = \iu \, (z-w^{*}) K(z,w)  \quad \text{for all} \quad z, w \in \nC,
\end{equation}
where $\V{Q}$  is a $2d \times 2d$ self-adjoint matrix with $d$ positive and $d$ negative eigenvalues and $\mathbf P(z)$ is
a $d \times 2d$ matrix polynomial such that  $\mathbf P(z)$ has rank $d$ for some $z \in \nC$.
With
\begin{equation} \label{eqpnkgsQ}
\V{Q} = \V{Q}_1 := \begin{bmatrix} 0 & -\iu I_{_d} \\[5pt]  \iu I_{_d} & 0
\end{bmatrix} \quad \text{and} \quad {\mathbf P}(z) = \bigl[M(z) \ \ N(z)\bigr]
\end{equation}
the definition in the Introduction is obtained from the new one. The assumptions on $\V{Q}$ imply that there exists a constant invertible matrix $T$ such that $\V{Q} = T \V{Q}_1 T^*$. Now, if we write ${\mathbf P}(z) T = \bigl[M(z) \ \ N(z)\bigr]$, we have $K(z,w) = K_{M,N}(z,w)$.
Since ${\mathbf P}(z)$ is a polynomial, the condition that $\rank \mathbf P(z) = d$ for some $z \in \nC$ implies that $\rank {\mathbf P}(z) = d$ for all but finitely many $z \in \nC$. A polynomial Nevanlinna kernel will be called a {\em full} Nevanlinna kernel if ${\mathbf P}(z)$ can be chosen such that $\rank {\mathbf P}(z) = d$ for all $z\in \nC$.

\section{Proof of Theorem~{\rm~\ref{tchucsg}}} \label{s3}
\subsection{}
Let $\mf B$ be a vector subspace of $\nC^d[z]$. By $S_{\fB}$ we denote the range restriction of $\wh{S}$ to $\mf B$, that is,
$$
\dom S_{\fB}  =  \fB \cap \wh{S}^{-1}\mf{B}, \qquad
\bigl(S_{\mf B}f\bigr)(z)  = z f(z), \ f \in \dom S_{\fB}.
$$
In graph notation this means:
\[
S_\fB = \bigl\{ \{f,g\} \,:\, f, g \in \fB, \, g(z) = zf(z) \ \text{for all} \ z \in \nC\bigr\}.
\]
By \eqref{eqrhS=khE}, for $\alpha \in \nC$ we have
\begin{equation} \label{eqrannul}
\ran \bigl(S_{\fB} - \alpha\bigr) = \bigl(\wh{S} - \alpha\bigr) \bigl(\fB \cap \wh{S}^{-1}\mf{B}\bigr) \subseteq \ran \bigl(\wh{S} - \alpha\bigr) \cap \fB = \fB \cap \ker E_\alpha.
\end{equation}
The reverse inclusion is equivalent to the implication
\[
f \in \fB, \ \alpha \in \nC, \ f(\alpha) = 0 \quad \Rightarrow \quad f(z) = (z-\alpha) g(z) \ \ \text{for some} \ \ g \in \dom S_\fB.
\]
In some cases this implication does not hold. For  example, it does not hold for any $\alpha \in \nC$ in the space $\fB \subset \nC^2[z]$ given by
\[
\fB = \left\{\left[\begin{array}{c}
a_0 + a_2 z^2 \\ b_0+b_1 z
\end{array}\right] \, : \, a_0,a_2,b_0,b_1 \in \nC \right\}.
\]
Indeed, $\fB$ contains $\bigl[z^2-\alpha^2 \ \ z-\alpha\bigr]^\top$ which is $0$ at $z = \alpha$, but $\fB$ does not contain $\bigl[z+\alpha \ \ \ 1\bigr]^\top$.
That the implication, or equivalently, equality in \eqref{eqrannul} holds, is characterized in terms of canonical subspaces of $\nC^d[z]$ in Theorem~\ref{tchucsg} in the Introduction. This section is devoted to the proof of this theorem.

Let $B(z)$ be a $d\times n$ matrix polynomial whose columns form a basis for $\fB$, $n = \dim \fB$. Then, as will be shown in the proof of Theorem~\ref{tchucsg}, the sets in \eqref{eqdetrannul} are equal to $\bigl\{\alpha\in \nC \,:\, b_1(\alpha) \neq 0\bigr\}$, where $b_1(z)$ is the scalar polynomial in the Smith normal form \eqref{eqsnf} of $B(z)$.
We will first prove Theorem~\ref{tchucsg} for the case where the sets in \eqref{eqdetrannul} are equal to $\nC$, see Theorem~\ref{tchucs} below.
In this case $W(z)$ is unimodular.  The proof of Theorem~\ref{tchucs} is based on the following three lemmas.

\begin{lemma} \label{lSr}
Let $\mf B$ be a finite dimensional subspace of $\nC^d[z]$ such that
\begin{equation*} 
\ran \bigl(S_{\mf B} - \alpha\bigr) = \mf B \cap \ker E_{\alpha}  \quad
 \text{for all} \quad  \alpha \in \nC.
\end{equation*}
If $\dom S_{\fB} \subseteq \fB' \subseteq \fB$, then
$
\ran \bigl(S_{\fB'} - \alpha\bigr) = \fB'\cap  \ker E_{\alpha}$ for all $\alpha \in \nC$.
\end{lemma}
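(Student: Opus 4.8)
The plan is to observe first that for \emph{any} finite dimensional subspace the inclusion $\ran(S_{\fB'}-\alpha) \subseteq \fB' \cap \ker E_\alpha$ is automatic: it is exactly \eqref{eqrannul} applied with $\fB$ replaced by $\fB'$. So everything reduces to proving the reverse inclusion $\fB' \cap \ker E_\alpha \subseteq \ran(S_{\fB'}-\alpha)$ for each $\alpha \in \nC$.

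To do that I would fix $\alpha$ and take an arbitrary $f \in \fB' \cap \ker E_\alpha$. Since $\fB' \subseteq \fB$, the polynomial $f$ belongs to $\fB \cap \ker E_\alpha$, which by the hypothesis on $\fB$ equals $\ran(S_\fB - \alpha)$. Hence there is $g \in \dom S_\fB$ with $f(z) = (z - \alpha) g(z)$ for all $z \in \nC$. The hypothesis $\dom S_\fB \subseteq \fB'$ immediately gives $g \in \fB'$.

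The only point that needs an argument is that $g$ lies in $\dom S_{\fB'} = \fB' \cap \wh{S}^{-1}\fB'$, i.e. that $\wh{S}g$ --- the polynomial $z\mapsto z g(z)$ --- belongs to $\fB'$ and not merely to $\fB$. For this I would rewrite the factorization as
\[
z g(z) = f(z) + \alpha\, g(z),
\]
whose right-hand side is a linear combination of $f \in \fB'$ and $g \in \fB'$ and therefore lies in the subspace $\fB'$. Thus $g \in \dom S_{\fB'}$ and $(S_{\fB'} - \alpha) g = f$, so $f \in \ran(S_{\fB'} - \alpha)$. As $f \in \fB' \cap \ker E_\alpha$ and $\alpha \in \nC$ were arbitrary, this finishes the proof. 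I do not anticipate a genuine obstacle here; the whole content is that the two-sided containment $\dom S_\fB \subseteq \fB' \subseteq \fB$ both pushes the quotient $g$ into $\fB'$ and, via the identity $z g = f + \alpha g$, keeps $\wh{S} g$ inside $\fB'$.
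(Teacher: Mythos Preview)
Your proof is correct and follows essentially the same route as the paper: both prove the nontrivial inclusion by pulling $f\in\fB'\cap\ker E_\alpha$ back through the hypothesis to get $g\in\dom S_\fB\subseteq\fB'$, then use $\wh{S}g=f+\alpha g\in\fB'$ to conclude $g\in\dom S_{\fB'}$. The paper phrases the last step as ``from $f,g\in\fB'$ we infer $g,Sg\in\fB'$,'' which is exactly your identity $zg(z)=f(z)+\alpha g(z)$.
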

\begin{proof}
Let
 $
f \in \fB'\cap\,  \ker E_{\alpha}$. Then $f \in   \fB  \cap\, \ker E_{\alpha}=\ran \bigl(S_{\mf B} - \alpha\bigr)$, that is, $f=Sg-\alpha g$ for some $g \in {\dom}S_{\mathfrak B} \subseteq \mathfrak B'$. From $f,g \in \mathfrak B'$ we infer $g,Sg \in \mathfrak B'$. Hence
$g \in \dom S_{\fB'}$ and $f = (S_{\fB'}  -\alpha) g$. This proves
 $\fB' \cap \, \ker E_{\alpha}
\subseteq \ran \bigl(S_{\fB'} - \alpha\bigr)$.
Since the reverse inclusion is obvious, the lemma is proved.
\end{proof}

\begin{lemma} \label{lWtcs}
Let $\fB$ be an $n$-dimensional subspace of $\nC^d[z]$. Then
\begin{equation} \label{eqWtcs}
\fB \cap \ker \wh{E}_\alpha = \{0\}  \qquad \text{for all} \quad \alpha \in \nC
\end{equation}
if and only if there exists a unimodular operator $\wh{W}$ such that $\fB = \wh{W} \nC^d_{n}$, where $\nC^d_{n}$ is a top coordinate subspace of $\nC^d$.
\end{lemma}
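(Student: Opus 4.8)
The natural approach, given that the Smith normal form \eqref{eqsnf} has just been recalled, is to encode $\fB$ by a matrix polynomial and read \eqref{eqWtcs} as a pointwise rank condition. First I would fix a $d\times n$ matrix polynomial $B(z)$ whose columns form a basis of $\fB$, so that $\fB=\{B(z)c:c\in\nC^n\}$ and $c\mapsto B(z)c$ is injective. For $f=B(z)c$ the equality $f(\alpha)=0$ is $B(\alpha)c=0$, so \eqref{eqWtcs} holds if and only if $B(\alpha)$ is injective, i.e.\ $\rank B(\alpha)=n$, for every $\alpha\in\nC$; note that this already forces $n\le d$.

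For the forward implication I would put $B(z)$ in its Smith normal form \eqref{eqsnf}. Since $\rank B(z)=n$ for all $z$, the generic rank $l$ equals $n$, the zero block column is absent, and by the characterization ``$\rank B(\alpha)=l$ iff $b_1(\alpha)\neq 0$'' the monic polynomial $b_1(z)$ has no zeros, hence $b_1(z)\equiv 1$; as every $b_j$ is a monic divisor of $b_1$, the middle factor is the $d\times n$ matrix $\begin{bmatrix}I_n\\0\end{bmatrix}$. Thus $B(z)=U(z)\begin{bmatrix}I_n\\0\end{bmatrix}V(z)$ with $U(z),V(z)$ unimodular, and setting
\[
W(z)=U(z)\begin{bmatrix}V(z)&0\\0&I_{d-n}\end{bmatrix}
\]
--- a product of unimodular matrix polynomials, hence unimodular --- one checks directly that $B(z)c=W(z)\bigl[c^{\top}\ 0\bigr]^{\top}$ for every $c\in\nC^n$, whence $\fB=W(z)\,\nC^d_n=\wh W\,\nC^d_n$.

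The converse is the routine direction: if $\fB=\wh W\,\nC^d_n$ with $\wh W$ a unimodular operator, then every $f\in\fB$ has the form $f(z)=W(z)c$ with $c$ a constant vector in $\nC^d_n$; if $f(\alpha)=0$ then $W(\alpha)c=0$, and since $\det W(\alpha)$ is a nonzero constant this forces $c=0$ and $f=0$, giving \eqref{eqWtcs} (and $\dim\fB=\dim\nC^d_n=n$ since $\wh W$ is a bijection of $\nC^d[z]$). I do not expect a genuine obstacle; the only delicate point is the Smith-form bookkeeping, namely that full column rank at every $\alpha$ forces the middle factor to be exactly $\begin{bmatrix}I_n\\0\end{bmatrix}$, together with the small observation that padding the unimodular column-operation matrix $V(z)$ into the block $\begin{bmatrix}V(z)&0\\0&I_{d-n}\end{bmatrix}$ and composing with $U(z)$ turns what would otherwise be a module operation into a genuine unimodular change of generators carrying $\nC^d_n$ onto $\fB$.
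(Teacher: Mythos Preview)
Your proof is correct and follows essentially the same route as the paper's: translate \eqref{eqWtcs} into the pointwise rank condition $\rank B(\alpha)=n$ via \eqref{eqrankrannul}, use the Smith normal form to obtain $B(z)=U(z)\begin{bmatrix}I_n\\0\end{bmatrix}V(z)$, and pad $V(z)$ into the block-diagonal unimodular $W(z)$ exactly as in \eqref{eqextW}. Your justification of why the middle Smith factor is $\begin{bmatrix}I_n\\0\end{bmatrix}$ (via the divisibility $b_j\mid b_1$ and $b_1\equiv 1$) is a bit more explicit than the paper's, but the argument is the same.
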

\begin{proof}
If $n = 0$, the statements are trivial with $W(z) = I_d$. From now on we  assume $n \geq 1$. If $B(z)$ is any $d \times n$ matrix polynomial whose columns form a basis of $\fB$, then, clearly,
\begin{equation} \label{eqrankrannul}
\bigl\{\alpha \in \nC \,:\, \rank B(\alpha) = n \bigr\} = \bigl\{\alpha \in \nC \,:\, \fB \cap \ker \wh{E}_\alpha = \{0\} \bigr\}.
\end{equation}

Assume \eqref{eqWtcs}.  Let $B(z)$ be a $d \times n$ matrix polynomial whose columns form a basis of $\fB$. By \eqref{eqrankrannul}, for all $\alpha \in \nC$ the rank of $B(\alpha)$ is $n$ and $n \leq d$. Hence $B(z)$ admits the Smith normal form (see \eqref{eqsnf}):
$B(z) = U(z) \begin{bmatrix} I_n & 0\end{bmatrix}^\top V(z)$,
where $U(z)$ and $V(z)$ are unimodular. Define
\begin{equation} \label{eqextW}
W(z) = U(z) \begin{bmatrix} V(z) & 0 \\[4pt]
0 & I_{d-n} \end{bmatrix}.
\end{equation}
Then $W(z)$ is a unimodular $d\times d$ matrix polynomial and from $B(z) = W(z)\bigl[I_n \ \ 0\bigr]^\top$ it follows that $\fB = W\nC^d_n$. This proves the only if statement.

To prove the if statement, assume that there exists a $d\times d$ unimodular matrix polynomial $W(z)$ such that $\fB = W\nC^d_n$, where $\nC^d_{n}$ is a top coordinate subspace of $\nC^d$.
Then the columns of $B(z) = W(z) \bigl[I_n \ \ 0\bigr]^\top$ form a basis of $\fB$ and the rank of  $B(\alpha)$ is $n$ for all $\alpha \in \nC$. The equality \eqref{eqWtcs} follows from \eqref{eqrankrannul}.
\end{proof}

\begin{lemma} \label{lbig}
Let $\fB$ be an $n$-dimensional subspace of $\nC^d[z]$ and let $\fC$ be a canonical subspace of $\nC^d[z]$ with degrees $\mu_1 \geq \cdots \geq \mu_d \geq 0$ of which $k$ are positive. Assume
$
\fC + \wh{S}\fC  \subseteq \fB
$ and
\begin{equation} \label{eqWcs1}
\fB \cap \ker \wh{E}_\alpha \subseteq \fC + \wh{S}\fC \qquad \text{for all} \quad \alpha \in \nC.
\end{equation}
Then there exists a unimodular operator ${W}$ which acts as the identity on $\fC + \wh{S}\fC$ and is such that
\begin{equation} \label{eqB=Wcs}
\fB = \wh{W} \bigl(\nC^d_{m} + \fC + \wh{S}\fC \bigr),
\end{equation}
where $m = n - (\mu_1 + \cdots +\mu_d)(\geq 0)$.
\end{lemma}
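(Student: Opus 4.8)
The plan is to split off from $\fB$ the directions not already contained in $\fB_0:=\fC+\wh S\fC$, to straighten them out with Lemma~\ref{lWtcs} applied in $\nC^{d-k}[z]$, and then to reassemble a block--triangular unimodular matrix polynomial. First I would record the structure of $\fB_0$: since the degrees $\mu_1\ge\cdots\ge\mu_d\ge 0$ of $\fC$ have exactly $k$ positive entries, $\mu_j\ge 1$ for $j\le k$ and $\mu_j=0$ for $j>k$, so $\fC$ and hence $\fB_0$ involve only the first $k$ coordinates of $\nC^d$, the vectors $e_{d,1},\dots,e_{d,k}$ lie in $\fC\subseteq\fB_0\subseteq\fB$, and $\dim\fB_0=\sum_{j=1}^{k}(\mu_j+1)=k+\mu_1+\cdots+\mu_d$; in particular $t:=\dim\fB-\dim\fB_0=m-k\ge 0$. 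Writing $\nC^d=\nC^k\times\nC^{d-k}$, I let $P$ be the projection of $\nC^d$, and (coordinatewise) of $\nC^d[z]$, onto the bottom block $\nC^{d-k}$.

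The heart of the argument is the identity $\fB\cap\ker\bigl(P\wh E_\alpha\bigr)=\fB_0$ for every $\alpha\in\nC$. The inclusion $\supseteq$ is clear since elements of $\fB_0$ have vanishing bottom block. For $\subseteq$, take $g\in\fB$ with $P\bigl(g(\alpha)\bigr)=0$; using $e_{d,1},\dots,e_{d,k}\in\fB$, subtract from $g$ the vector supported in the first $k$ coordinates that makes the resulting $g'\in\fB$ vanish at $\alpha$ in all of its first $k$ components. Then $g'$ has the same bottom block as $g$, so $g'\in\fB\cap\ker\wh E_\alpha\subseteq\fB_0$ by hypothesis \eqref{eqWcs1}, whence $g\in\fB_0$. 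I expect this to be the main obstacle, as it is where \eqref{eqWcs1} and the presence of the constants $e_{d,1},\dots,e_{d,k}$ in $\fB$ are genuinely used; the rest is bookkeeping. It follows that $\dim P\wh E_\alpha(\fB)=\dim\fB-\dim\fB_0=t$, so $t\le d-k$ and $m=t+k\le d$, which is what makes the top coordinate subspace $\nC^d_m$ meaningful.

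Next I would fix any algebraic complement $\fB_1$ of $\fB_0$ in $\fB$, so $\dim\fB_1=t$. Because $\ker\bigl(P\wh E_\alpha|_\fB\bigr)=\fB_0$, the restriction of $P\wh E_\alpha$ to $\fB_1$ is injective for every $\alpha$; hence $h\mapsto Ph$ is an isomorphism of $\fB_1$ onto the $t$-dimensional subspace $P\fB_1\subseteq\nC^{d-k}[z]$, and $(P\fB_1)\cap\ker\wh E_\alpha=\{0\}$ for all $\alpha$. By Lemma~\ref{lWtcs} there is a unimodular $(d-k)\times(d-k)$ matrix polynomial $D(z)$ with $P\fB_1=\wh D\,\nC^{d-k}_t$, so its first $t$ columns $\delta_1(z),\dots,\delta_t(z)$ form a basis of $P\fB_1$; pulling these back through the above isomorphism produces a basis $h_1,\dots,h_t$ of $\fB_1$ with $Ph_l=\delta_l$ for $l=1,\dots,t$.

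Finally I would set
\[
W(z)=\begin{bmatrix} I_k & A(z)\\[2pt] 0 & D(z)\end{bmatrix},
\]
where the first $t$ columns of the $k\times(d-k)$ block $A(z)$ are the top blocks of $h_1,\dots,h_t$ and the remaining columns of $A(z)$ are zero. Then $\det W(z)=\det D(z)$ is a nonzero constant, so $W$ is unimodular; its first $k$ columns are $e_{d,1},\dots,e_{d,k}$ and every element of $\fB_0$ is supported in the first $k$ coordinates, so $\wh W$ acts as the identity on $\fB_0$; and for $l=1,\dots,t$ the $(k+l)$-th column of $W(z)$ has top block that of $h_l$ and bottom block $Ph_l$, hence equals $h_l(z)$, i.e.\ $\wh W e_{d,k+l}=h_l$. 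Since $\nC^d_k\subseteq\fB_0$ and $m\le d$, we have $\nC^d_m+\fB_0=\fB_0\oplus\lspan\{e_{d,k+1},\dots,e_{d,m}\}$, and applying the bijection $\wh W$ gives $\wh W\bigl(\nC^d_m+\fB_0\bigr)=\fB_0\oplus\lspan\{h_1,\dots,h_t\}=\fB_0\oplus\fB_1=\fB$, which establishes \eqref{eqB=Wcs}.
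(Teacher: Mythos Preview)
Your proof is correct and follows essentially the same route as the paper's: the key identity $\fB\cap\ker\bigl(P\wh E_\alpha\bigr)=\fB_0$ is exactly the paper's implication that $(I_d-P_{d,k})f(\alpha)=0\Rightarrow f\in\fC+\wh S\fC$, and your block--triangular $W(z)$ with $D(z)$ furnished by Lemma~\ref{lWtcs} applied in $\nC^{d-k}[z]$ is precisely the paper's $W(z)$ with $W_b(z)$ built from the Smith normal form of the bottom block $B_{0,b}(z)$. The only cosmetic differences are that the paper treats the degenerate cases $\fC=\{0\}$ and $k=d$ separately (you handle them uniformly) and carries out the Smith extension by hand rather than citing Lemma~\ref{lWtcs}.
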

Notice that $\nC^d_{m} + \fC + \wh{S}\fC$ is a canonical subspace. If $m \leq k$, then $\nC^d_{m} + \fC + \wh{S}\fC$ coincides with $\fC + \wh{S}\fC$ and $W = I_d$.

\begin{proof}
If $\fC =\{0\}$ the statement follows from Lemma~\ref{lWtcs}. From now on we assume $\fC \neq \{0\}$. Then $\mu_1 > 0$, consequently $k \in \{1,\dots,d\}$ and $\nC^d_k \subseteq \fC$. We consider two cases: $k=d$ and $k<d$.

(i) Assume $k=d$. Then $\nC^d \subseteq \fC \subseteq \fB$. Let $f \in \fB$. It can be written as
 $
f(z) = f(0) + z h(z)= f(0) + (\wh{S}h)(z)$.
Then $\wh{S}h = f - f(0) \in \fB$. Since $\bigl(\wh{S}h\bigr)(0) = 0$, by \eqref{eqWcs1} we get
 $
\wh{S}h \in \fB \cap \ker E_0 \subseteq  \fC + \wh{S}\fC$,
which implies $f = f(0) + \wh{S}h \in \fC + \wh{S}\fC$. That is, $\fB = \fC + \wh{S}\fC$. In this case $m = d$ and with $\wh{W} = \wh{I}_d$ the lemma is proved.

(ii) Assume $k < d$. If $\fC + \wh{S}\fC = \fB$, then \eqref{eqB=Wcs} holds with $W = I_d$ and $m = k$, implying that $\nC^d_m \subseteq \fC$. From now on we assume that $\fC + \wh{S}\fC$ is a proper subspace of $\fB$.
Recall that $\wh{P}_{d,k}$ is the coordinate projection. A trivial, but important observation is
\begin{equation} \label{eqtip0}
\wh{E}_{\alpha} \bigl(\fC + \wh{S}\fC\bigr) = \nC^d_k = \ran \wh{P}_{d,k} \quad \text{for all} \quad \alpha \in \nC.
\end{equation}
Let $\alpha \in \nC$ be arbitrary and let $f \in \fB$ be such that $( I_d - P_{d,k})f(\alpha) = 0$. By  \eqref{eqtip0}, there exists a $p \in \fC + \wh{S}\fC$ such that
$ p(\alpha) = P_{d,k} f (\alpha)$,
hence
\[
(f-p)(\alpha) = ( I_d - P_{d,k})f(\alpha) +  P_{d,k} f(\alpha) - p(\alpha) = 0,
\]
that is, $f-p \in \ker E_\alpha$. Since also $f-p \in \fB$, \eqref{eqWcs1} implies $f-p \in \fC +  S\fC$.  Thus both $p$ and $f-p$ belong to $\fC + \wh{S}\fC$, implying that $f \in \fC + S\fC$. We have proved the implication:
\begin{equation} \label{eqimp1}
f\in\fB, \quad \alpha \in \nC \quad \text{and} \quad ( I_d -  P_{d,k})f(\alpha) = 0 \quad \Rightarrow \quad  f \in \fC +  S\fC.
\end{equation}

Let $\fL_0$ be a subspace of $\fB$ be such that
\begin{equation*} \label{eq2c}
 \bigl(\fC + \wh{S}\fC \bigr) \cap \fL_0 = \{0\}  \qquad \text{and}  \qquad
 \fB = \bigl(\fC + \wh{S}\fC \bigr) \oplus \fL_0.
\end{equation*}
The dimension of $\fL_0$ is
\begin{equation*} \label{eqdimB}
j = n - \bigl(\mu_1+\cdots+\mu_d + k \bigr)\geq 1.
\end{equation*}
Let $B_0(z)$ be a $d \times j$ matrix polynomial whose columns form a basis of $\fL_0$. Decompose $B_0(z)$ as
\[
B_0(z) = \begin{bmatrix} B_{0,t}(z) \\[3pt]  B_{0,b}(z)
\end{bmatrix},
\]
where $B_{0,t}(z)$ is a $k\times j$ matrix polynomial and $B_{0,b}(z)$
a $(d-k)\times j$ matrix polynomial.
We will prove that
\begin{equation} \label{eqsr=br}
\rank B_{0,b}(\alpha) = \rank (I_d - {P}_{d,k})B_0(\alpha) = j \quad \text{for all} \quad \alpha \in \nC.
\end{equation}
The first equality is trivial. To prove the second let $\alpha \in \nC$ be arbitrary and $x \in \nC^j$ be such that $({I}_d - {P}_{d,k})B_0(\alpha)x = 0$.  Set $f(z) = B_0(z)x$.  Then $f \in \fL_0$ and  $({I}_d - {P}_{d,k})f(\alpha) = 0$.  By \eqref{eqimp1}, $f \in \bigl(\fC + \wh{S}\fC \bigr) \cap \fL_0$, consequently $f = 0$, that is, $B_0(z)x = 0$ for all $z \in \nC$. Since the columns of $B_0(z)$ form a basis of $\fL_0$, this implies $x=0$. This proves \eqref{eqsr=br}. Hence $j \leq d-k$. If $j=d-k$, the $(d-k)\times (d-k)$ matrix polynomial $W_b(z):=B_{0,b}(z)$ is unimodular. If $j<d-k$ we can extend $B_{0,b}(z)$ to a unimodular $(d-k)\times (d-k)$ matrix polynomial (also denoted by) $W_{b}(z)$ with $\det W_{b}(\alpha) \neq 0$ in the same way as the matrix $B(z)$ was extended to $W(z)$ in \eqref{eqextW} by the means of the Smith normal form. In both cases the first $j$ columns of $W_{b}(z)$ are the columns of $B_{0,b}(z)$.

Let $W_{t}(z)$ be the $k\times(d-k)$ matrix obtained from the $k\times j$ matrix $B_{0,t}(z)$ by adding $d-k-j$ zero columns on the right. Define the $d\times d$ matrix polynomial $W(z)$ by
\[
W(z) = \begin{bmatrix} I_k & W_t(z) \\[5pt] 0_{(d-k)\times k} & W_{b}(z)
\end{bmatrix}.
\]
Then $W(z)$ is unimodular and $W(z)e_{d,k+l}, l = 1,\ldots,j$, are the columns of the matrix $B_0(z)$. The operator $W$ acts as the identity on $\fC + \wh{S}\fC$ and $W \nC^d_{m} =\nC^d_k + \fL_0$, where
\[
m = k+ j = k+ n - \bigl(\mu_1+\cdots+\mu_d + k \bigr) = n - \bigl(\mu_1+\cdots+\mu_d \bigr).
\]
Hence
$
W \bigl( \nC^d_{m} + \fC + S\fC \bigr) = \nC^d_k + \fL_0 + \fC + S\fC = \fB.
$
\end{proof}

\begin{theorem} \label{tchucs}
Let $\fB$ be  a finite dimensional subspace of $\nC^d[z]$.  The equality
\begin{equation} \label{eqSak}
\ran \bigl(S_{\mf B} - \alpha\bigr) = \mf B \cap \ker E_{\alpha}  \quad
 \text{for all} \quad  \alpha \in \nC
\end{equation}
holds if and only if there exist a $d\times d$ unimodular matrix polynomial $W(z)$ and a canonical subspace $\fC$ of $\nC^d[z]$ such that  $\mf{B} = W \fC$.
\end{theorem}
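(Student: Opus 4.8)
The plan is to prove the two implications separately, beginning with the easier ``if'' direction. Suppose $\mf B = \wh W \fC$ for a unimodular $W(z)$ and a canonical subspace $\fC$. Since $W$ is a bijection on $\nC^d[z]$ commuting with $\wh S$ (because $MS=SM$ for any matrix polynomial), one has $S_{\mf B} = \wh W S_{\fC} \wh W^{-1}$ and $E_\alpha = (W(\alpha)) E_\alpha \wh W^{-1}$ with $W(\alpha)$ invertible; hence it suffices to verify \eqref{eqSak} for a canonical subspace $\fC$ itself. For a canonical $\fC = \bigoplus_{k=1}^d (\nC[z]_{<\mu_k}) e_{d,k}$ the inclusion ``$\subseteq$'' in \eqref{eqSak} always holds by \eqref{eqrannul}; for the reverse, if $f \in \fC$ with $f(\alpha)=0$ then componentwise $p_k(z) = (z-\alpha) q_k(z)$ with $\deg q_k < \mu_k - 1 < \mu_k$, and one checks $g = [q_1 \cdots q_d]^\top \in \dom S_{\fC}$, i.e. $\wh S g \in \fC$, since $\deg(z q_k) \le \mu_k - 1 < \mu_k$. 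This gives $f = (S_{\fC} - \alpha)g$, establishing ``$\supseteq$''.

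For the ``only if'' direction, assume \eqref{eqSak}. The strategy is to build the canonical subspace $\fC$ by an induction that peels off one coordinate direction at a time, with Lemma~\ref{lbig} supplying the inductive step and Lemma~\ref{lWtcs} the base case. Concretely: if $\mf B \cap \ker \wh E_\alpha = \{0\}$ for all $\alpha$, then Lemma~\ref{lWtcs} already gives $\mf B = \wh W \nC^d_n$ and $\nC^d_n$ is canonical (with degrees $1,\dots,1,0,\dots,0$), so we are done. Otherwise, using \eqref{eqSak} repeatedly we extract from $\mf B$ a nonzero ``chain'': choosing $f \in \mf B$ with $f(\alpha)=0$ for some $\alpha$ produces $g \in \dom S_{\mf B}$ with both $g, \wh S g \in \mf B$; by composing such steps (and applying Lemma~\ref{lSr} to the subspace $\dom S_{\mf B}$, which inherits the same property) one identifies inside $\mf B$ a subspace of the form $\fC_0 + \wh S \fC_0$ for a canonical $\fC_0$ maximal with respect to the hypotheses of Lemma~\ref{lbig}. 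The key point to check is the hypothesis \eqref{eqWcs1}: that every $f \in \mf B$ vanishing at some point actually lies in $\fC_0 + \wh S \fC_0$ — this is where maximality of $\fC_0$ is used, since if it failed one could enlarge $\fC_0$. Lemma~\ref{lbig} then yields a unimodular $W$ acting as the identity on $\fC_0 + \wh S\fC_0$ with $\mf B = \wh W(\nC^d_m + \fC_0 + \wh S \fC_0)$, and $\nC^d_m + \fC_0 + \wh S\fC_0$ is manifestly canonical; so $\fC := \nC^d_m + \fC_0 + \wh S\fC_0$ and this $W$ work.

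The main obstacle I anticipate is organizing the extraction of the canonical ``core'' $\fC_0$ so that the hypotheses of Lemma~\ref{lbig}, especially the containment \eqref{eqWcs1}, are genuinely satisfied rather than merely plausible. The subtlety is that $\mf B$ may have the property \eqref{eqSak} without the ``obvious'' smallest shift-stable pieces being canonical on the nose — the different coordinate degrees $\mu_k$ need not be detected simultaneously, and one must be careful that the change of basis recorded by $W$ in Lemma~\ref{lbig} does not disturb the part already normalized. I expect the cleanest route is a double induction (on $\dim \mf B$ and on $d$), peeling off $\dom S_{\mf B}$ via Lemma~\ref{lSr} in the inner step and invoking the inductive hypothesis on it, then reassembling; alternatively one argues directly that the set $\{\alpha : \det W(\alpha) \ne 0\}$ produced by the Smith normal form of a basis matrix $B(z)$ equals $\{\alpha : b_1(\alpha) \ne 0\}$ and equals the set in \eqref{eqdetrannul}, which both pins down $\fC$ and, since \eqref{eqSak} forces this set to be all of $\nC$, forces $b_1 \equiv \text{const}$ and hence $W$ unimodular.
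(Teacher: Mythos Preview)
Your ``if'' direction is fine and matches the paper's argument.

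Your main plan for the ``only if'' direction has a genuine gap. You propose to locate inside $\mf B$ itself a canonical subspace $\fC_0$ (maximal in some sense) satisfying the hypotheses of Lemma~\ref{lbig}, and then argue that maximality forces \eqref{eqWcs1}. The problem is that canonical subspaces are rigidly tied to the standard coordinate directions $e_{d,1},\ldots,e_{d,d}$, and a space $\mf B$ satisfying \eqref{eqSak} need not contain \emph{any} nonzero canonical subspace. For instance, take $d=2$ and $\mf B=\lspan\bigl\{[1\ 1]^\top,[z\ z]^\top\bigr\}$: one checks \eqref{eqSak} directly, yet every nonzero element of $\mf B$ has equal coordinates, so no nonzero canonical subspace sits inside $\mf B$. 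Your maximal $\fC_0$ is then $\{0\}$, whence $\fC_0+\wh S\fC_0=\{0\}$, and \eqref{eqWcs1} fails (e.g.\ $[z\ z]^\top\in\mf B\cap\ker\wh E_0$). The sentence ``if it failed one could enlarge $\fC_0$'' is exactly where the argument breaks: there is no canonical direction available to enlarge into.

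The alternative you mention in passing --- inducting on $\dim\mf B$ by passing to $\fA=\dom S_{\mf B}$ via Lemma~\ref{lSr} --- is the correct route and is what the paper does (a single induction on $\dim\mf B$; no induction on $d$ is needed). The crucial point you are missing is that the inductive hypothesis applied to $\fA$ produces a unimodular $F$ with $F\fA$ canonical, and Lemma~\ref{lbig} is then applied to $F\mf B$, \emph{not} to $\mf B$. The canonical piece $\fD=F\fA$ lives inside $F\mf B$, and one verifies \eqref{eqWcs1} there using \eqref{eqSak} and the fact that $F$ commutes with $\wh S$ and preserves $\ker\wh E_\alpha$. Lemma~\ref{lbig} yields a further unimodular $U$ with $U^{-1}F\mf B$ canonical, and $W=F^{-1}U$ is the desired map. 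In short: the unimodular change of basis must come \emph{before} you look for a canonical core, and induction is what supplies it.
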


\begin{proof}
We first prove the if statement. To prove \eqref{eqSak} it suffices to show that $$\fB \cap \ker E_\alpha \subseteq \ran (S_\fB -\alpha).$$ Let $f \in \fB \cap \ker E_\alpha$.  Then
$f(\alpha) = 0$ and $f = Wg$ for some $g \in \fC$. Since $W$ is unimodular,   $g(\alpha) = 0$. Since $\fC$ is canonical, the polynomial $g(z)/(z-\alpha)$ belongs to $\fC$. Therefore
 $
f(z)/(z-\alpha) = W(z)\bigl(g(z)/(z-\alpha)\bigr) \in \fB$,
  hence $f \in \ran(S_\fB- \alpha)$.

We prove the only if statement by induction on the dimension of $\fB$. Assume \eqref{eqSak}.
The theorem is obviously true if $\dim \fB = 0$. Lemma~\ref{lWtcs} implies that it is true if $\dim \fB = 1$ for then $\mathfrak B \cap E_\alpha=\{0\}$. Let $n \in \nN$ and state the inductive hypothesis:

If $\fA$ is a subspace of $\nC^d[z]$ with $\dim \fA < n$ and such that
\begin{equation} \label{eqSakA}
\ran \bigl(S_{\mf A} - \alpha\bigr) =  \mf A  \cap  \ker E_{\alpha}  \quad
 \text{for all} \quad  \alpha \in \nC,
\end{equation}
then there exists a unimodular $d \times d$ matrix polynomial operator $F(z)$ such that $F \mf{A}$ is a canonical subspace of $\nC^d[z]$.

Let $\fB$ be a finite dimensional subspace of $\nC^d[z]$ such that \eqref{eqSak} holds and $\dim\fB = n$. Then $\fA = \dom S_\fB$ is a proper subspace of $\fB$. Therefore $\dim \fA < n$. If $\fA =\{0\}$, then $\fB \cap \ker E_\alpha = \ran\bigl(S_\fB - \alpha\bigr) = \{0\}$ and the theorem follows from Lemma~\ref{lWtcs}. Now we assume $\fA \neq \{0\}$. By Lemma~\ref{lSr} the subspace $\fA$ satisfies \eqref{eqSakA}. By the inductive hypothesis there exists a unimodular matrix polynomial $F(z)$ such that $\fD:= F\fA$
is a canonical subspace of $\nC^d[z]$.
Since $F$ and $S$ commute we have
$
\fD = F \fA = F \dom S_\fB = \dom S_{F\fB}$,
hence
$
\fD + \wh{S} \fD \subseteq U \fB$.
To apply Lemma~\ref{lbig} to $F\fB$ we need to verify \eqref{eqWcs1}. Let $f \in \fB$ be such that $(Ff)(\alpha)=0$. Then $f(\alpha)=0$ and, by \eqref{eqSak}, there exists a $g \in \dom S_\fB =  \fA$ such that $f = S_\fB g - \alpha g \in \fA + \wh{S}\fA$.  Therefore, $Ff\in \fD + \wh{S} \fD$, which verifies \eqref{eqWcs1}. Lemma~\ref{lbig} applied to $F\fB$ yields that there exists a unimodular operator $U$ such that $U^{-1}F\fB$ is a canonical subspace of $\nC^d[z]$. This proves the theorem with $W=F^{-1}U$
\end{proof}
\subsection{}
The following lemma will be used to deduce Theorem~~\ref{tchucsg} from Theorem~~\ref{tchucs}.

\begin{lemma} \label{lonemany}
Let $\fB$ be an $n$-dimensional subspace of $\nC^d[z]$ and let $B(z)$ be a $d \times n$ matrix polynomial whose columns form a basis of $\fB$.
Let $l$ be the size of the square diagonal matrix in the Smith normal form \eqref{eqsnf} of $B(z)$. Then
\begin{equation} \label{eqrannulrank}
\bigl\{\alpha \in \nC \,:\, \ran(S_\fB - \alpha) = \fB \cap \ker E_\alpha \bigr\} = \bigl\{ \alpha \in \nC \,:\, \rank B(\alpha) = l \bigr\}
\end{equation}
if and only if the set on the left hand side is nonempty. In this case $\dim \ran S_{\mathfrak B}=\dim \mathfrak B - l$.
\end{lemma}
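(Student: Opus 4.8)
The plan is to deduce the whole lemma from a single elementary dimension inequality valid for an arbitrary finite‑dimensional subspace of $\nC^d[z]$, and then read everything off by comparing dimensions; no appeal to Theorem~\ref{tchucs} is needed.

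First I would record the inequality $\dim\dom S_{\fB}\le n-l$, where $n=\dim\fB$. For every $\alpha\in\nC$ the inclusion $\ran(S_{\fB}-\alpha)\subseteq\fB\cap\ker E_\alpha$ holds by \eqref{eqrannul}; since $\wh{S}-\alpha$ is injective on $\nC^d[z]$ we have $\dim\ran(S_{\fB}-\alpha)=\dim\dom S_{\fB}$, and since $E_\alpha|_{\fB}$ has matrix $B(\alpha)$ we have $\dim(\fB\cap\ker E_\alpha)=n-\rank B(\alpha)$. Choosing $\alpha$ with $b_1(\alpha)\neq 0$ — which holds for all but finitely many $\alpha$ — gives $\rank B(\alpha)=l$ and hence $\dim\dom S_{\fB}\le n-l$. (Recall from the Smith normal form \eqref{eqsnf} that $\rank B(\alpha)\le l$ for every $\alpha$, with equality exactly when $b_1(\alpha)\neq 0$.)

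Next, assume the left‑hand set of \eqref{eqrannulrank} is nonempty, say $\ran(S_{\fB}-\alpha_0)=\fB\cap\ker E_{\alpha_0}$. Then $\dim\dom S_{\fB}=\dim\ran(S_{\fB}-\alpha_0)=\dim(\fB\cap\ker E_{\alpha_0})=n-\rank B(\alpha_0)$, and combining this with the inequality above gives $\rank B(\alpha_0)\ge l$; as also $\rank B(\alpha_0)\le l$, we conclude $\rank B(\alpha_0)=l$ and $\dim\dom S_{\fB}=n-l$. Injectivity of $\wh{S}$ then yields $\dim\ran S_{\fB}=\dim\dom S_{\fB}=n-l$, which is the last assertion of the lemma. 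The same computation applied with $\alpha_0$ replaced by any point $\alpha$ of the left‑hand set of \eqref{eqrannulrank} shows $\rank B(\alpha)=l$, so the left‑hand set is contained in the right‑hand one. For the reverse inclusion, if $\rank B(\beta)=l$ then $\dim(\fB\cap\ker E_\beta)=n-l=\dim\dom S_{\fB}=\dim\ran(S_{\fB}-\beta)$; since $\ran(S_{\fB}-\beta)\subseteq\fB\cap\ker E_\beta$ by \eqref{eqrannul}, these spaces coincide and $\beta$ lies in the left‑hand set. Hence the two sets are equal. Finally, the right‑hand set of \eqref{eqrannulrank} is always nonempty (it is cofinite when $n\ge 1$, being the complement of the finite zero set of $b_1$, and equals $\nC$ when $n=0$), so equality of the two sets forces the left‑hand set to be nonempty; together with the preceding sentences this yields the stated equivalence.

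There is essentially no obstacle beyond bookkeeping: the one point that requires care is the inequality $\rank B(\alpha_0)\ge l$, which is what makes the nonemptiness hypothesis bite, and it comes from the generic dimension count of the first step rather than from any structural description of $\fB$.
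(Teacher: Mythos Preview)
Your proof is correct and follows essentially the same route as the paper's: both establish the key inequality $\dim\ran S_{\fB}\le n-l$ from the inclusion \eqref{eqrannul} and the rank computation $\dim(\fB\cap\ker E_\alpha)=n-\rank B(\alpha)$, then use a point $\alpha_0$ in the left-hand set to force equality, and finally read off \eqref{eqrannulrank} by comparing dimensions. The only cosmetic differences are that you phrase the inequality via $\dim\dom S_{\fB}$ rather than $\dim\ran S_{\fB}$ (these coincide by injectivity of $\wh S$) and place the ``only if'' observation at the end rather than the beginning.
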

\begin{proof}
The only if statement follows from the fact that the set on the right hand side in \eqref{eqrannulrank} is nonempty. Before proving the if statement we show
\begin{equation} \label{eqSBl}
\dim \ran S_\fB \leq \dim \fB - l.
\end{equation}
For all $\alpha \in \nC$ we have
$
\ran(S_\fB - \alpha) \subseteq \fB \cap \ker E_\alpha
$,
and hence
\[
\dim \ran S_\fB  = \dim \ran(S_\fB - \alpha) \leq \dim (\fB \cap \ker E_\alpha)  = \dim \fB - \rank B(\alpha).
\]
Consequently, 
$
l = \max_{\alpha\in \nC} \rank B(\alpha) \leq \dim \fB - \dim \ran S_\fB
$.
This proves \eqref{eqSBl}.

To prove the if statement assume that $\alpha_0 \in \nC$ is in the set on the left hand side of \eqref{eqrannulrank}.  Then equality holds in \eqref{eqSBl}. Indeed, this follows from
 {\allowdisplaybreaks
\begin{align*}
\dim \fB - l & \geq \dim \ran S_\fB \\
 & = \dim \ran (S_\fB - \alpha_0 ) \\
 & = \dim \bigl( \fB \cap \ker E_{\alpha_0} \bigr) \\
 & = \dim \fB - \rank B(\alpha_0) \\
 & \geq \dim \fB - l.
\end{align*}}
\!\!This proves the last statement in the lemma. Now the equality \eqref{eqrannulrank} follows from the following sequence of equivalences which hold for all $\alpha \in \nC$:
 {\allowdisplaybreaks
\begin{align*}
\rank B(\alpha) = l \ & \Leftrightarrow \ \dim (\fB \cap \ker E_\alpha) = \dim \fB - l \\
 & \Leftrightarrow \ \dim (\fB \cap \ker E_\alpha) = \dim \ran S_\fB  \\
  &  \Leftrightarrow \  \dim (\fB \cap \ker E_\alpha) = \dim \ran (S_\fB -\alpha) \\
  & \Leftrightarrow \ \ran(S_\fB-\alpha) = \fB \cap \ker E_\alpha.  \qedhere
\end{align*}}
\end{proof}

\begin{proof}[Proof of Theorem~{\rm~\ref{tchucsg}}]
We first prove the if statement. It suffices to prove the inclusion $\fB \cap \ker E_{\alpha} \subseteq \ran (S_\fB -\alpha)$, as the reverse inclusion always holds. Let $f \in \fB \cap \ker E_{\alpha}$.  Then $f(\alpha) = 0$ and $f = Wg$ with $g \in \fC$. Since $W(\alpha)$ is invertible,  $g(\alpha) = 0$. As $\fC$ is canonical, the polynomial $h(z) = g(z)/(z-\alpha)$ belongs to $\fC$. Therefore $W(z) h(z) \in \fB$ and $(x-\alpha)W(z) h(z) = f(z)$, which implies $f \in \ran(S_\fB- \alpha)$.

To prove the only if statement, assume that \eqref{eqSakg} holds for $\alpha=\alpha_0$. Let $B(z)$ be a $d \times n$ matrix polynomial whose columns form a basis of $\fB$. Let
\begin{equation*} 
B(z) = U(z) \begin{bmatrix} D(z) & 0 \\[4pt]
0 & 0
\end{bmatrix} V(z)
\end{equation*}
be the Smith normal form \eqref{eqsnf} of $B(z)$ where $D(z)$ is an $l\times l$ diagonal matrix with nonzero diagonal entries. Now define the space $\fB_1 \subset \nC^d[z]$ as the span over $\nC$ of the columns of
\begin{equation*} 
B_1(z) = U(z) \begin{bmatrix} I_l & 0 \\[4pt]
0 & 0
\end{bmatrix} V(z).
\end{equation*}
Set
\[
F(z) = U(z) \begin{bmatrix} D(z) & 0 \\[4pt]
0 & I_{d-l}
\end{bmatrix} U(z)^{-1}.
\]
Then $\fB =  F\fB_1$ and ${\det}F(\alpha_0)\neq 0$. Moreover, since
$$\{\alpha \in \mathbb C \,:\,  {\det}F(\alpha)\neq 0\}=\{\alpha \in \mathbb C \,:\, {\rank}B(\alpha)=l\}$$
and by Lemma~\ref{lonemany}, \eqref{eqdetrannul} holds for $F(z)$.
From ${\det}F(\alpha_0)\neq 0$ it follows that
\[
\ran(S_\fB -\alpha_0) = \fB \cap \ker E_{\alpha_0} \quad \Rightarrow \quad  \ran(S_{\fB_1} -\alpha_0) = \fB_1 \cap \ker E_{\alpha_0}.
\]
Since $\rank B_1(\alpha) = l$ for all $\alpha \in \nC$, Lemma~\ref{lonemany} implies that
\[
\ran(S_{\fB_1} -\alpha) = \fB_1 \cap \ker E_{\alpha} \quad \text{for all} \quad \alpha \in \nC.
\]
By Theorem~\ref{tchucs}, there exists a unimodular matrix $U(z)$ such that $\mathfrak C=U^{-1}{\mathfrak B}_1$ is a canonical subspace of $\nC^d[z]$, hence $\fB = W\fC$ with $W=FU$. Finally, \eqref{eqdetrannul} holds, because $U$ is unimodular and $F(z)$ satisfies \eqref{eqdetrannul}.
\end{proof}
\subsection{}
Theorem~\ref{tchucsg} can also be formulated in terms of matrix polynomials:
\begin{theorem}\label{tmatrixversion}
Let $\fB$ be an $n$-dimensional subspace of $\nC^d[z]$, $n \geq 1$. Let $B(z)$ be a $d\times n$ matrix polynomial whose columns form a basis of $\fB$. Let $b_1(z)$ and $l$ be as in the Smith normal form \eqref{eqsnf} of $B(z)$. Then $l+\dim \dom S_\fB= \dim \fB$ if and only if there exist
\begin{enumerate}[{\rm (a)}]
\item \label{tmatva}
a $d \times d$ matrix polynomial $W(z)$ whose determinant has the same zeros as $b_1(z)$,
\item \label{tmatvb}
nonnegative integers $m$ and $\delta_0 \geq \delta_1 \geq \cdots \geq \delta_m$ with $\delta_0 + \cdots + \delta_m = n$ and
\item \label{tmatvc}
an invertible $n\times n$ constant matrix $T$
\end{enumerate}
such that
\begin{equation}\label{eqbwpt}
B(z) = W(z) \Bigl[ P_{\delta_0} \ \ P_{\delta_1} z \ \cdots \ P_{\delta_m} z^m \Bigr] \, T \quad \text{for all} \quad z \in \nC,
\end{equation}
where $P_\delta$ stands for the $d \times \delta$ matrix: $P_\delta=\begin{bmatrix} I_\delta &0 \end{bmatrix}^\top$.
\end{theorem}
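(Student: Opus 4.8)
The plan is to deduce Theorem~\ref{tmatrixversion} from Theorem~\ref{tchucsg}, with Lemma~\ref{lonemany} providing the bridge between the numerical hypothesis $l+\dim\dom S_\fB=\dim\fB$ and the geometric hypothesis of Theorem~\ref{tchucsg}. First I would note the elementary fact that $S_\fB$ (and each $S_\fB-\alpha$) is an injective operator, since $z f(z)\equiv0$ forces $f\equiv0$; hence $\dim\dom S_\fB=\dim\ran S_\fB=\dim\ran(S_\fB-\alpha)$ for every $\alpha\in\nC$, and the hypothesis becomes $\dim\ran S_\fB=\dim\fB-l$, i.e.\ equality in \eqref{eqSBl}. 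I claim this is equivalent to $\ran(S_\fB-\alpha)=\fB\cap\ker E_\alpha$ holding for at least one $\alpha\in\nC$. For the forward implication, pick $\alpha$ with $\rank B(\alpha)=l$, which is possible since $l$ is the maximal rank of $B$; then $\dim(\fB\cap\ker E_\alpha)=\dim\fB-\rank B(\alpha)=\dim\fB-l=\dim\ran(S_\fB-\alpha)$, and since $\ran(S_\fB-\alpha)\subseteq\fB\cap\ker E_\alpha$ by \eqref{eqrannul}, equality of dimensions gives equality of the two subspaces. The reverse implication is the last sentence of Lemma~\ref{lonemany}.

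Granting this equivalence, Theorem~\ref{tchucsg} tells us that $l+\dim\dom S_\fB=\dim\fB$ holds if and only if $\fB=W\fC$ for some canonical subspace $\fC$ of $\nC^d[z]$, say with degrees $\mu_1\ge\cdots\ge\mu_d\ge0$ (permute the columns of $W$ by a constant permutation matrix so as to order them), and some $d\times d$ matrix polynomial $W(z)$ that, by \eqref{eqdetrannul}, may be taken so that $\{\alpha:\det W(\alpha)\ne0\}=\{\alpha:\ran(S_\fB-\alpha)=\fB\cap\ker E_\alpha\}$; by Lemma~\ref{lonemany} the latter set is $\{\alpha:\rank B(\alpha)=l\}$, which by the remark following the Smith normal form \eqref{eqsnf} is $\{\alpha:b_1(\alpha)\ne0\}$. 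Thus $\det W$ and $b_1$ have the same zeros, which is condition~(\ref{tmatva}). To obtain \eqref{eqbwpt} I would now convert the description of $\fC$ by its row degrees into block form: the canonical subspace $\fC$ has the ``staircase'' basis $\{e_{d,k}\,z^j:1\le k\le d,\ 0\le j<\mu_k\}$, and grouping these vectors by the power $z^j$ leaves, for each fixed $j$, the vectors $e_{d,k}\,z^j$ with $\mu_k>j$, i.e.\ with $k\le\delta_j:=\#\{k:\mu_k>j\}$. The $\delta_j$ are automatically nonincreasing, $m:=\mu_1-1\ge0$ (since $n\ge1$), $\delta_0+\cdots+\delta_m=\sum_k\mu_k=n$, and the columns of $\bigl[P_{\delta_0}\ P_{\delta_1}z\ \cdots\ P_{\delta_m}z^m\bigr]$ are precisely this staircase basis of $\fC$; in other words $(\delta_j)$ is the partition of $n$ conjugate to $(\mu_k)$. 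Since $f\mapsto Wf$ is injective on $\nC^d[z]$ when $\det W\not\equiv0$, the columns of $W(z)\bigl[P_{\delta_0}\ \cdots\ P_{\delta_m}z^m\bigr]$ form a basis of $W\fC=\fB$; as the columns of $B(z)$ are another basis of the $\nC$-vector space $\fB$, the two matrix polynomials differ by an invertible constant $n\times n$ change-of-basis matrix $T$, which yields \eqref{eqbwpt} together with (\ref{tmatvb}) and (\ref{tmatvc}).

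For the reverse direction, suppose $B(z)=W(z)\bigl[P_{\delta_0}\ \cdots\ P_{\delta_m}z^m\bigr]T$ as in (\ref{tmatva})--(\ref{tmatvc}). Then $\fB=W\fC$, where $\fC$ is the span of the columns of $\bigl[P_{\delta_0}\ \cdots\ P_{\delta_m}z^m\bigr]$; reading this matrix row by row shows that $\fC$ is canonical with degrees $\mu_k=\#\{j:\delta_j\ge k\}$ (the partition conjugate to $(\delta_j)$) and $\dim\fC=\sum_j\delta_j=n$. Since $\det W$ has the same zeros as $b_1\not\equiv0$, we have $\det W\not\equiv0$, so Theorem~\ref{tchucsg} gives $\ran(S_\fB-\alpha)=\fB\cap\ker E_\alpha$ for every $\alpha$ with $\det W(\alpha)\ne0$, hence for at least one $\alpha$; by the equivalence of the first paragraph this gives $l+\dim\dom S_\fB=\dim\fB$.

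The only real content beyond quoting Theorem~\ref{tchucsg} and Lemma~\ref{lonemany} is the combinatorial bookkeeping with conjugate partitions in the middle paragraphs — verifying that regrouping the staircase basis of a canonical subspace by powers of $z$ yields exactly the blocks $P_{\delta_j}z^j$ with $\delta_0\ge\cdots\ge\delta_m$ and $\sum_j\delta_j=n$, and that the passage between $(\mu_k)$ and $(\delta_j)$ is a bijection. I expect this to be the principal, if routine, obstacle; the change-of-basis step and the injectivity of $f\mapsto Wf$ are immediate.
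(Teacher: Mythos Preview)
Your proposal is correct and follows essentially the same route as the paper's proof: both establish the equivalence of $l+\dim\dom S_\fB=\dim\fB$ with the geometric condition $\ran(S_\fB-\alpha)=\fB\cap\ker E_\alpha$ for some $\alpha$, then invoke Theorem~\ref{tchucsg} and Lemma~\ref{lonemany}, and finally translate the canonical subspace description into the block form~\eqref{eqbwpt} via the conjugate-partition bookkeeping $\delta_j=\#\{k:\mu_k>j\}$, $m=\mu_1-1$. Your write-up is in fact a bit more explicit than the paper's (you spell out the injectivity of $S_\fB$ and the conjugate-partition bijection), but the substance is the same.
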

\begin{proof} For all $\alpha \in \mathbb C$ we have ${\ker}(S_\mathfrak B-\alpha) \subseteq \mathfrak B \cap {\ker}E_\alpha$. For all $\alpha \in \mathbb C$ with $b_1(\alpha)\neq 0$ we have
 {\allowdisplaybreaks
\begin{align*}
 l+\dim \dom S_\fB & =\dim \ran \bigl( E_\alpha|_{\mathfrak B}\bigr) + \dim \ran(S_\mathfrak B -\alpha)
 \\ & \leq \dim \ran \bigl( E_\alpha|_{\mathfrak B}\bigr) + \dim
 (\mathfrak B \cap {\ker}E_\alpha)
\\ & =\dim \mathfrak B
\end{align*}}
and equality holds if and only if
${\ker}(S_\mathfrak B-\alpha)= \mathfrak B \cap {\ker}E_\alpha$.

To prove the only if statement, assume $l+\dim \dom S_\fB= \dim \fB$. Then we can apply Theorem~\ref{tchucsg}: There exist a matrix  polynomial $W(z)$ satisfying (\ref{tmatva}) and a canonical subspace $\mathfrak C$ of $C^d[z]$ such that $\mathfrak B=W\mathfrak C$. Let
$\mu_1\geq \cdots \geq \mu_d$ be the degrees of $\mathfrak C$. Since $n\geq 1$, we have $\mu_1\geq 1$. set $m=\mu_1-1$ and
$$\delta_j=\# \{i\in \{1,\ldots, d\}\,:\,  \mu_i>j\}, \quad j \in \{0, \ldots,m\}.$$
Then the equality in (\ref{tmatvb}) holds. Since the columns
of the matrix $\begin{bmatrix} P_{\delta_0} & \cdots & P_{\delta_m}z^m\end{bmatrix}$
form a basis for $\mathfrak C$, there exists a
matrix $T$ satisfying (\ref{tmatvc}) such that \eqref{eqbwpt} holds.

To prove the if statement, we note that (\ref{tmatva})-(\ref{tmatvc}) and \eqref{eqbwpt} imply that $\mathfrak B=W\mathfrak C$ with $\mathfrak C$ as above, and hence Theorem~\ref{tchucsg} can be applied and together with the if and only if statement at the beginning of the proof yield that
$l+\dim \dom S_\fB= \dim \fB$.
\end{proof}

\begin{remark} \label{rfim}
Denote by $(S^j)_\mathfrak B$ the range restriction of $S^j$ to $\mathfrak B$. Then in item (\ref{tmatvb}) of Theorem~\ref{tmatrixversion}: $m$ is the nonnegative integer with
$$\{0\}={\dom}(S^{m+1})_\mathfrak B \subsetneq  {\dom}(S^{m})_\mathfrak B$$
and
$$
\delta_j=\dim \dom (S^{j})_\mathfrak B-
\dim \dom (S^{j+1})_\mathfrak B, \quad j \in \{0,\ldots, m\}.
$$
Moreover, if we set $\delta_{-1}=d$, then the numbers
\begin{equation} \label{eqfir}
\mu_k=1+{\max}\bigl\{j \in \{-1,0,\ldots,m\}\,:\, \delta_j\geq k\bigr\}, \quad k\in \{1,\ldots,d\},
\end{equation}
are the degrees of the canonical space $W^{-1}\mathfrak B$. In the next section we will see that if $\fB \subset \nC^d[z]$ is a Pontryagin space which  satisfies the conditions (\ref{tchpnkca}) and~(\ref{tchpnkcb}) of Theorem~\ref{tchpnk}, then the numbers \eqref{eqfir} are the Forney indices of a matrix polynomial ${\mathbf P}(z)$ in a representation \eqref{eqpnkg} of the Nevanlinna reproducing kernel $K(z,w)$ of $\mathfrak B$; see Remark~\ref{rlast}.
\end{remark}

\section{Proof of Theorem~{\rm~\ref{tchpnk}}} \label{s4}
\subsection{}
We divide the proof of  Theorem~{\rm~\ref{tchpnk}} in two parts. In the first part we prove the if statements and in the second part we prove the only if statements. %
In the first part we will need characterizations of the defect numbers of the operator $S_{\mathfrak B}$ of multiplication by the independent variable in the Pontryagin space ${\mathfrak B}$ which are collected in the following remark.

\begin{remark} \label{remdefect}
Clearly, $S_{\mathfrak B}$ has no eigenvalues and for any subset $\Omega$ of $\mathbb C$ containing more than $d \times {\max}\{{\deg} f \,:\, f \in \mathfrak B\}$ elements we have $\cap_{w \in \Omega}\,{\ran}(S_{\mathfrak B}-w^*)=\{0\}$ or, equivalently,
$$
{\mathfrak B} = \lspan \bigl\{ \ker (S_{\mathfrak B}^*-w): w \in {\Omega} \bigr\}.
$$
Now assume \eqref{tchpnkca} of Theorem~{\rm~\ref{tchpnk}}. Then, by the above observations, $S_{\mathfrak B}$ is a simple symmetric operator and hence its defect numbers coincide and are equal to the codimension of ${\ran}S_{\mathfrak B}$, see Lemma~\ref{lsr}. It follows from Lemma ~\ref{lonemany} that the defect numbers of $S_{\mathfrak B}$ are also equal to the integer $l$ introduced in Remark~\ref{relD}. Hence $l\in \{1,\ldots,{\min}\{d,n\}\}$, where $n = \dim \fB$. Now also assume \eqref{tchpnkcb} of Theorem~{\rm~\ref{tchpnk}}. Then $l$ can be characterized in a different way. Indeed, by Theorem~\ref{tchucsg}, there exist a canonical subspace $\fC \subseteq \nC^d[z]$ with degrees $\mu_1 \geq \cdots \geq \mu_d \geq 0$ and a $d \times d$ matrix polynomial $W(z)$ with $\det W(\alpha) \neq 0$ such that $\fB  = W\fC$. Since, by Lemma~\ref{lsr}, we have $n-l={\dim}{\dom}S_{\mathfrak B} = {\dim} {\ran}S_{\mathfrak B}$ and since multiplication by $z$ and by $W(z)$ commute, $l$ is uniquely determined by the inequalities:
\begin{equation} \label{eqmus}
\mu_1 \geq \cdots \geq \mu_{l} \geq 1 \quad \text{and} \quad \mu_{l+1} = \cdots = \mu_d = 0.
\end{equation}
\end{remark}

\medskip

\begin{proof}[Proof of the if statements in Theorem~{\rm~\ref{tchpnk}}] Assume (\ref{tchpnkca}) and (\ref{tchpnkcb}).
We show that $\mathfrak B$ has a reproducing Nevanlinna kernel in steps (i)--(iv). In step (v) we prove the last if statement in the theorem.

\medskip

\noindent {\bf (i)} By Theorem~\ref{tchucsg} there exist a canonical subspace $\fC \subseteq \nC^d[z]$ with degrees $\mu_1 \geq \cdots \geq \mu_d \geq 0$ and a $d \times d$ matrix polynomial $W(z)$ with $\det W(\alpha) \neq 0$ such that $\fB  = W\fC$. Then, by Remark~\ref{remdefect}, the defect numbers of the symmetric operator $S_\fB$ are both equal to $l$, where $l$ is determined by
the inequalities \eqref{eqmus}.  It follows that the elements of $\mathfrak B$
are of the form:
\begin{equation}\label{eqx0}f(z) \in \mathfrak B \Rightarrow f(z)=W(z)\begin{bmatrix} x(z) \\ 0 \end{bmatrix},
\end{equation}
where $x(z)$ is an $l\times 1$ vector polynomial and $0$ denotes the zero vector of size $(d-l) \times 1$. Let $n=\dim \mathfrak B$ and
let $B(z)$ be a $d \times n$ matrix polynomial whose columns form a basis of $\fB$. Let $G$ be the corresponding Gram matrix and write the reproducing kernel $K(z,w)$ of $\fB$ as $K(z,w) =  B(z)  G^{-1}  B(w)^*$, $ z,w \in \nC$.
By (\ref{tchpnkcb}), this representation implies that for each $w \in \nC$ which belongs to the set in \eqref{eqrannulrank} the columns of $K(z,w)$ span an  $l$-dimensional subspace of $\mathfrak B$, in formula:
\begin{equation}\label{eqdimK}
{\dim}\bigl\{K(\,\cdot\,,w)x \,:\, x \in \mathbb C^d\bigr\}=l  \quad
\text{whenever} \quad w \in \bigl\{ \alpha \in \nC \,:\, \rank B(\alpha) = l \bigr\}.
\end{equation}

\medskip

\noindent {\bf (ii)} In the following we use graph notation in the space
$\fB \oplus\fB$. The operator $S_\fB$ is identified with its graph in $\fB\oplus\fB$ and its adjoint $S_\fB^*$ is the orthogonal complement of $S_\fB$ in $\fB \oplus\fB$ equipped with the Lagrange inner product \begin{equation*}
\bigl[\!\!\bigl[ \{f,g\},\{p,q\} \bigr]\!\!\bigr]
 = -\iu \bigl([g,p]_\fB - [f,q]_\fB \bigr), \qquad \{f,g\}, \{p,q\} \in \fB\oplus\fB.
\end{equation*}
Let $w \in \nC$, $x \in \nC^d$ and $\{f,Sf\} \in S_\fB$ be arbitrary. Then
\begin{align*}
\bigl[\!\!\bigl[ \{f,Sf\},\{K(\cdot,w)x,w^*K(\cdot,w)x\} \bigr]\!\!\bigr]
 & =  -\iu \bigl(\bigl[Sf,K(\cdot,w)x\bigr]_\fB
  - \bigl[f,w^*K(\cdot,w)x\bigr]_\fB \bigr) \\
 & =  -\iu \bigl(x^*(Sf)(w) - w x^*f(w) \bigr) \\
 & = 0
\end{align*}
and hence $\bigl\{ \{K(\cdot,w)x, w^* K(\cdot,w )x \}\, : \, x \in \nC^d \bigr\} \subseteq S_\fB^* \cap (w^* I)$ for all $w \in \nC$. According to the definition of defect number (see \cite[p.369]{acdJFA}) and by \eqref{eqdimK}, it follows that for all \mbox{$w \in \bigl\{ \alpha \in \nC\setminus \mathbb R \,:\, \rank B(\alpha) = l \bigr\}$}
\begin{equation} \label{eqdefsp}
\bigl\{ \{K(\cdot,w)x, w^* K(\cdot,w )x \} \,:\, x \in \nC^d \bigr\} = S_\fB^* \cap (w^* I),
\end{equation}
because for such $w$'s both sets have dimension $l$. Consider the subspace
\begin{equation} \label{eqlisub}
\fL_0 := \lspan \bigl\{ \{K(\cdot,w)x, w^*K(\cdot,w)x \} \,:\, w \in \nC,\ x\in \nC^d \bigr\}
\end{equation}
of $S_\fB^*$. Since $S_\fB$ has no eigenvalues, the generalized von Neumann formula given in  \cite[Theorem~3.7]{acdJFA} implies that for $d+1$ distinct points  $w_0,\ldots,w_d$ from the set  $\bigl\{ \alpha \in \nC \,:\, {\im} \alpha >0, \,\rank B(\alpha) = l \bigr\}$ we have
\begin{equation*}
S_\fB^* = S_\fB + S_\fB^* \cap (w_0^* I) + \sum_{j=1}^d S_\fB^* \cap (w_j I).
\end{equation*}
Combined with \eqref{eqdefsp} and \eqref{eqlisub} this yields
$S_\fB + \fL_0 \subseteq S_\fB^* \subseteq S_\fB + \fL_0$, and hence
\begin{equation} \label{eqB0S*}
S_\fB^* = S_\fB + \fL_0.
\end{equation}

\medskip

\noindent {\bf (iii)} Let $\bigl(\fB_1, \kip_{\fB_1} \bigr)$ be the reproducing kernel Pontryagin space whose kernel is
\[
L_1(z,w) = \iu \, (z-w^*)K(z,w), \quad z, w \in \nC.
\]
We claim that its positive and negative index are $l$. To prove the claim we consider the operator \mbox{$T: \bigl(S_\fB^*,\lip \bigr) \rightarrow \fB_1$}  defined by $T\bigl( \{f, g\} \bigr) = S f - g$, $\{f, g\} \in S^*_\mathfrak B$, and show that it is a partial isometry onto $\mathfrak B_1$ with null space ${\ker}T=S_\mathfrak B$. The last equality is easy to verify. That
$\ran T = \mathfrak B_1$  follows from \eqref{eqB0S*} as it implies (with $w\in \nC$ and $x \in \nC^d$):
\[
\bigl(T\bigl(\{K(\cdot,w)x,w^*K(\cdot,w)x\}\bigr)\bigr)(z) = (z - w^*)K(z,w)x = -\iu L_1(z,w)x.
\]
That $T$ is isometric follows from \eqref{eqB0S*}, the symmetry of $S_\mathfrak B$ and the equalities (with $w, v \in \nC$ and $x,y \in \nC^d$):
 {\allowdisplaybreaks
\begin{align*}
 \bigl[\!\!\bigl[ \bigl\{K(\cdot,w)x, & w^*K(\cdot,w)x\bigr\},  \bigl\{K(\cdot,v)y,v^*K(\cdot,v)y\bigr \} \bigr]\!\!\bigr]   \\
&   =  -\iu \Bigl(
  \bigl[w^*K(\cdot,w)x,K(\cdot,v)y\bigr]_\fB - \bigl[K(\cdot,w)x,v^*K(\cdot,v)y\bigr]_\fB
   \Bigr)  \\
 &  =  \iu \bigl( v-w^* \bigr) y^*K(v,w)x \\
 &  =  y^*L_1(v,w)x \\
 &  =  \bigl[ -\iu L_1(\cdot,w)x, -\iu L_1(\cdot,v)y\bigr]_{\fB_1} \\
 & = \Bigl[ T\bigl(\{K(\cdot,w)x,w^*K(\cdot,w)x\}\bigr),  T\bigl(\{K(\cdot,v)y,v^*K(\cdot,v)y\}\bigr) \Bigr]_{\fB_1}\!.
\end{align*}}
The claim now follows because
$\bigl(S_\fB^*/S_\fB,\lip \bigr)$ is a
Pontryagin space with positive and negative index $l$ (see \cite[Theorem 2.3(c)]{acdJFA}) and $T$ establishes a unitary mapping between this space and $\mathfrak B_1$.

\medskip

\noindent {\bf (iv)} Let ${B}_1(z)$ be a $d\times 2 l$ matrix polynomial whose columns form a basis of $\fB_1$, and let $\V{Q}_1$ be the corresponding $2 l \times 2 l$ Gram matrix. Then $\V{Q}_1$ is self-adjoint and, by the claim proved in (iii), has $l$ positive and $l$ negative eigenvalues.  Let ${B}_2(z)$ be the $d \times 2(d-l)$ matrix polynomial defined by
\begin{equation*}
{B}_2(z) = W(z) \begin{bmatrix}
0 & 0 \\ I_{d-l} & I_{d-l}
\end{bmatrix},
\end{equation*}
where the zero matrices are of size $l \times (d-l)$. Define the $d \times 2d$ matrix polynomial $\mathbf P(z)$ by $\mathbf P(z) = \bigl[ B_1(z) \ {B}_2(z) \bigr]$ and the $2d \times 2d$ block diagonal matrix $\V{Q}$ by
\[
\V{Q} = \begin{bmatrix}
\V{Q}_1 & 0  \\[3pt]
0 & \V{Q}_2
\end{bmatrix}, \quad \text{where} \quad \V{Q}_2 = \begin{bmatrix}
0 & \iu I_{d-l} \\[3pt]  - \iu I_{d-l} & 0
\end{bmatrix}.
\]
Then $\V{Q}$ is self-adjoint and has $d$ positive and $d$ negative eigenvalues. We claim that
\begin{enumerate}[{\rm (I)}]
\item \label{itrankP}
${\rank}\mathbf P (z)=d$ for some $z \in \mathbb C$, and
\item \label{itkernel}
$\mathbf P(z)\V{Q}^{-1} \mathbf P(w)^*=\iu (z-w^*)K(z,w)$ for all $z, w \in \nC$.
\end{enumerate}
We prove \eqref{itrankP}: The inclusion $\mathfrak B_1 =T(S^*_\mathfrak B)\subseteq \mathfrak B+S\mathfrak B$ and \eqref{eqx0} imply that there exists an $l \times 2l$ matrix polynomial $X(z)$ such that
$$B_1(z)=W(z)\begin{bmatrix} X(z) \\ 0 \end{bmatrix},$$
where now $0$ stands for the $(d-l)\times 2l$ zero matrix. The complex number $\alpha$ satisfying
\eqref{tchpnkcb} belongs to the sets in
\eqref{eqrannulrank} and \eqref{eqdetrannul} and hence
 {\allowdisplaybreaks
 \begin{align*}
{\rank}X(\alpha)&={\rank}B_1(\alpha)\\
&={\dim}E_\alpha \mathfrak B_1\\
&={\dim}{\lspan}\bigl\{ L_1(\alpha,w) x\,:\,  w\in \nC, x \in \nC^d \bigr\} \\
& = {\dim}{\lspan}\bigl\{ \iu (\alpha - w^*) K(\alpha,w) x\,:\, w\in \nC, x \in \nC^d \bigr\} \\
 & = {\dim}{\lspan} \bigl\{ K(\alpha,w) y \,:\, w \in \nC, y \in \nC^d \bigr\} \\
 & ={\dim}E_\alpha \fB \\
 &={\rank}B (\alpha)\\
 &=l.
\end{align*}}
The equality
$$
\mathbf P (z)=W(z) \begin{bmatrix} X(z) & 0 & 0\\ 0 &I_{d-l} & I_{d-l} \end{bmatrix}
$$
implies that ${\rank}\mathbf P(\alpha)=d$. This proves \eqref{itrankP}.
We prove \eqref{itkernel}:
\begin{align*} \mathbf P(z)\V{Q}^{-1}\mathbf P(w)^*&=
B_1(z)\V{Q}_1^{-1}B_1(w)^*+B_2(z)\V{Q}_2^{-1}B_2(w)^*\\
&= L_1(z,w)+ W(z) \begin{bmatrix}
0 & 0 \\ I_{d-l} & I_{d-l}
\end{bmatrix} \!\! \begin{bmatrix}
0 & \iu I_{d-l} \\  - \iu I_{d-l} & 0
\end{bmatrix} \!\!
 \begin{bmatrix}
0 & I_{d-l} \\ 0 & I_{d-l}
\end{bmatrix} W(w)^{*}\\
&=\iu (z-w^*) K(z,w).
\end{align*}
Items \eqref{itrankP}
and \eqref{itkernel} show that $K(z,w)$ is a polynomial Nevanlinna kernel for $\mathfrak B$.
This completes the proof of the if statement.

\medskip

\noindent {\bf (v)}
If \eqref{tchpnkcb} holds for all $\alpha \in \mathbb C$, then, by Theorem~\ref{tchucs}, $W(z)$ is unimodular and
the proof of \eqref{itrankP} shows that then ${\rank}\mathbf P(z)=d$ for all $z \in \mathbb C$.
\end{proof}

\subsection{}
In the proof of the only if statements in
Theorem~{\rm~\ref{tchpnk}} we use the following lemma.

\begin{lemma}\label{llp}
Let $\V{Q}$ be a self-adjoint $2d \times 2d$ matrix with $d$ positive and $d$ negative eigenvalues. Let $\mathbf P(z)$ be a $d \times 2d$ matrix polynomial such that
\begin{enumerate}[{\rm (a)}]
\item \label{iti} $\mathbf P(z)\V{Q}^{-1}\mathbf P(z^*)^*=0$ for all $z \in \mathbb C$,
\item \label{itii} ${\rank}\mathbf P(z)=d$ for all $z \in \mathbb C$,  and
\item \label{itiii} $\mathbf P(z)$ is row reduced and has row degrees $\sigma_1, \ldots, \sigma_d$, assumed ordered so that $\sigma_1\geq \cdots \geq \sigma_d$ and $\sigma_1={\deg}\mathbf P(z)=: p$.
\end{enumerate}
Equip $\mathbb C^{2d}[z]_{<p}$  with the inner product
 $$
[f,g]_\V{Q} =\sum_{j=0}^{p-1}  b_{p-1-j}^*\V{Q}^{-1} a_j,
\quad
f(z)=\sum_{j=0}^{p-1}a_jz^j,\ g(z)=\sum_{j=0}^{p-1}b_jz^j,\
a_j, b_j \in \nC^{2d},
 $$
and consider the following subspace of $\nC^{2d}[z]_{<p}:$
\begin{equation*}
\fL_p = \lspan \Biggl\{ \sum_{k=0}^{p-1} z^{p-1-k} w^{*k} {\mathbf P}(w)^*x \, :  \, w \in \nC, \ x \in \nC^d \Biggr\}.
\end{equation*}
Then the orthogonal complement of $\fL_p$ in $(\mathbb C^{2d}[z]_{<p}, \kip_{\V{Q}})$ is
\begin{equation} \label{eqNp}
\fL_p^{\perp}= \Bigl\{ f(z) \in \nC^{2d}[z]_{<p} \,:\, f(z) = {\mathbf P}(z^*)^* u(z) \ \text{with} \ u(z) \in \nC^{d}[z] \Bigr\}.
\end{equation}
It is the isotropic part of $\fL_p$ and
$\fL_p\text{\Large $/$}\!\fL_p^{\perp}$
is a Pontryagin space with positive and negative index $\sigma_1+\cdots+\sigma_d$.
\end{lemma}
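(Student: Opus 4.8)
The statement has three parts: the description \eqref{eqNp} of $\fL_p^{\perp}$; the claim that $\fL_p^{\perp}$ is the isotropic part of $\fL_p$; and the index count for $\fL_p/\fL_p^{\perp}$. My strategy is to get \eqref{eqNp} first by a direct bilinear-pairing computation, then to invoke the row-reduced hypothesis (iii) and Theorem~\ref{tforney}, specifically the predictable degree property \eqref{irr4}, to pin down the dimension of $\fL_p^{\perp}$, and finally to feed everything into Lemma~\ref{lfdPs} to harvest the index count.

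First I would compute, for $w, v \in \nC$ and $x, y \in \nC^d$, the $\kip_{\V{Q}}$-inner product of the spanning element $g_w(z) := \sum_{k=0}^{p-1} z^{p-1-k} w^{*k} \mathbf P(w)^* x$ of $\fL_p$ with an arbitrary $f(z) = \sum_j a_j z^j$ in $\nC^{2d}[z]_{<p}$. By the definition of $\kip_{\V{Q}}$, the coefficient of $z^{p-1-j}$ in $g_w$ is $w^{*j} \mathbf P(w)^* x$, so $[f, g_w]_{\V{Q}} = \sum_{j=0}^{p-1} (w^{*j}\mathbf P(w)^* x)^* \V{Q}^{-1} a_j = x^* \mathbf P(w) \V{Q}^{-1} \sum_j w^j a_j = x^* \mathbf P(w)\V{Q}^{-1} f(w)$. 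Hence $f \in \fL_p^{\perp}$ if and only if $\mathbf P(w)\V{Q}^{-1} f(w) = 0$ for all $w \in \nC$. Now I would check the two inclusions in \eqref{eqNp}. If $f(z) = \mathbf P(z^*)^* u(z)$, then $\mathbf P(w)\V{Q}^{-1} f(w) = \mathbf P(w)\V{Q}^{-1}\mathbf P(w^*)^* u(w) = 0$ by hypothesis (a); the only subtlety is that $f$ must actually lie in $\nC^{2d}[z]_{<p}$, which is where the predictable degree property \eqref{irr4} of $\mathbf P(z^*)^*$ enters — it controls exactly when $\mathbf P(z^*)^* u(z)$ has degree $< p$. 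For the reverse inclusion, hypothesis (b) says $\rank \mathbf P(w) = d$ for all $w$, so by Lemma~\ref{tief} $\mathbf P(w)$ has a polynomial right inverse $\mathbf S(w)$ with $\mathbf P(w)\mathbf S(w) = I_d$; given $f$ with $\mathbf P(w)\V{Q}^{-1} f(w) \equiv 0$, set $u(z) := (\V{Q}\mathbf S(z^*))^* $ applied appropriately — more precisely one writes $\V{Q}^{-1} f(z)$ and uses that its image under $\mathbf P(z)$ vanishes identically, together with the structure of $\ker \mathbf P(z)$, to produce $u(z)$ with $f(z) = \mathbf P(z^*)^* u(z)$. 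This is the step I expect to be the main obstacle: extracting the polynomial $u(z)$ of the right degree from the identity $\mathbf P(z)\V{Q}^{-1}f(z) \equiv 0$ requires care with the Smith normal form / right-inverse bookkeeping and with the degree bookkeeping forced by \eqref{irr4}.

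Next I would count dimensions. The map $u(z) \mapsto \mathbf P(z^*)^* u(z)$ sends $\nC^d[z]$ into $\nC^{2d}[z]$, and by the predictable degree property its image intersected with $\nC^{2d}[z]_{<p}$ consists exactly of images of $u = [u_1 \cdots u_d]^\top$ with $\deg u_j < p - \sigma_j$ for each $j$. Since $\mathbf P(z^*)^*$ has full rank (by (b), rank is $d$ everywhere, so the map $u \mapsto \mathbf P(z^*)^* u$ is injective), this gives $\dim \fL_p^{\perp} = \sum_{j=1}^d (p - \sigma_j) = dp - (\sigma_1 + \cdots + \sigma_d)$. On the other hand $\dim \nC^{2d}[z]_{<p} = 2dp$, and $(\nC^{2d}[z]_{<p}, \kip_{\V{Q}})$ is a Pontryagin space with positive and negative index $dp$ each (its Gram matrix is a block-anti-triangular arrangement of copies of $\V{Q}^{-1}$, which inherits the signature of $\V{Q}$, i.e.\ $dp$ and $dp$).

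Finally I would verify the hypotheses of Lemma~\ref{lfdPs} with $\cK = \nC^{2d}[z]_{<p}$, $n = dp$, $\cL = \fL_p$. For that I need $\dim \fL_p = 2n - \tau$ with $\tau := \dim \fL_p^{\perp}$ and I need $\fL_p$ to contain a maximal neutral subspace of $\cK$. The first follows once I know $\fL_p^{\perp\perp} = \fL_p$ (automatic in a nondegenerate finite-dimensional inner product space) combined with $\dim \fL_p + \dim \fL_p^{\perp} = 2n = 2dp$, giving $\dim \fL_p = 2dp - \tau = dp + (\sigma_1 + \cdots + \sigma_d)$. For the neutrality condition: $\fL_p^{\perp} \subseteq \fL_p$ because every element $\mathbf P(z^*)^* u(z) \in \fL_p^{\perp}$ can be rewritten — using the residue/partial-fraction or the generating-function identity $\sum_{k=0}^{p-1} z^{p-1-k}w^{*k} = (z^p - w^{*p})/(z - w^*)$ — as a limit/combination of the spanning vectors $g_w$; alternatively one argues $\fL_p^{\perp}$ is neutral (its elements pair to zero among themselves by hypothesis (a): $[\mathbf P(z^*)^* u, \mathbf P(z^*)^* v]_{\V{Q}}$ unwinds to an expression in $\mathbf P(w)\V{Q}^{-1}\mathbf P(w^*)^* = 0$) and is contained in $\fL_p^{\perp\perp} = \fL_p$, hence is a neutral subspace of $\fL_p$ of dimension $\tau = dp - (\sigma_1+\cdots+\sigma_d)$; checking it is maximal neutral in $\cK$ reduces to $\tau = n - (\text{something})$... more simply, once Lemma~\ref{lfdPs} applies it tells us $\fL_p^{\perp}$ is the isotropic part of $\fL_p$ and $\fL_p/\fL_p^{\perp}$ is a Pontryagin space with positive and negative index $n - \tau = dp - (dp - (\sigma_1+\cdots+\sigma_d)) = \sigma_1 + \cdots + \sigma_d$, which is exactly the asserted conclusion. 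So the endgame is: establish \eqref{eqNp}; observe $\fL_p^{\perp}$ is neutral and $\fL_p^{\perp} \subseteq \fL_p$; deduce $\fL_p$ contains the maximal neutral subspace $\fL_p^{\perp}$ (maximality because $\dim\fL_p^{\perp} = dp - \sum\sigma_j$... here I would double-check that a neutral subspace of $\cK$ has dimension at most $n=dp$ and that $\fL_p^{\perp}$ being the radical of $\fL_p$ together with the index computation forces maximality, or simply cite that the radical of $\fL_p$ plus half its "definite part" realizes a maximal neutral subspace of $\cK$ once the indices match up); then apply Lemma~\ref{lfdPs}.
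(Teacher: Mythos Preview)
Your derivation of \eqref{eqNp} and the dimension count $\dim\fL_p^{\perp}=dp-\sum\sigma_j$ via the predictable degree property match the paper's argument. The gap is at the end: to invoke Lemma~\ref{lfdPs} you need $\fL_p$ to contain a \emph{maximal} neutral subspace of $\cK=\nC^{2d}[z]_{<p}$, i.e.\ one of dimension $dp$. But $\fL_p^{\perp}$ has dimension $dp-\sum_j\sigma_j$, which is strictly smaller unless all $\sigma_j=0$, so $\fL_p^{\perp}$ is \emph{not} maximal neutral and your hedged ``maximality because\ldots'' step cannot be completed as written; the circular alternative (``radical plus half the definite part'') presupposes the signature you are trying to establish.

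The fix is short and makes your route work: once you have checked that $\fL_p^{\perp}$ is neutral (this is the coefficient identity $\sum_{j+k=N}P_j\V Q^{-1}P_k^*=0$ coming from (a), the same computation the paper performs for a different subspace), extend $\fL_p^{\perp}$ to a maximal neutral subspace $\cM$ of $\cK$. Any maximal neutral $\cM\supseteq\fL_p^{\perp}$ satisfies $\cM\subseteq\cM^{\perp}\subseteq(\fL_p^{\perp})^{\perp}=\fL_p$, so $\fL_p$ does contain a maximal neutral subspace and Lemma~\ref{lfdPs} applies with $\tau=dp-\sum\sigma_j$, giving both that $\fL_p^{\perp}$ is the isotropic part and that $\fL_p/\fL_p^{\perp}$ has indices $\sum\sigma_j$ each. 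Compared with the paper, this is a genuinely different and somewhat more economical endgame: the paper builds an explicit $dp$-dimensional neutral subspace $\mathfrak N=R\mathfrak H$, where $H$ takes $u(z)$ to the polynomial part of $\mathbf P(1/z^*)^*u(z)$ and $R$ is the reversal $f(z)\mapsto z^{p-1}f(1/z)$, and then verifies $\mathfrak N\subseteq\fL_p$ by a separate generating-function calculation. Your approach trades that explicit construction for the one-line observation that any maximal neutral extension of a neutral subspace lies in its orthogonal complement.
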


\begin{proof}
For an element
 $f(z)=\sum_{j=0}^{p-1}a_jz^j\in \nC^{2d}[z]_{<p}$ the following equivalences hold:
 {\allowdisplaybreaks
\begin{align*}
f(z) \in \fL_p^{\perp}
  &   \Leftrightarrow  \
  \Biggl(\sum_{k=0}^{p-1} w^{*k} a_k^*\Biggr) \V{Q}^{-1} {\mathbf P}(w)^* = 0 \quad \text{for all} \ w \in \nC,  \\
 &
   \Leftrightarrow  \
  {\mathbf P}(z) \V{Q}^{-1} f(z)= 0 \quad \text{for all} \ z \in \nC,    \\
  &
  \Leftrightarrow  \ f(z)={\mathbf P}(z^*)^*u_z
\quad \text{for some} \ u_z \in \nC^d \ \text{and all} \ z \in \nC.    \\
\end{align*}}
The last equivalence follows from \eqref{iti} and \eqref{itii}. To prove that the vector $u_z$ depends polynomially on $z$
we use that the Smith normal form \eqref{eqsnf} of ${\mathbf P}(z)$ is given by: ${\mathbf P}(z) = U(z) \bigl[I_d \ \ 0 \bigr] V(z)$, where $U(z)$ and $V(z)$ are unimodular matrices. Then
$$f(z)={\mathbf P}(z^*)^*u_z=
V(z^*)^*\begin{bmatrix} I_d \\ 0 \end{bmatrix}
U(z^*)^*u_z \Rightarrow
u_z =  V(z^*)^{-*} \begin{bmatrix} I_d & 0 \end{bmatrix}U(z^*)^{-*} f(z)$$
and the right hand side belongs to $\mathbb C^d[z]$. This proves \eqref{eqNp}.

Since ${\mathbf P}(z^*)^*$ has full rank for every $z\in\nC$, it acts as an injection on $\nC^{d}[z]$, therefore
\begin{equation} \label{eqSp1}
{\dim}\fL_p^{\perp}=\dim\bigl\{ u(z) \in \nC^{d}[z] \,:\, \deg \bigl({\mathbf P}(z^*)^* u(z)\bigr) < p  \bigr\}.
\end{equation}
The number on the right hand side can be expressed in terms of the Forney indices of ${\mathbf P}(z)$. Indeed, since ${\mathbf P}(z)$ is row reduced, it has the ``predictable degree property'' (see Theorem~\ref{tforney}):
\[
\deg \bigl(  {\mathbf P}(z^*)^* u(z) \bigr) = \max\bigl\{ \sigma_j + \deg u_j(z) \,:\, j \in \{1,\dots,d\} \bigr\}.
\]
Consequently, the space on the right hand side of in \eqref{eqSp1} equals
\[
\bigl\{ u(z) \in \nC^{d}[z] \,:\, \deg u_j(z)  < p - \sigma_j, \  j \in \{1,\dots,d\} \bigr\},
\]
whose dimension is
$dp - \bigl(\sigma_1 + \cdots + \sigma_d\bigr).$
Hence $\dim \fL_p^{\perp} = dp - \bigl(\sigma_1 + \cdots + \sigma_d\bigr)$ and
\begin{equation*} \label{eqdimLp}
\dim \fL_p = \dim \nC^{2d}[z]_{<p}-\dim \fL_p^{\perp} = dp + \bigl(\sigma_1 + \cdots + \sigma_d\bigr).
\end{equation*}
To prove the last two statements of the lemma we apply Lemma~\ref{lfdPs} with $n=dp$ and $\tau=dp-(\sigma_1+\cdots+\sigma_d)$. The assumptions about $\V{Q}$ in the lemma readily imply that $\mathbb C^{2d}[z]_{<p}$ is a $2dp$-dimensional Pontryagin space with negative index $dp$. It remains to
construct a maximal neutral subspace of $\mathbb C^{2d}[z]_{<p}$ which is contained in $\fL_p$. We begin with the subspace ${\mathfrak H}={\ran}H$, where the  operator
$
H : \nC^{d}[z] \to \nC^{2d}[z]
$
maps $u(z) \in \nC^d[z]$ into the polynomial part of ${\mathbf P}\bigl({1}/{z^*}\bigr)^*u(z)$.
For example, if ${\mathbf P}(z)$ is written as:
\[
{\mathbf P}(z) = P_0 + z P_1 + \cdots + z^p P_p,
\]
then for $k \in \{0\}\cup\nN$ and  $x\in\nC^d$
\begin{equation*} 
{H}\bigl(z^k x\bigr) = \begin{cases}
 \displaystyle
 \sum_{j=0}^k z^{j}P_{k-j}^*x \  & \text{if} \quad  k < p, \\[18pt]
 \displaystyle z^{k-p} \sum_{j=0}^p z^{j}P_{p-j}^*x  \  & \text{if} \quad  p \leq k.
 \end{cases}
\end{equation*}
These formulas imply that $\mathfrak H$ is neutral
in $\bigl(\nC^{2d}[z]_{<p},\kip_{\V{Q}}\bigr)$.
Indeed, for $k, m \in \{0\}\cup\nN$ and  $x,y\in\nC^d$ we have
\[
\Bigl[{H}\bigl(z^{k} x \bigr), {H}\bigl(z^{m} y \bigr) \Bigr]_{\V{Q}}
 \!= \!\! \sum_{j = 0}^{\min\{k,m\}} y^* P_{p-1-m + j}\V{Q}^{-1} P_{k-j}^*x
  = \!\!\! \sum_{\substack{i+j=p-1-m+k\\i,j\in\{0,\ldots,p\}}}\!\! y^* P_{i}\V{Q}^{-1} P_{j}^*x
\]
and the last expression equals $0$ because the assumption \eqref{iti} is equivalent to
\begin{equation*} 
\sum_{\substack{j+k=n\\
j,k\in\{0,\ldots,p\}}} P_j\V{Q}^{-1} P_k^* = 0 \quad \text{for all} \quad n \in \{0,\ldots,2p\}.
\end{equation*}
Since, by \eqref{itii}, $P_0 = {\mathbf P}(0)$ has full rank, ${H}$ is degree preserving and hence injective.  Therefore, $\dim \mathfrak H = dp=(1/2)
\dim \bigl(\nC^{2d}[z]_{<p}\bigr)$ and $\mathfrak H$ is maximal neutral.

Define the mapping $R:\nC^{2d}[z]_{<p}\to\nC^{2d}[z]_{<p}$ by $(Rf)(z) = z^{p-1}f(1/z)$. Then $R$ is unitary with respect to $\kip_{\V{Q}}$ and hence $\mathfrak N := R\mathfrak H$ is also a maximal neutral subspace of $\bigl(\nC^{2d}[z]_{<p},\kip_{\V{Q}}\bigr)$. The proof of the lemma is complete if we show that $\mathfrak N \subseteq \fL_p$. For that we  consider the polynomials of the form
\begin{equation} \label{eqsps}
\sum_{k=0}^{2p-1} z^k w^{*k} x, \quad w \in \nC, \ x \in \nC^d.
\end{equation}
From
\[
{\mathbf P}\biggl(\frac{1}{z^*}\biggr)^* \left(\sum_{k=0}^{2p-1} z^k w^{*k} x \right) = \sum_{j=0}^{2p-1} \left(\frac{1}{z^j} \sum_{k=0}^{2p-1}z^k w^{*k} \right) P_j^* x
\]
and
\[
\frac{1}{z^j} \sum_{k=0}^{2p-1}z^k w^{*k} = \sum_{k=0}^{j-1} \frac{w^{*k}}{z^{j-k}} + w^{*j} \sum_{k=0}^{p-1}z^k w^{*k} + \sum_{k=p}^{2p-1-j}z^k w^{*(k+j)}
\]
we obtain
\begin{equation} \label{eqspf}
{H} \Biggl(\sum_{k=0}^{2p-1} z^k w^{*k} x\Biggr) = \sum_{k=0}^{p-1} z^k w^{*k} {\mathbf P}(w)^*x \ + \
  \text{higher  order terms}.
\end{equation}
Since the space $\nC^{d}[z]_{<2p}$ is spanned by polynomials in \eqref{eqsps}, each element of $\nC^{d}[z]_{<p}$ is also a sum of polynomials in \eqref{eqsps}. As  ${H}$ is degree preserving, the polynomials in $\mathfrak H = {H}\bigl( \nC^{d}[z]_{<p}\bigr)$ have degrees $<p$ and therefore they have the form \eqref{eqspf} with zero higher order terms. Thus
\[
 \mathfrak H \subset \lspan \left\{\sum_{k=0}^{p-1} z^k w^{*k} {\mathbf P}(w)^*x \,:\, \ w\in \nC, x \in \nC^d\right\}
\]
and $\mathfrak N=R \mathfrak H \subseteq \fL_p$. This proves Lemma~\ref{llp}.
\end{proof}

\begin{proof}[Proof of the only if statements in Theorem~{\rm~\ref{tchpnk}}]
Assume that the reproducing kernel of $\fB$ is a polynomial Nevanlinna kernel $K(z,w)$:
\begin{equation} \label{eqPneut}
\iu \, (z-w^{*}) K(z,w)  = {\mathbf P}(z) \V{Q}^{-1} {\mathbf P}(w)^*  \quad \text{for all} \quad z, w \in \nC,
\end{equation}
where $\V{Q}$ is a self-adjoint $2d \times 2d$ matrix with $d$ positive and $d$ negative eigenvalues and ${\mathbf P}(z)$ is a $d \times 2d$ matrix polynomial with ${\rank}\mathbf P(z)=d$ for some $z \in \mathbb C$.
Note that \eqref{eqPneut} implies \eqref{iti} of Lemma~\ref{llp}:
\begin{equation}\label{eqPneu}
\mathbf P(z)\V{Q}^{-1}\mathbf P(z^*)^*=0 \ \text{ for all}\ z \in \mathbb C.
\end{equation}
We prove \eqref{tchpnkca} and \eqref{tchpnkcb} in the steps (i)--(iv), in step (v) we prove the last only if statement in the theorem.

\medskip

\noindent {\bf
(i)} In this step we prove \eqref{tchpnkcb} under the assumption that \eqref{itii} and \eqref{itiii} of Lemma~\ref{llp} hold. Denote by $\mathfrak B_p$ the reproducing kernel Pontryagin space with kernel
$$L_p(z,w)  = \iu\, (z^{p}-w^{*p}) K(z,w)
 = \Biggl( \sum_{k=0}^{p-1} z^{p-1-k} w^{*k} \Biggr) {\mathbf P}(z) \V{Q}^{-1} {\mathbf P}(w)^*, \quad z,w \in \nC.
$$
Then
\begin{equation*}
\fB_p = \lspan \Biggl\{ {\mathbf P}(z) \V{Q}^{-1} \sum_{k=0}^{p-1} z^{p-1-k} w^{*k} {\mathbf P}(w)^*x \,:\,   w \in \nC, \ x \in \nC^d \Biggr\}
\end{equation*}
and
\[
\bigl[ {\mathbf P}(z) \V{Q}^{-1}f,{\mathbf P}(z) \V{Q}^{-1}g \bigr]_{\fB_p} = \sum_{k=0}^{p-1} \Bigl( v^{*(p-1-k)} {\mathbf P}(v)^* y\Bigr)^* \V{Q}^{-1} \Bigl( w^{*k} {\mathbf P}(w)^*x \Bigr),
\]
where
$$
f(z) = \sum_{k=0}^{p-1} z^{p-1-k} w^{*k} {\mathbf P}(w)^*x, \quad
 g(z) = \sum_{k=0}^{p-1} z^{p-1-k} v^{*k} {\mathbf P}(v)^*y.
$$

Comparing this inner product with the one defined in Lemma~\ref{llp}, we find that ${\mathbf P}(z) \V{Q}^{-1}$ considered as a multiplication operator maps
$\fL_p \subset \nC^{2d}[z]_{<p}$ isometrically onto $\fB_p$ and its null space is $\fL_p^\perp $ (see the second of the three equivalences in the beginning of the proof of Lemma~\ref{llp}). Hence, $\dim \fB_p = 2(\sigma_1+\cdots+\sigma_d)$ and the positive and the negative index of $\fB_p$ equal $\sigma_1+\cdots+\sigma_d$. According to Lemma~\ref{ldouble}, we have ${\dim}\fB =\sigma_1+\cdots+\sigma_d$.
The space $\fB$ is spanned by the columns of  $K(z,w), w\in\nC$ and for $j \in \{1,\ldots,d\}$ the degree of the $j$-th row of $K(z,w)$ as a polynomial in $z$ is equal to $\max \bigl\{ 0, \sigma_j-1\bigr\}$. Therefore
 $
\fB \subseteq \bigoplus_{j=1}^d \bigl( \nC[z]_{< \sigma_j} \bigr)e_{d,k}
 $.
Since both spaces have dimension $\sigma_1+\cdots+\sigma_d$, equality prevails:
\begin{equation} \label{eqBcan}
\fB_0 = \bigoplus_{j=1}^d \bigl(\nC[z]_{< \sigma_j}\bigr) e_{d,k}.
\end{equation}
This implies \eqref{tchpnkcb}.

\medskip

\noindent {\bf
(ii)} In this step we prove \eqref{tchpnkca} under the assumption that \eqref{itii} and \eqref{itiii} of Lemma~\ref{llp} hold. Set
\[
M(z) = \begin{bmatrix}
 z^{-\sigma_1} & \cdots & 0 \\
 \vdots & \ddots & \vdots  \\
  0 & \cdots & z^{-\sigma_d} \end{bmatrix}.
\]
Then ${\mathbf P}_\infty = \lim_{z\to \infty} M(z) {\mathbf P}(z)$  and by \eqref{eqPneu} we have
\begin{equation}\label{eq*}
{\mathbf P}_\infty\V{Q}^{-1} {\mathbf P}_\infty^* = \lim_{z\to \infty} M(z)  {\mathbf P}(z)\V{Q}^{-1}{\mathbf P}(z^*)^* D(z^*)^* =0.
\end{equation}
Since ${\mathbf P}_\infty$ has full rank, \eqref{eq*} implies that the linear span of the columns of ${\mathbf P}_\infty^*$ is a maximal neutral subspace of $\bigl(\nC^{2d},\kip_{\V{Q}}\bigr)$ and this span coincides with the null space of ${\mathbf P}_\infty\V{Q}^{-1}$.
We claim that for $a\in \nC^{2d}$
\begin{equation}\label{eq}
{\mathbf P}(z)a \in \fB \quad \Leftrightarrow \quad {\mathbf P}_\infty a=0.
\end{equation}
To prove the claim assume first that ${\mathbf P}(z)a \in \fB$. From \eqref{eqBcan} we see that the degree of the $j$-th entry of the vector polynomial ${\mathbf P}(z)a$ is strictly less than $\sigma_j, j\in\{1,\ldots,d\}$. Hence
$
{\mathbf P}_\infty a = \lim_{z\to \infty} M(z) \bigl( {\mathbf P}(z) a \bigr) = 0.
$
As to the converse, first notice that by the definition of ${\mathbf P}_\infty$ the row degrees of the matrix polynomial
$
{\mathbf P}_0(z) = {\mathbf P}(z) - M(z)^{-1} {\mathbf P}_\infty
$
are strictly less than $\sigma_j, j\in\{1.\ldots,d\}$. By \eqref{eqBcan} we have that ${\mathbf P}_0(z) a \in \fB$ for all $a \in \nC^{2d}$.
Now assume ${\mathbf P}_\infty a=0$. Then
\[
{\mathbf P}(z)a = {\mathbf P}_0(z)a + M(z)^{-1} {\mathbf P}_\infty a = {\mathbf P}_0(z)a \in \fB.
\]
This completes the proof of \eqref{eq}.

Consider $f \in \fB$. Since $\fB$ is finite dimensional it can be written as
\begin{equation} \label{eqfinB}
f(z) = \sum_{i=1}^m K(z,w_i)x_i, \quad m \in \nN, \   w_i \in \nC, \  x_i \in \nC^d,  \  i \in \{1,\ldots,m\}.
\end{equation}
The next sequence of equivalences follows from
\eqref{eq} and the observation after \eqref{eq*}:
{\allowdisplaybreaks
\begin{align*}
f \in \dom S_\fB & \quad \Leftrightarrow \quad  Sf \in \fB \\
 & \quad \Leftrightarrow \quad \sum_{i=1}^m (z-w_i^*) K(z,w_i) x_i \in \fB \\
 & \quad \Leftrightarrow \quad
 {\mathbf P}(z) \V{Q}^{-1}
  \Biggl( \sum_{i=1}^m  {\mathbf P}(w_i)^* x_i \Biggr) \in \fB \\
 & \quad \Leftrightarrow \quad
 {\mathbf P}_\infty \V{Q}^{-1}
  \Biggl( \sum_{i=1}^m  {\mathbf P}(w_i)^* x_i \Biggr) = 0\\
   & \quad \Leftrightarrow \quad
  \sum_{i=1}^m  {\mathbf P}(w_i)^* x_i  = {\mathbf P}_\infty^* x \quad \text{for some} \quad x \in \nC^d.
\end{align*}}
Let $f \in \fB$ be given by \eqref{eqfinB} and let $g \in \fB$ be of the form
\begin{equation*}
g(z) = \sum_{j=1}^n K(z,v_j)y_j, \quad n \in \nN, \  v_j \in \nC, \  y_j \in \nC^d, \ j \in \{1,\ldots,n\}.
\end{equation*}
Assume that $f, g \in \dom S_\fB$. Then there exist $x, y \in \nC^d$ such that
\[
\sum_{i=1}^m  {\mathbf P}(w_i)^* x_i  = {\mathbf P}_\infty^* x \quad \text{and} \quad \sum_{i=1}^m  {\mathbf P}(v_i)^* y_i  = {\mathbf P}_\infty^* y
\]
and using the reproducing kernel property of $K(z,w)$ we have
{\allowdisplaybreaks
\begin{align*}
\bigl[Sf,g\bigr]_\fB - \bigl[f,Sg\bigr]_\fB & = \sum_{j=1}^n \sum_{i=1}^m v_j y_j^* K(v_j,w_i) x_i - \left(\sum_{i=1}^m \sum_{j=1}^n w_i x_i^* K(w_i,v_j)y_j\right)^*\\
& = \sum_{j=1}^n \sum_{i=1}^m v_j y_j^* K(v_j,w_i) x_i - \sum_{i=1}^m \sum_{j=1}^n w_i^* y_j^* K(v_j,w_i)x_i\\
& = \sum_{j=1}^n \sum_{i=1}^m \bigl(v_j-w_i^*\bigr) y_j^* K(v_j,w_i) x_i\\
& = -\iu \, \sum_{j=1}^n \sum_{i=1}^m  y_j^* {\mathbf P}(v_j)\V{Q}^{-1}{\mathbf P}(w_i)^* x_i\\
& = -\iu \, \Biggl(\sum_{j=1}^n  y_j^* {\mathbf P}(v_j)\Biggr) \V{Q}^{-1} \Biggl( \sum_{i=1}^m {\mathbf P}(w_i)^* x_i \Biggr)\\
& = -\iu \, y^* {\mathbf P}_\infty \V{Q}^{-1} {\mathbf P}_\infty^* x \\
& = 0.
\end{align*}}
This proves that $S_\fB$ is symmetric.

\medskip

\noindent {\bf
(iii)} In this step we only assume \eqref{itii} of Lemma~\ref{llp}: ${\rank}\mathbf P(z)=d$ for all $z \in \mathbb C$.
Then there is a unimodular $d\times d$ matrix polynomial $U(z)$ such that ${\mathbf S}(z) = U(z){\mathbf P}(z)$ is row reduced with ordered row degrees $\sigma_1 \geq \cdots \geq \sigma_d$. Then $U$ is an isometry from $\fB$ onto the reproducing kernel Pontryagin space $\fC$ with kernel
\begin{equation}\label{eqSQS}
- \iu \, \dfrac{{\mathbf S}(z) \V{Q}^{-1} {\mathbf S}(w)^*}{z-w^*}.
\end{equation}
According to what has already been proved in (i)
\[
U\fB = \fC = \bigoplus_{j=1}^d \bigl(\nC[z]_{< \sigma_j}\bigr) e_{d,k}.
\]
Thus $\fB = U^{-1}\fC$ and, by Theorem~\ref{tchucs},
\eqref{tchpnkcb} holds for all $\alpha \in \mathbb C$. According to part (ii) of this proof, $S_{U\fB}$ is symmetric, hence $S_\fB = U^{-1} S_{U\fB} U$ is also symmetric, that is,
\eqref{tchpnkca} holds.

\medskip

\noindent {\bf
(iv)} Finally we prove that (\ref{tchpnkca}) and (\ref{tchpnkcb}) hold if ${\rank}\mathbf P(z)=d$ for some $z \in \nC$ as in the beginning of this proof. In that case there exist a $d\times d$ matrix polynomial $G(z)$ with ${\det}G(z) \not \equiv 0$ and a $d\times 2d$ matrix polynomial ${\mathbf S}(z)$ with ${\rank}\mathbf S(z)=d$ for all $z \in \mathbb C$ such that
${\mathbf P}(z) = G(z) {\mathbf S}(z)$ for all $z \in \nC$, see Lemma~\ref{tief}. If by $\fA$  we denote the reproducing kernel space with Nevanlinna kernel \eqref{eqSQS}, then, by what has been proved in (iii), the operator $S_\fA$ is symmetric and for almost all $\alpha \in \nC$ we have $\ran(S_\fA - \alpha) = \fA \cap \ker E_\alpha$.  Now (\ref{tchpnkca}) and (\ref{tchpnkcb}) follow since the multiplication operator $G$ corresponding to $G(z)$ is an isomorphism from $\fA$ onto~$\fB$.

\medskip

\noindent {\bf(v)} The last only if statement in the theorem follows from step (iii) above and  Theorem~\ref{tchucs}.
\end{proof}

\begin{remark} \label{rlast}
Assume that $\fB \subset \nC^d[z]$ is a Pontryagin space which satisfies the conditions (\ref{tchpnkca}) and~(\ref{tchpnkcb}) of Theorem~\ref{tchpnk}. Then,  by Theorem~\ref{tchpnk}, there is a generalized Nevanlinna pair $\{M(z), N(z)\}$ such that the $d\times 2d$ matrix polynomial ${\mathbf P}(z)=[M(z) N(z)]$ provides a representation \eqref{eqpnkg}, with $\V{Q}$ given by \eqref{eqpnkgsQ}, for the Nevanlinna reproducing kernel $K(z,w)$ of $\mathfrak B$. In addition, by Theorem~\ref{tchucsg} there is a canonical subspace  $\mathfrak C$ such that  ${\mathfrak B} = W {\mathfrak C}$ for some $d \times d$ matrix polynomial $W(z)$ with ${\det}W(z)\not \equiv 0$. The proof of Theorem~\ref{tchpnk} and Lemma~\ref{tief} show  that the multiset of the Forney  indices of ${\mathbf P}(z)$  coincides with the multiset of the degrees of ${\mathfrak C}$.  This implies that the Forney indices are independent of the Nevanlinna representation \eqref{eqpnkg} of the kernel $K(z,w)$. In the special case when the defect numbers of $S_\mathfrak B$ are equal to $d$ this fact can also be proved directly by using \cite[Theorem~1.3]{gs}. In view of Remark~\ref{remdefect}, this remark substantiates the observations about the Forney indices and the defect numbers after Theorem~\ref{tchucsg} in the Introduction.
\end{remark}

\section{$Q$-functions}\label{s5}
\subsection{}
Let $M(z)$ be a generalized Nevanlinna $d \times d$ matrix function and denote by $\mathcal L(M)$ the reproducing kernel Pontryagin space with reproducing kernel $K_M(z,w)=K_{M, I_d}(z,w)$. By \cite[Theorem 2.1]{dlls}, the operator $S$ in $\mathcal L(M)$ of multiplication by the independent variable is a simple symmetric operator with equal defect numbers and its adjoint is given by
 \begin{align*}
 S^* & =
\overline{\lspan}\big\{\{K_M(\,\cdot\,,w^*) x, wK_M(\,\cdot\,,w^*) x\}\, :\, x \in
\mathbb C ^d, w \in {\hol}(M)\big\}\\[2mm]
& =\big\{\{f,g\} \in {\mathcal
L}(M)^2\,:\, \exists\, x,y \in \mathbb C ^d \ \text{such that}\ g(z)-z
f(z)\equiv x -M(z)y\big\} .
\end{align*}
It follows that for all $w \in {\hol}(M)$
$$
\ker(S^*-w) = \bigl\{ K_M(\,\cdot\,,w^*)x\,:\,x \in \mathbb C^d \bigr\} = \ran  E_w^*,
$$
where $E_w$ is considered as a mapping $E_w:\mathcal L(M) \rightarrow \mathbb C^d$.  Taking orthogonal complements we see that
 $$
\ran (S-\alpha) = {\mathcal L}(M) \cap \ker E_\alpha, \quad \alpha \in {\hol}(M).
 $$
Thus (\ref{tchpnkca}) and (\ref{tchpnkcb}) of Theorem \ref{tchpnk} hold.  Moreover,  \cite[Theorem 2.1]{dlls} and its proof imply that there is a constant invertible $d \times d$ matrix $T$ such that
 $$
T M(z)T^ *= M_0 + \begin{bmatrix} \widehat M(z) & 0 \\0 & 0 \end{bmatrix},
 $$
where $M_0$ is a constant self-adjoint $d \times d$ matrix and, if the defect numbers of $S$ are denoted by $l$, $\widehat M(z)$ is a generalized Nevanlinna $l \times l$ matrix function which is a $Q$-function for $S$.
The theorem below concerns a converse implication. But first we recall the notion of a $Q$-function.

\medskip

Let $S$ be a simple symmetric operator in a Pontryagin space $\cK$ with defect numbers equal to $l$. Let $A$ be a self-adjoint extension of $S$ in $\cK$ with a nonempty resolvent set $\rho(A)$. Let $\mu \in \rho(A)\setminus \mathbb R$ and define a function $\Gamma_\mu:\mathbb C^l \rightarrow \cK$ such that it is a linear bijection from $\mathbb C^l$ onto ${\ker}(S^*-\mu)$. Finally, for $z \in \rho(A)$ define the {\it defect mappings} $\Gamma_z:\mathbb C^l \rightarrow \cK$ by
$$
\Gamma_z=\left(I+(z-\mu)(A-z)^{-1}\right)\Gamma_\mu, \quad z \in \rho(A).
$$
Then $\Gamma_z$ is a bijection from $\mathbb C^l$ onto
${\ker}(S^*-z)$,
\begin{equation}\label{closedspan}
\cK=\overline{\lspan}\{\Gamma_z \mathbf c \,:\,
z \in \rho(A)\cap (\mathbb C\setminus \mathbb R), \mathbf c \in \mathbb C^l\}
\end{equation}
and,  by the resolvent identity, $\Gamma_w^*\Gamma_z=\Gamma_{z^*}^*\Gamma_{w^*}$, $w,z \in \rho(A)$. A {\it $Q$-function for} $S$ is by definition an $l \times l$ matrix function that satisfies the equation
\begin{equation}\label{Qkernel}
\dfrac{Q(z)-Q(w)^*}{z-w^*}=\Gamma_w^*\Gamma_z, \quad z,w \in \rho(A).
\end{equation}
Clearly, $Q(z)$ depends on the choice of the pair $\{A,\Gamma_z\}$ and if this choice has to be mentioned explicitly we shall say that $Q(z)$ is a $Q$-function for $S$ associated with the pair $\{A,\Gamma_z\}$. $Q(z)$ is uniquely determined up to an additive constant self-adjoint  $d \times d$ matrix $Q_0$:
$$Q(z)=Q_0-i{\im}\mu\,\Gamma^*_\mu\Gamma_\mu+
(z-\mu^*)\Gamma_\mu^*\Gamma_z, \quad Q_0=Q_0^*.
$$
From \eqref{closedspan} and the defining relation \eqref{Qkernel}
it follows that $Q(z)$ is a generalized Nevanlinna $l \times l$ matrix function with $\kappa$ negative squares where $\kappa$ is the negative index of the Pontryagin space $\cK$; in particular $Q(z)^*=Q(z^*)$. $Q$-functions in an indefinite setting were introduced and studied by M.G.~Krein and H.~Langer in \cite{kl1971, kl1973}.

 \medskip

\subsection{} The following theorem shows that the Nevanlinna pair $\{M(z), N(z)\}$ of matrix polynomials $M(z)$ and $N(z)$ in Theorem~\ref{tchpnk} can be chosen such that ${\det}N(z)\not \equiv 0$ and such that $N(z)^{-1}M(z)$ is essentially the $Q$-function for $S_\mathfrak B$. As before, by $\mathcal L(Q)$ we denote the reproducing kernel space with reproducing kernel given by \eqref{Qkernel}.

\begin{theorem}\label{tQ}
Let $\mathfrak B$ be a finite dimensional Pontryagin subspace of $\nC^d[z]$ for which the conditions \eqref{tchpnkca} and \eqref{tchpnkcb} of Theorem~{\rm~\ref{tchpnk}} hold. Denote by $l \in \{1,\ldots d\}$ the equal defect numbers of the symmetric operator $S_\mathfrak B$. Let $Q(z)$ be an $l \times l$ matrix $Q$-function for $S_\mathfrak B$. Then there is a $d \times d$ matrix polynomial $N(z)$ with ${\det}N(z)\not \equiv 0$ such that
$M(z)= N(z){\diag}( Q(z),  0)$ is a $d \times d$ matrix polynomial and
$\mathfrak B=N\left( \mathcal L(Q)\oplus \{0\}\right)$. In particular, $\{M(z),N(z)\}$ is a Nevanlinna pair of  matrix polynomials and $K_{M,N}(z,w)$ is the reproducing kernel of $\mathfrak B$.
\end{theorem}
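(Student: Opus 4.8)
\emph{The plan is} to exhibit $\fB$ as a polynomial multiple of the functional model $\mathcal L(Q)$, after clearing the denominators of $Q$. First I would use Remark~\ref{remdefect}: condition \eqref{tchpnkca} makes $S_\fB$ a simple symmetric operator with equal defect numbers $l\in\{1,\dots,d\}$, and Theorem~\ref{tchucsg} together with \eqref{eqmus} gives a $d\times d$ matrix polynomial $W(z)$ with $\det W(z)\not\equiv 0$ and a canonical subspace whose last $d-l$ degrees vanish, so that $\fB=W\bigl(\mathfrak C_l\oplus\{0\}\bigr)$ with $\mathfrak C_l\subseteq\nC^l[z]$ canonical of degrees $\mu_1\ge\cdots\ge\mu_l\ge 1$; I give $\mathfrak C_l\oplus\{0\}$ the inner product pulled back through $W$, so that multiplication by $W$ is a unitary operator onto $\fB$. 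On the model side, $\mathcal L(Q)\oplus\{0\}=\mathcal L\bigl(\diag(Q,0)\bigr)$, with reproducing kernel $\diag\bigl(K_Q(z,w),0\bigr)$. I then fix a scalar polynomial $q(z)$ with real coefficients and with $q(z)Q(z)$ a matrix polynomial; since $Q(w^*)=Q(w)^*$, the function $z\mapsto q(z)K_Q(z,w)$ is a polynomial, so $\mathcal L(Q)':=q\,\mathcal L(Q)$ is a space of $\nC^l$-valued polynomials and multiplication by $q$ is a unitary operator from $\mathcal L(Q)$ onto $\mathcal L(Q)'$ intertwining the two operators of multiplication by $z$. Using the description of that operator in $\mathcal L(Q)$ recalled at the beginning of Section~\ref{s5}, one checks that $\mathcal L(Q)'$ satisfies \eqref{tchpnkca} and \eqref{tchpnkcb}.

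\emph{Next} I would match the two canonical structures. By Krein--Langer theory a simple symmetric operator is determined up to unitary equivalence by any of its $Q$-functions, so, $Q$ being a $Q$-function of $S_\fB$ and of multiplication by $z$ in $\mathcal L(Q)$, the operators $S_\fB$, $S_{\mathcal L(Q)}$ and hence $S_{\mathcal L(Q)'}$ are unitarily equivalent. Applying Theorem~\ref{tchucsg} to $\mathcal L(Q)'\subseteq\nC^l[z]$ yields $\mathcal L(Q)'=W_l\,\mathfrak C_l'$ with $W_l(z)$ an $l\times l$ matrix polynomial, $\det W_l(z)\not\equiv 0$, and $\mathfrak C_l'\subseteq\nC^l[z]$ canonical; by Remark~\ref{rfim} the degrees of a canonical representative are read off from the numbers $\dim\dom(S^j)$, which are invariant under unitary equivalence, and for the indices $\le l$ that enter \eqref{eqfir} they coincide with those coming from $\mathfrak C_l$, whence $\mathfrak C_l'=\mathfrak C_l$. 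By \eqref{eqdetrannul} I may take $\det W_l(z)\ne 0$ for every $z\in\hol(Q)$. Combining, $\fB=W\bigl((W_l^{-1}q\,\mathcal L(Q))\oplus\{0\}\bigr)=N\bigl(\mathcal L(Q)\oplus\{0\}\bigr)$, where
\[
N(z):=W(z)\begin{bmatrix} W_l(z)^{-1}q(z) & 0\\[2pt] 0 & I_{d-l}\end{bmatrix}
\]
is a $d\times d$ rational matrix with $\det N(z)\not\equiv 0$ which, by construction, is a unitary multiplication operator from $\mathcal L(Q)\oplus\{0\}$ onto $\fB$.

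\emph{The hard part} is to show that $N$, hence also $N(z)\diag(Q(z),0)$, can be arranged to be a matrix polynomial; this is the technical heart of the argument. The only possible poles of $N$ lie among the zeros of $\det W_l$, hence among the poles of $Q$. At a point $z_0\in\hol(Q)$ all functions in $\mathcal L(Q)$ are holomorphic and their values at $z_0$ fill $\nC^l$; since $N$ maps $\mathcal L(Q)\oplus\{0\}$ into the polynomial space $\fB$, this forces $N$ to be holomorphic at $z_0$, so $N$ is holomorphic off the poles of $Q$. At a pole $z_0$ of $Q$ the scalar factor $q$ vanishes, and I would replace $q$ by a high enough power of the minimal symmetric denominator of $Q$ --- which only replaces $W_l$ by a scalar-polynomial multiple of itself and so leaves the previous steps intact --- so that the order of vanishing of $q$ at each such $z_0$ dominates the pole order of $W(z)W_l(z)^{-1}$ there. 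Then $N$ and $M(z):=N(z)\diag(Q(z),0)=W(z)\diag\bigl(W_l(z)^{-1}q(z)Q(z),\,0\bigr)$ have no poles and are matrix polynomials. Controlling these pole orders against the zeros supplied by $q$ is the step I expect to require the most care.

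\emph{Finally}, with $M(z)=N(z)\diag(Q(z),0)$ a $d\times d$ matrix polynomial: because $\diag(Q,0)$ is $*$-symmetric, $M(z)N(z^*)^*-N(z)M(z^*)^*=N(z)\bigl(\diag(Q(z),0)-\diag(Q(z),0)\bigr)N(z^*)^*=0$, and $\rank\bigl[M(z)\ N(z)\bigr]=\rank\bigl(N(z)\bigl[\diag(Q(z),0)\ I_d\bigr]\bigr)=d$ for all but finitely many $z$ since $\det N(z)\not\equiv 0$; thus $\{M(z),N(z)\}$ is a (generalized) Nevanlinna pair of matrix polynomials with $\det N(z)\not\equiv 0$, $M(z)=N(z)\diag(Q(z),0)$, and $\fB=N\bigl(\mathcal L(Q)\oplus\{0\}\bigr)$. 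Finally, since $N$ is a unitary multiplication operator onto $\fB$, the reproducing kernel of $\fB$ transforms as $K(z,w)=N(z)\diag\bigl(K_Q(z,w),0\bigr)N(w)^*$, and a direct computation identifies the right-hand side with $\bigl(M(z)N(w)^*-N(z)M(w)^*\bigr)/(z-w^*)=K_{M,N}(z,w)$, completing the proof.
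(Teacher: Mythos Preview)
Your argument has a genuine gap at the point where you assert that $N$ is ``by construction'' a unitary multiplication operator from $\mathcal L(Q)\oplus\{0\}$ onto $\fB$. You have arranged that $q:\mathcal L(Q)\to\mathcal L(Q)'$ is unitary and that $W:\mathfrak C_l\oplus\{0\}\to\fB$ is unitary, but for the middle factor $W_l^{-1}:\mathcal L(Q)'\to\mathfrak C_l$ you have only shown that it is a linear bijection. The Krein--Langer argument gives that $S_{\fB}$ and $S_{\mathcal L(Q)'}$ are unitarily equivalent and (via Remark~\ref{rfim}) that the canonical representatives $\mathfrak C_l$ and $\mathfrak C_l'$ coincide \emph{as linear spaces}; it does not say that the inner product on $\mathfrak C_l$ pulled back from $\fB$ through $W$ agrees with the one pulled back from $\mathcal L(Q)'$ through $W_l$, nor that the abstract unitary furnished by Krein--Langer is the particular multiplication operator $W_l^{-1}$. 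Without unitarity of $N$ the transformation rule for reproducing kernels fails, and you cannot conclude $K_{\fB}=K_{M,N}$. This is precisely the substance of the paper's proof: it constructs the unitary $U:\mathcal L(Q)\to\fB$ explicitly from the defect mappings via $\Gamma_{z^*}^*\Gamma_{w^*}x\mapsto\Gamma_{w^*}x$, and then shows that this $U$ is multiplication by the concrete matrix $N(z)=W(z)\bigl(\Gamma_{z^*}^*W\bigr)^{-1}$.

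Your polynomiality repair at poles of $Q$ also does not work. If you replace $q$ by $q^m$, then $q^m\mathcal L(Q)=q^{m-1}\mathcal L(Q)'=q^{m-1}W_l\mathfrak C_l$, so the natural new $W_l$ is $q^{m-1}W_l$, and $(q^{m-1}W_l)^{-1}q^m=W_l^{-1}q$: the matrix $N$ is unchanged, and no pole is removed. The paper handles polynomiality only after unitarity is in hand, deducing it directly from the kernel identity $(z-w^*)K(z,w)N(w)^{-*}=M(z)-N(z)Q(w)^*$ together with simplicity of $S_{\fB}$.
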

\begin{proof}
Assume \eqref{tchpnkca} and \eqref{tchpnkcb} of Theorem~\ref{tchpnk}.  By Theorem~\ref{tchucsg} there is a $d\times d$ matrix function $W(z)$ with ${\det}W(z)\not \equiv 0$ such  that $\mathfrak B=W \mathfrak C$, where $\mathfrak C$ is a canonical subspace of $\nC^d[z]$. By Remark~\ref{remdefect} the defect numbers of the symmetric operator $S_\mathfrak B$ are both equal to  $l$ with $l \leq d$. We consider two cases: $l=d$ and $l<d$.

\medskip

\noindent
{\bf (i)} $l=d$. Let $Q(z)$ be the $Q$-function for $S_\mathfrak B$ associated with the pair $\{A, \Gamma_z\}$, where $A$ is a self-adjoint extension of $S_\mathfrak B$ and the defect mappings $\Gamma_z$ are defined above with $l=d$. Since $S_\mathfrak B$ is simple, the mapping
 $$
f \mapsto g\ \text{with}\ f(z) = \Gamma_{z^*}^*\Gamma_{w^*}\,x, \quad g(z)=  \left(\Gamma_{w^*}x\right)(z) , \qquad x \in \mathbb C^d,\ w \in \rho(A),
 $$
can be extended by linearity to a unitary mapping $U$ from $\mathcal L(Q)$
onto $\mathfrak B$. That $U$ is isometric follows from
$$
\bigl[ \Gamma_w^*\Gamma_z\,x,
\Gamma_v^*\Gamma_z\,y \bigr]_{\mathcal L(Q)}
=y^*\Gamma_w^*\Gamma_v\,x=
y^*\Gamma_{v^*}^*\Gamma_{w^*}\,x
=[ \Gamma_{w^*}x, \Gamma_{v^*}y]_{\mathfrak B},
\quad x ,y  \in \mathbb C^d.
$$

We claim that $U$ is the operator of multiplication by a $d \times d$ matrix function. To prove the claim we use the equality $\mathfrak B=W \mathfrak C$.  Since the defect numbers of $S_{\mathfrak B}$ are equal to $d$, the degrees of $\mathfrak C$ are all $\geq 1$ (see Remark~\ref{remdefect}) and hence the $d$ columns of $W(z)$ belong to $\mathfrak B$ and are linearly independent over $\mathbb C$. We denote by $\Gamma_{z^*}^*W$ the $d \times d$ matrix function defined by
 $$
(\Gamma_{z^*}^*W)x=\Gamma_{z^*}^*(W x), \quad x \in \mathbb C^d, \ z \in \rho(A).
 $$
We show that its inverse exists for $z \in \Omega:=\rho(A)\cap \{z \in \mathbb C\,:\, {\det}W(z)\neq 0\}$. Suppose there is an $x \in \mathbb C^d$ such that
$\left(\Gamma_{z^*}^*W\right)  x=0$. Then for all $ y\in \mathbb C^d$
 $$
 0= \bigl[ \left(\Gamma_{z^*}^*W\right)  x, y \bigr]_{\mathbb C^d} =
 \bigl[ \Gamma_{z^*}^* (W x) ,  y \bigr]_{\mathbb C^d}=
 \bigl[W x, \Gamma_{z^*} y \bigr]_{\mathfrak B},
 $$
hence $W  x \in \ker(S^*_{\mathfrak B}-z^*)^\perp={\ran}(S_{\mathfrak B}-z)={\mathfrak B}\cap E_z$. That is, $W(z) x=0$ and it follows that $x=0$. This proves that $(\Gamma_{z^*}^*W)^{-1}$ is well defined for all $z \in \Omega$.
We set $N(z)=W(z)(\Gamma_{z^*}^*W)^{-1}$. Clearly, ${\det}N(z)\not \equiv 0$. We have shown
that $U$ coincides with multiplication by $N(z)$ if we have proved that
$$
N(z)\Gamma_{z^*}^*\Gamma_{w^*}x=
\left(\Gamma_{w^*}x\right)(z),\qquad x \in \mathbb C^d,\ w \in \rho(A),\ z \in \Omega,
$$
or, equivalently, that with $y(z,w, x):= W(z)^{-1}\left(\Gamma_{w^*}x\right)(z) \in \mathbb C^d$
$$
\Gamma_{z^*}^*\Gamma_{w^*}x
=\Gamma_{z^*}^*\left(W y(z,w,x)\right),
\qquad x \in \mathbb C^d,\ w \in \rho(A),\
z \in \Omega.
$$
But this equality holds, since $\Gamma_{z^*}^*\left({\ran}( S_{\mathfrak B}-z)\right)=\{0\}$ and
$$
\Gamma_{w^*}x-W y(z,w, x) \in {\mathfrak B} \cap E_z =
{\ran}(S_{\mathfrak B}-z).
$$
This completes the proof of the claim that $U$ is multiplication by $N(z)$.
It follows from \cite[Theorem 1.5.7]{adrs} and its proof that the formula for the kernel $K(z,w)$ of $\mathfrak B$ is given by
$$K(z,w)=N(z)\dfrac{Q(z)-Q(w)^*}{z-w^*}N(w)^*$$
and hence $\mathfrak B=N\mathcal L(Q)$.

It remains to show that $M(z)=N(z)Q(z)$ and $N(z)$ are matrix polynomials. Since the elements of the space $\mathfrak B$ are polynomials, the matrix function $z \mapsto K(z,w)$ is a matrix polynomial, hence the matrix function
 $$
M(z)-N(z)Q(w)^*=N(z)Q(z)-N(z)Q(w)^*
=(z-w^*)K(z,w)N(w)^{-*}
 $$
is a matrix polynomial in $z$. Thus if $N(z)$ is a matrix polynomial, then so is $M(z)$.
It remains to show that $N(z)$ is a polynomial. For this we note that the above formula implies that for $x \in \mathbb C^d$
 $$
N(z)\dfrac{Q(\mu^*)-Q(w)^*}{\mu^*-w^*}x=
\dfrac{(z-\mu^*)K(z,\mu)N(\mu)^{-*}-(z-w^*)
K(z,w)N(w)^{-*}}{\mu^*-w^*} \, x.
 $$
The right hand side is a matrix polynomial in $z$ and hence it follows from the equality that  $N(z)$ is a matrix polynomial if we can show that
 $$
{\mathbb C}^d
 = {\lspan} \bigl\{\Gamma_{\mu}^*\Gamma_{w^*}x\,:\, w\in \rho(A)\cap ({\mathbb C}\setminus {\mathbb R}),\ x \in {\mathbb C}^d \bigr\}.
 $$
To prove this equality we argue by contradiction and suppose it is not true. Then there is a nonzero vector $x \in {\mathbb C}^d$ orthogonal to the set on the right hand side, that is,
 $$
 \bigl[ \Gamma_{w^*}y, \Gamma_{\mu} x \bigr]_{\mathfrak B}=0, \quad
w\in \rho(A)\cap ({\mathbb C}\setminus {\mathbb R}),\ y \in {\mathbb C}^d.
 $$
Since $S_{\mathfrak B}$ is simple and $\Gamma_\mu$ is injective, we find that $\Gamma_{\mu} x=0$
and that $x=0$, which contradicts the choice of the nonzero vector $x$.

\medskip

\noindent {\bf (ii)} $l <d$. Then $\mathfrak C=\mathfrak C_1\oplus \{0\}$, where $\mathfrak C_1$ is a canonical subspace of $C^{l}[z]$
of which the degrees are all $\geq 1$. Using the relation $\mathfrak B=W\left( {\mathfrak C}_1\oplus \{0\} \right)$ we equip $\mathfrak C_1$ with an indefinite inner product that makes $W$ an isomorphism. Then $S_\mathfrak B$ and $S_{\mathfrak C_1}$ are isomorphic: $S_{\mathfrak C_1}=WS_{\mathfrak B}  W^{-1}$, hence $S_{\mathfrak C_1}$ is symmetric and has defect numbers equal to  $l$. Thus \eqref{tchpnkca} holds and it is not difficult to verify that also \eqref{tchpnkcb}
holds on $\mathfrak C_1$. Finally, since $Q(z)$ is the
$Q$-function for $S_\mathfrak B$ associated with the pair $\{A, \Gamma_z\}$, $Q(z)$ is the $Q$-function for $S_{\mathfrak C_1}$ associated with the pair $\{W^{-1}AW, W^{-1}\Gamma_z\}$. This all shows that we may apply  part (i) of this proof (with $W(z)=I_l$): There exists an $l \times l$  matrix polynomial $N_1(z)$ with ${\det}N_1(z) \not \equiv 0$ such that
$N_1(z)Q(z)$ is an $l \times l$  matrix polynomial and
$\mathfrak C_1=N_1 \mathcal L(Q)$. It follows that if
$$N(z)=W(z)\,{\diag}(N_1(z), I_{d-l}),$$
then ${\det}N(z)\not \equiv 0$, $N(z)\,{\diag}(Q(z), 0)$ is a $d \times d$ matrix polynomial and
$\mathfrak B=N\left( \mathcal L(Q) \oplus \{0\}\right)$.
\end{proof}

\section{Corollaries and examples}\label{s6}

In the next corollary we extend Theorem~\ref{tchpnk} to finite dimensional Pontryagin spaces of rational vector functions. A rational Nevanlinna kernel is a kernel of the form $K_{M, N}(z,w)$ as in \eqref{eqNevMNG}, in which $M(z)$ and $N(z)$ are rational $d \times d$ matrix functions satisfying \eqref{iPneuG} and \eqref{irankzG}.

\begin{corollary}\label{corrat}
Let $\fB$ be a finite dimensional Pontryagin space of rational $d \times 1$ vector functions and let $\Omega \subset \nC$ be the finite set of all the poles of the functions in $\fB$. Denote by $S_\fB$ the operator of multiplication by the independent variable in $\fB$ and by $E_\alpha$ the operator of evaluation at a point $\alpha \in \nC$. Then the reproducing kernel of $\fB$ is a rational Nevanlinna kernel if and only if the following two conditions hold:
\begin{enumerate}[{\rm (a)}]
\item \label{corratia}
The operator $S_\fB$ is symmetric in $\fB$.
\item \label{corratib}
For some $\alpha \in \nC\setminus\Omega$ we have  $\ran \bigl(S_\fB -\alpha \bigr) = \fB \cap \ker E_{\alpha}$.
\end{enumerate}
\end{corollary}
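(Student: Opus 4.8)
The plan is to deduce Corollary~\ref{corrat} from Theorem~\ref{tchpnk} by clearing denominators with a scalar polynomial. Fix a nonzero scalar polynomial $q(z)$, put $\fB' := q\fB = \{qf : f \in \fB\}$, and let $Q$ denote the linear bijection $f \mapsto qf$ of $\fB$ onto $\fB'$; transporting $\kip_\fB$ along $Q$ makes $Q$ unitary and $\fB'$ a finite dimensional Pontryagin space. Three elementary observations drive the argument. First, multiplication by $z$ commutes with $Q$, so $S_{\fB'} = Q S_\fB Q^{-1}$, whence $S_{\fB'}$ is symmetric in $\fB'$ if and only if $S_\fB$ is symmetric in $\fB$. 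Second, for every $\alpha$ with $q(\alpha) \neq 0$ one has $\ker E_\alpha^{\fB'} = Q(\fB \cap \ker E_\alpha)$ and $\ran(S_{\fB'} - \alpha) = Q\bigl(\ran(S_\fB - \alpha)\bigr)$, so at such $\alpha$ the equality in \eqref{tchpnkcb} holds for $\fB'$ exactly when it holds for $\fB$. Third, a direct computation with the reproducing property shows that the reproducing kernel of $\fB'$ is $K'(z,w) = q(z)\,q(w)^*K(z,w)$ (with $K$ the reproducing kernel of $\fB$), and for rational $d \times d$ matrix functions $M,N$ one has the identity $K_{qM,\,qN}(z,w) = q(z)\,q(w)^*K_{M,N}(z,w)$.

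For the ``if'' direction, assume \eqref{corratia} and \eqref{corratib} and choose $q$ vanishing on $\Omega$ to high enough order that $\fB' = q\fB \subset \nC^d[z]$. By the first two observations \eqref{tchpnkca} holds for $\fB'$, and \eqref{tchpnkcb} holds for $\fB'$ at the point $\alpha \in \nC \setminus \Omega$ provided by \eqref{corratib} (since $q(\alpha) \neq 0$). Theorem~\ref{tchpnk} then yields matrix polynomials $M'(z), N'(z)$ with $M'(z)N'(z^*)^* = N'(z)M'(z^*)^*$ for all $z$, with $\rank\bigl[M'(z)\ \ N'(z)\bigr] = d$ for at least one $z$, and with $K' = K_{M',N'}$. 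Put $M := q^{-1}M'$ and $N := q^{-1}N'$, which are rational. By the third observation $K_{M,N}(z,w) = q(z)^{-1}q(w)^{-*}K'(z,w) = K(z,w)$; dividing the polynomial identity for $M', N'$ by the scalar $q(z)q(z^*)^*$, which is nonzero off a finite set, yields an identity of rational functions, hence \eqref{iPneuG} on $\hol(M) \cap \hol(N)$; and $\rank\bigl[M(z)\ \ N(z)\bigr] = \rank\bigl[M'(z)\ \ N'(z)\bigr] = d$ at all but finitely many points gives \eqref{irankzG}. Thus $K$ is a rational Nevanlinna kernel.

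For the ``only if'' direction, assume $K = K_{M,N}$ with $M, N$ rational $d \times d$ matrix functions satisfying \eqref{iPneuG} and \eqref{irankzG}, and choose $q$ whose zeros include all poles of $M$ and of $N$, with multiplicities large enough that $qM$ and $qN$ are matrix polynomials. By \eqref{iPneuG} the numerator of $K_{qM,qN}(z,w)$ vanishes on the diagonal $z = w^*$, so $K_{qM,qN}(z,w) = q(z)q(w)^*K(z,w)$ is a matrix polynomial in $z$ and $w^*$; in particular the columns $q(z)K(z,w)x$ are vector polynomials, and since the columns $K(\cdot,w)x$ with $q(w)\neq 0$ span $\fB$ we get $\fB' = q\fB \subset \nC^d[z]$. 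Hence $K_{qM,qN}$ is a polynomial Nevanlinna kernel ($qM, qN$ are matrix polynomials, the identity $qM(z)(qN)(z^*)^* = (qN)(z)(qM)(z^*)^*$ holds on a cofinite set and so everywhere by polynomiality, and the rank condition transfers as above), and it is the reproducing kernel of $\fB'$. Theorem~\ref{tchpnk} gives \eqref{tchpnkca} and \eqref{tchpnkcb} for $\fB'$; by \eqref{eqdetrannul} of Theorem~\ref{tchucsg} the set of $\alpha$ for which \eqref{tchpnkcb} holds for $\fB'$ is cofinite, hence contains a point $\alpha$ with $q(\alpha) \neq 0$, so $\alpha \notin \Omega$. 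Transporting back along $Q^{-1}$ yields \eqref{corratia} and \eqref{corratib}.

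The one genuinely delicate point is the bookkeeping of which poles $q$ must cancel. In the ``only if'' direction one must observe that a $q$ clearing the poles of $M$ and $N$ automatically makes $q\fB$ a space of vector polynomials, precisely because the numerator of $K_{qM,qN}$ vanishes on the diagonal $z = w^*$ by virtue of \eqref{iPneuG}; without this, the two requirements on $q$ (clearing the poles of $\fB$, and clearing those of $M$ and $N$) would have to be imposed separately. Everything else is a routine transport of the operator-theoretic structure along the unitary multiplication operator $Q$, together with the cofiniteness of the set of admissible $\alpha$ recorded in Theorem~\ref{tchucsg}.
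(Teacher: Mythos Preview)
Your proof is correct and follows essentially the same approach as the paper: clear denominators by a scalar polynomial, transport the structure along the resulting unitary multiplication operator, apply Theorem~\ref{tchpnk} to the polynomial space, and transport back. The only difference is one of exposition: in the ``only if'' direction the paper simply asserts that the elements of $r\fB$ are polynomials, whereas you spell out why a $q$ that clears the poles of $M$ and $N$ automatically clears the poles of the kernel (via \eqref{iPneuG}) and hence of $\fB$---a point the paper leaves implicit.
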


\begin{proof}
Assume (\ref{corratia}) and (\ref{corratib}). Let $q(z)$ be the monic scalar polynomial of minimal degree such that $\mathfrak B':=\{q(z)f(z)\,:\, f \in  {\mathfrak B}\}$ consists of polynomials. Equip $\mathfrak B'$ with the Pontryagin space inner product that makes the mapping $q: \mathfrak B \rightarrow \mathfrak B'$ of multiplication by $q(z)$ a unitary mapping. Then items \eqref{tchpnkca} and \eqref{tchpnkcb} of Theorem~\ref{tchpnk} hold for $\mathfrak B'$. Hence $\mathfrak B'$ has a polynomial reproducing Nevanlinna kernel $K_{M,N}(z,w)$. It follows that
$\mathfrak B$ has reproducing kernel $K_{M/q, N/q}(z,w)$.

Now assume $K_{M,N}(z,w)$ is a rational reproducing Nevanlinna kernel of $\mathfrak B$. Let $r(z)$ be a polynomial such that $r(z)M(z)$ and
$r(z)N(z)$ are polynomials and hence form a polynomial Nevanlinna pair
$\{r(z)M(z), r(z)N(z)\}$. Then $K_{rM,rN}(z,w)$ is a polynomial reproducing Nevanlinna kernel of the space $\mathfrak B '':=\{r(z)f(z)\,:\,f(z) \in \mathfrak B\}$ equipped with the inner product that makes multiplication by $r(z)$ an isomorphism from $\mathfrak B$ onto $\mathfrak B''$. Since the elements of $\mathfrak B''$ are polynomials, we can apply Theorem~\ref{tchpnk} to conclude that
items \eqref{tchpnkca} and \eqref{tchpnkcb} hold for the space $\mathfrak B''$. Since multiplication by $r(z)$ and by  $z$ commute,
\eqref{tchpnkca} implies (\ref{corratia}).  By Theorem~\ref{tchucsg} and \eqref{eqdetrannul}, the equality
\begin{equation} \label{b"}
\ran \bigl(S_{\mf B''} - \alpha \bigr) = \mf B'' \cap \ker E_{\alpha}
\end{equation}
holds for all but finitely many $\alpha \in \mathbb C$. Choose
$\alpha \in \nC\setminus\Omega$ such that \eqref{b"} is valid. Then for this $\alpha$ item (\ref{corratib}) holds.
\end{proof}

\begin{corollary} \label{Cor2}
Let $(\fB, [\,\cdot\,,\,\cdot\,]_\fB)$ be a finite dimensional Pontryagin subspace of $\mathbb C^{d}[z]$ whose reproducing kernel is a Nevanlinna kernel determined by a generalized Nevanlinna pair. Let $J$ be a fundamental symmetry on $\mathfrak B$. Then the Hilbert space $(\fB, [J\,\cdot\,,\,\cdot\,]_\fB)$
has a reproducing Nevanlinna kernel determined by a Nevanlinna pair if and only if $S_\mathfrak B$ is symmetric in this space.
\end{corollary}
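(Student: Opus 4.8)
The plan is to reduce the corollary to Theorem~\ref{tchpnk} applied to the Hilbert space $(\mathfrak B,[J\,\cdot\,,\,\cdot\,]_{\mathfrak B})$, the crucial point being that condition \eqref{tchpnkcb} of that theorem is a property of $\mathfrak B$ as a \emph{linear} space only, hence is insensitive to replacing $[\,\cdot\,,\,\cdot\,]_{\mathfrak B}$ by $[J\,\cdot\,,\,\cdot\,]_{\mathfrak B}$.

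First I would note that, $J$ being a fundamental symmetry, $(\mathfrak B,[J\,\cdot\,,\,\cdot\,]_{\mathfrak B})$ is a finite dimensional Hilbert space of $d\times 1$ vector polynomials, so Theorem~\ref{tchpnk} applies to it, with the \emph{same} multiplication operator $S_{\mathfrak B}$ and evaluation operators $E_\alpha$ as for $(\mathfrak B,[\,\cdot\,,\,\cdot\,]_{\mathfrak B})$. The forward implication is then immediate: if the reproducing kernel of $(\mathfrak B,[J\,\cdot\,,\,\cdot\,]_{\mathfrak B})$ is a Nevanlinna kernel determined by a Nevanlinna pair, the ``only if'' part of Theorem~\ref{tchpnk} yields \eqref{tchpnkca}, i.e. $S_{\mathfrak B}$ is symmetric in $(\mathfrak B,[J\,\cdot\,,\,\cdot\,]_{\mathfrak B})$.

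For the converse I would invoke the hypothesis on $(\mathfrak B,[\,\cdot\,,\,\cdot\,]_{\mathfrak B})$. Because the elements of $\mathfrak B$ are polynomials, its reproducing kernel is a polynomial, and being a Nevanlinna kernel it is a polynomial Nevanlinna kernel; thus the ``only if'' part of Theorem~\ref{tchpnk} (equivalently Corollary~\ref{corrat} with empty pole set) shows that \eqref{tchpnkcb} holds for $(\mathfrak B,[\,\cdot\,,\,\cdot\,]_{\mathfrak B})$, namely $\ran(S_{\mathfrak B}-\alpha)=\mathfrak B\cap\ker E_\alpha$ for some $\alpha\in\nC$. This is an identity between the ranges of the linear maps $S_{\mathfrak B}-\alpha$ and $E_\alpha$ on the vector space $\mathfrak B$, so it holds verbatim for $(\mathfrak B,[J\,\cdot\,,\,\cdot\,]_{\mathfrak B})$ as well. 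Now assuming that $S_{\mathfrak B}$ is symmetric in $(\mathfrak B,[J\,\cdot\,,\,\cdot\,]_{\mathfrak B})$, both \eqref{tchpnkca} and \eqref{tchpnkcb} hold for this space, so the ``if'' part of Theorem~\ref{tchpnk} makes its reproducing kernel a polynomial Nevanlinna kernel; and since $(\mathfrak B,[J\,\cdot\,,\,\cdot\,]_{\mathfrak B})$ is a Hilbert space, this kernel has no negative squares, so the representing pair of matrix polynomials is a Nevanlinna pair (not a genuinely generalized one).

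I do not expect a real obstacle: the argument is bookkeeping around Theorem~\ref{tchpnk}. The only step that needs a careful sentence is the transfer of \eqref{tchpnkcb} from $(\mathfrak B,[\,\cdot\,,\,\cdot\,]_{\mathfrak B})$ to $(\mathfrak B,[J\,\cdot\,,\,\cdot\,]_{\mathfrak B})$, which rests on the fact---made precise in Theorem~\ref{tchucsg} and the remark following it---that \eqref{tchpnkcb} depends only on the linear structure of $\mathfrak B$.
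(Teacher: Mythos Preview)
Your proof is correct and follows essentially the same approach as the paper: both rest on the observation that condition \eqref{tchpnkcb} of Theorem~\ref{tchpnk} depends only on the linear structure of $\mathfrak B$ and hence transfers from $(\mathfrak B,[\,\cdot\,,\,\cdot\,]_{\mathfrak B})$ to $(\mathfrak B,[J\,\cdot\,,\,\cdot\,]_{\mathfrak B})$. Your write-up is more detailed, and you also make explicit why the resulting pair is a Nevanlinna pair rather than a generalized one (the Hilbert-space kernel has no negative squares), a point the paper leaves implicit.
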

The corollary follows from Theorem~\ref{tchpnk}, because condition \eqref{tchpnkcb} is independent of the topology on $\mathfrak B$.

\begin{example}\label{exci} Consider the subspace $\mathfrak B$ of $\mathbb C^2[z]$ spanned by the columns of the matrix
$$B(z)=\begin{bmatrix} 1&z&z^2&0 \\ 0&0&0&1
\end{bmatrix}$$
and equipped with the inner product $[\,\cdot\,,\,\cdot\,]_\fB$ so that
$$G=\begin{bmatrix} 1&0&0&0\\ 0&0&0&1 \\ 0&0&1&0\\0&1&0&0\end{bmatrix}$$
is the Gram matrix associated with $B(z)$: $G=[B,B]_\fB$. The spectral
decomposition of $G$ is $G=UJU^*$ with unitary matrix
\[
U = \frac{1}{\sqrt{2}} \left[\!\!\!
\begin{array}{rrrr}
 0 & 0 & 0 & \sqrt{2} \\
 - 1 & 1 & 0 & 0 \\
 0 & 0 & \sqrt{2} & 0 \\
 1 &  1 & 0 & 0
\end{array}
\right] \quad \text{and} \quad J=\left[\!\!\!
\begin{array}{rrrr}
 -1 & 0 & 0 & 0 \\
 0 & 1 & 0 & 0 \\
 0 & 0 & 1 & 0 \\
 0 & 0 & 0 & 1
\end{array}
\right].
\]
It follows that $(\mathfrak B, [\,\cdot\,,\,\cdot\,]_\fB)$ is a Pontryagin space with positive index $3$ and negative index $1$.
The equality
$[BU, BU]_\fB=J$
defines a fundamental decomposition of $\mathfrak B$ with corresponding fundamental symmetry $\mathcal J$ determined by
$\mathcal J BU= BUJ$.
In the Hilbert space inner product $[\,\cdot\,,\,\cdot\,]_{\mathcal J}:=[{\mathcal J}\,\cdot\,,\,\cdot\,]_\fB$ we have
$[BU, BU]_{\mathcal J}=J^2=I_n$
and hence
$[B,B]_{\mathcal J}=I_n$. The operator $S_\mathfrak B$
is symmetric in the Pontryagin space $(\mathfrak B, [\,\cdot\,,\,\cdot\,]_\fB)$, but not in the Hilbert space $(\mathfrak B, [\,\cdot\,,\,\cdot\,]_{\mathcal J})$. Since $\mathfrak B$ is
a canonical subspace of $\mathbb C^2[z]$, Theorem \ref{tchucsg} implies that Theorem \ref{tchpnk} \eqref{tchpnkcb} holds in $\mathfrak B$. Hence, according to Theorem \ref{tchpnk}, the Pontryagin space
$(\mathfrak B, [\,\cdot\,,\,\cdot\,]_\fB)$ has a reproducing Nevanlinna kernel, whereas the reproducing Hilbert  space $(\mathfrak B, [\,\cdot\,,\,\cdot\,]_{\mathcal J})$ does not have a reproducing Nevanlinna kernel. \hfill $\square$
\end{example}

\begin{corollary} \label{Cor3}
Let $\fB$ be a finite dimensional Pontryagin subspace of $\mathbb C^{d}[z]$ whose reproducing kernel is a Nevanlinna kernel. Let $\fB_0$ be a Pontryagin subspace of $\mathfrak B$. Then the reproducing kernel of $\fB_0$ is a Nevanlinna kernel if and only if
for some $\alpha \in \nC$ we have  $\ran \bigl(S_{\fB_0} -\alpha \bigr) = \fB_0 \cap \ker E_{\alpha}$.
\end{corollary}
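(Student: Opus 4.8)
The plan is to deduce the corollary directly from Theorem~\ref{tchpnk}, exploiting an asymmetry between its two hypotheses: the symmetry condition~\eqref{tchpnkca} is automatically inherited by Pontryagin subspaces, so that only the range condition~\eqref{tchpnkcb} remains genuinely at issue.

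Two elementary preliminaries are needed. First, since $\fB$ is finite dimensional and $\fB_0$ carries the restriction of $\kip_\fB$ as a Pontryagin inner product, $\fB_0$ is a finite dimensional Pontryagin space of $d\times1$ vector polynomials, hence (Remark~\ref{finitedim}) a reproducing kernel space, and Theorem~\ref{tchpnk} applies to it. Second, $S_{\fB_0}$ is the restriction of $S_\fB$: from $\fB_0\subseteq\fB$ we get $\dom S_{\fB_0}=\fB_0\cap\wh S^{-1}\fB_0\subseteq\fB\cap\wh S^{-1}\fB=\dom S_\fB$, and on this common domain both operators act as multiplication by~$z$.

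The only if direction is then immediate: if the reproducing kernel of $\fB_0$ is a Nevanlinna kernel, Theorem~\ref{tchpnk} applied to $\fB_0$ yields in particular $\ran(S_{\fB_0}-\alpha)=\fB_0\cap\ker E_\alpha$ for some $\alpha\in\nC$. For the if direction, assume this equality holds for some $\alpha$; this is exactly condition~\eqref{tchpnkcb} for $\fB_0$, so it only remains to verify~\eqref{tchpnkca}, i.e.\ that $S_{\fB_0}$ is symmetric. Since $\fB$ has a reproducing Nevanlinna kernel, Theorem~\ref{tchpnk} applied to $\fB$ gives that $S_\fB$ is symmetric; using that $\kip_{\fB_0}$ is the restriction of $\kip_\fB$ and that $S_{\fB_0}$ is the restriction of $S_\fB$, we obtain
\[
[S_{\fB_0}f,g]_{\fB_0}=[S_\fB f,g]_\fB=[f,S_\fB g]_\fB=[f,S_{\fB_0}g]_{\fB_0},\qquad f,g\in\dom S_{\fB_0}.
\]
Hence $S_{\fB_0}$ is symmetric, and Theorem~\ref{tchpnk} applied to $\fB_0$ shows that its reproducing kernel is a Nevanlinna kernel.

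There is no real obstacle here; the content of the corollary is the observation that, among the two defining conditions for a reproducing Nevanlinna kernel, symmetry of the multiplication operator passes to Pontryagin subspaces automatically, whereas the geometric range condition need not, and so must be re-imposed. One could add that the hypothesis that $\fB_0$ be a \emph{Pontryagin} subspace, rather than an arbitrary subspace, is precisely what guarantees that $\fB_0$ is again a reproducing kernel space to which Theorem~\ref{tchpnk} can be applied.
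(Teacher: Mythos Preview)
Your proof is correct and follows the same line as the paper's: the corollary is deduced from Theorem~\ref{tchpnk} by observing that $S_{\fB_0}\subseteq S_\fB$ (in graph notation), so symmetry of $S_\fB$ in $\fB$ is inherited by $S_{\fB_0}$ in $\fB_0$, leaving only condition~\eqref{tchpnkcb} to be checked. Your write-up is in fact more detailed than the paper's one-sentence justification, but the argument is identical.
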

The corollary follows from Theorem~\ref{tchpnk}, because the hypothesis implies that $S_{\fB_0}$, being a subset of $S_\fB$,  is symmetric in $\mathfrak B_0$, that is, that  \eqref{tchpnkca} holds
for $S_{\mathfrak B_0}$.
\begin{example}
Consider the Hilbert subspace
\[
{\mathfrak B}_0 = \lspan \left\{ \begin{bmatrix} 1 \\ 0\end{bmatrix}, \begin{bmatrix} z^2 \\ 0 \end{bmatrix} \right\}
\]
of the space ${\mathfrak B}$ in Example~\ref{exci}. Then for arbitrary $\alpha \in \nC$ we have
\[
\ran \bigl(S_{\fB_0} - \alpha \bigr) = \{0\} \quad \text{and} \quad
 \fB_0 \cap \ker E_{\alpha} =  \lspan \left\{ \begin{bmatrix} z^2 -\alpha^2 \\ 0 \end{bmatrix} \right\}.
\]
Thus the condition $\ran \bigl(S_{\fB_0} -\alpha \bigr) = \fB_0 \cap \ker E_{\alpha}$ does not hold for any $\alpha \in \nC$. Hence   Corollary~\ref{Cor3} implies that the reproducing kernel of $\fB_0$, which is calculated to be
\[
K(z,w) = \begin{bmatrix} 1 + z^2 w^{*2} & 0 \\[6pt] 0 & 0
\end{bmatrix}, \quad z, w \in \nC,
\]
is not a Nevanlinna kernel. This fact can be verified using \cite[Theorem~1.3]{gs}. First observe that for all $z, w \in \nC$ we have
\[
(z - w^*) K(z,w) = M(z) N(w)^*- N(z) M(w)^* = \bigl[ M(z)  \ \ N(z) \bigr] \left[\!\! \begin{array}{r}
N(w)^* \\[6pt] -M(w)^* \end{array}\!\! \right],
\]
where
\[
N(z) = \begin{bmatrix} 1 & z^2 \\ 0 & 0
\end{bmatrix} \quad \text{and} \quad M(z) = z N(z), \quad z \in \nC.
\]
Now \cite[Theorem~1.3 and Section~4]{gs} imply that for any $2\times 2$ matrix polynomials $M_1(z)$ and $N_1(z)$ such that
\begin{equation} \label{eqnnev}
(z - w^*) K(z,w) =  \bigl[ M_1(z)  \ \ N_1(z) \bigr] \left[\!\! \begin{array}{r}
N_1(w)^* \\[6pt] - M_1(w)^* \end{array}\!\! \right], \quad z,w \in \nC,
\end{equation}
there exists a $4\times 4$ invertible matrix $S$ such that
\[
\bigl[ M_1(z)  \ \ N_1(z) \bigr] = \bigl[ M(z)  \ \ N(z) \bigr] S, \quad z \in \nC.
\]
Hence \eqref{eqnnev} yields that $\rank \bigl[ M_1(z)  \ \ N_1(z) \bigr] = 1$ for all $z\in\nC$. Consequently $K(z,w)$ is not a Nevanlinna kernel.
Using the same results from \cite{gs} one can also show that the
scalar reproducing kernel $K(z,w)=1+z^2w^{*2}$ of the Hilbert space with orthonormal basis $\{1,z^2\}$  is not a Nevanlinna kernel.
 \hfill $\Box$
\end{example}

\medskip

We end the paper with two examples in which $\det K(z,w) \equiv 0$. These examples also show that the proof of Theorem \ref{tQ} is constructive.
\begin{example}\label{example1}
Consider the space $\mathfrak B$ with reproducing kernel
\begin{equation*}
\mathcal K(z,w) =\begin{bmatrix}  0 & 0 & -1 \\ 0 & 0 & -w^* \\
-1 & -z & 0
\end{bmatrix}.
\end{equation*}
 We show that, even though ${\det}K(z,w) \equiv 0$, the kernel is a Nevanlinna kernel. We follow the proof of the first part of Theorem~\ref{tQ} and construct two Nevanlinna pairs that determine $K(z,w)$.
The space $\mathfrak B$ is spanned by the columns of the $3 \times 4$ matrix polynomial
$$B(z)=\begin{bmatrix} 1& 0& 0&0
\\0 &1 & 0 &0 \\ 0& 0&1&z  \end{bmatrix}.$$
It follows that
$\mathfrak B$ is a canonical subspace of $\mathbb C^3[z]$ with degrees $1$, $1$ and $2$.
The Gram matrix associated with $B(z)$ is given by
$$G= [  B,  B]_\fB=\begin{bmatrix} 0 & -I\\ -I & 0 \end{bmatrix},
$$
hence $\mathfrak B$ is a Pontryagin space with positive and negative index $2$. The operator of multiplication by $z$ on $\mathfrak B$ is given by
$$
S_\mathfrak B=\left\{\left\{  B\begin{bmatrix} 0 \\ 0 \\ a \\ 0 \end{bmatrix}
,  B \begin{bmatrix} 0 \\ 0 \\ 0 \\ a
\end{bmatrix}\right\} \,:\,  a \in \mathbb C \right\}.
$$
It is easy to  see that \eqref{tchpnkca} and \eqref{tchpnkcb} of Theorem~\ref{tchpnk} are satisfied. The defect numbers of $S_\mathfrak B$ are both equal to $3$, see Remark~\ref{remdefect}. The $Q$-function of $S_\mathfrak B$ associated with the self-adjoint extension
$$
A=\left\{\left\{  B\begin{bmatrix} a \\ 0 \\ b \\ 0 \end{bmatrix}
,  B \begin{bmatrix} 0 \\ c \\ 0 \\ d
\end{bmatrix}\right\} \,:\,  a,b,c,d \in \mathbb C \right\}
$$
of $S_\mathfrak B$ (which is multi-valued and has a nonempty resolvent set) and the defect mappings
$\Gamma_z : \mathbb C^3 \rightarrow \ker(S^*-z)$ defined by
$$\Gamma_z =\left(I+(z-\iu)(A-z)^{-1}\right)\Gamma_\iu= B
\begin{bmatrix}\iu /z& 0&0 \\ \iu & 0&0 \\ 0&\iu/z& 0 \\ 0&0&1
\end{bmatrix}
$$
is
$$
Q(z)=Q_0+ \begin{bmatrix}
0& 1/z& \iu z \\ 1/z& 0&0 \\-\iu z& 0&0
\end{bmatrix}, \quad Q_0=Q_0^*.
$$
We find that $X(z)=Y(z)Q(z)$ and $Y(z)=\left(\Gamma_{z^*}^*\mathbf I\right)^{-1}$ form a full generalized Nevanlinna pair of matrix polynomials:
$$
X(z)=Y(z)Q_0+
\begin{bmatrix}-\iu &0&0 \\ \iu z&0&0\\ 0&-\iu &z^2
\end{bmatrix}
\quad \text{and} \quad
Y(z)=
\begin{bmatrix} 0&-\iu z&0 \\ 0&0&-1 \\ -\iu z&0&0
\end{bmatrix}
$$
such that $K(z,w)=K_{X,Y}(z,w)$ and $\bigl[ X(z) \,\ Y(z)\bigr]$ is row reduced with Forney indices $1$, $1$ and $2$, which is in accordance with Remark~\ref{rlast}.

The $Q$-function associated with the self-adjoint operator extension $A$ of $S_\mathfrak B$ and defect mappings $\Gamma_z$ defined by
$$
A B= B
\begin{bmatrix} 0&1&0&0\\0&0&0&0\\0&0&0&0\\0&0&1&0
\end{bmatrix},\quad
\Gamma_z=B
\begin{bmatrix} -1/z^2& 0&0 \\ -1/z& 0&0 \\
0& \iu /z& 0 \\ 0& -(z-\iu)/z^2& \iu /z
\end{bmatrix}
$$
is given by
$$
Q(z)=Q_1+\begin{bmatrix}  0 & \dfrac{-\iu z^2 +z-\iu}{z^2} & \dfrac{-\iu}{z} \\[2mm] \dfrac{\iu z^2+z+\iu}{z^2} & 0 &0 \\[2mm] \dfrac{\iu}{z} & 0 & 0
\end{bmatrix}, \quad Q_1=Q_1^*.
$$
Again we find that $M(z)=N(z)Q(z)$ and $N(z)=\left(\Gamma_{z^*}^*\mathbf I\right)^{-1}$ are matrix polynomials:
$$M(z)=N(z)Q_1+ \begin{bmatrix} z &0&0 \\ 1&0&0 \\
0& -\iu z^2+z-\iu  & -\iu z \end{bmatrix}, \
N(z)=\begin{bmatrix} 0 &-\iu z & z+\iu \\ 0& 0 & -\iu z \\ z^2 & 0&0
\end{bmatrix}
$$
which form a full generalized Nevanlinna pair such that
$K(z,w)=K_{M,N}(z,w)$ and $\begin{bmatrix} M(z) & N(z) \end{bmatrix}$ is row reduced with Forney indices $1$, $1$ and $2$.
$\hfill \square$
\end{example}

Two generalized Nevanlinna pairs $\{X(z),Y(z)\}$ and $\{M(z),N(z)\}$ of $d\times d$ matrix polynomials define the same Nevanlinna kernel if one is a $J$-unitary transformation of the other, that is, if they are connected via the formulas
$$M(z)=X(z)A+Y(z)C, \quad N(z)=X(z)B+Y(z)D,$$
where $A$, $B$, $C$ and $D$ are constant $d \times d$ matrices such that if we set
$$U =\begin{bmatrix} A & B \\ C & D \end{bmatrix} \quad \text{and} \quad
J=\begin{bmatrix} 0 & \iu I_d \\ - \iu I_d & 0 \end{bmatrix},
$$
then $U$ is $J$-unitary: $UJU^*=J$. The pairs $\{X(z), Y(z)\}$ and $\{M(z), N(z)\}$ in Example \ref{example1} (with arbitrary constant self-adjoint  matrices $Q_0$ and $Q_1$)  are connected via a $J$-unitary transformation. The following example shows that the converse of the foregoing statement does not hold.

\begin{example}\label{example2}
Consider the Nevanlinnna pair $\{X(z), Y(z)\}$ given by
$$X(z)={\diag}(z,0,z^2), \quad Y(z)={\diag}(0,z,z).$$
Then the space $\mathfrak B$ with reproducing kernel $K_{X,Y}(z,w)={\diag}(0,0,zw^*)$ is a $1$-dimensional Hilbert space: it is spanned by
$
B(z)=\begin{bmatrix} 0 & 0 & z \end{bmatrix}^\top
$
and the corresponding Gram matrix is $G=[B,B]_\fB=1$.
Note that ${\det}X(z)\equiv 0$, ${\det}Y(z)\equiv 0$ and $\mathbf P(z):=\bigl[ X(z) \,\  Y(z)\bigr]$ does not have full rank at $z=0$: $\mathbf P(0)=0$. We show that the pair $\{X(z), Y(z)\}$ can be replaced by a Nevanlinna pair $\{M(z), N(z)\}$ such that ${\det}N(z)\not \equiv 0$.

Since $\mathbf P(z)$ does not have full rank for all $z \in \mathbb C$, to calculate the Forney indices we must first apply Lemma~\ref{tief}. We write $\mathbf P(z)$ as $\mathbf P(z)=G(z)\mathbf T(z)$ with
 $$
G(z)=\begin{bmatrix}0&0&z\\ 0&z&0\\z&0&0 \end{bmatrix},\quad  \mathbf T(z)=\begin{bmatrix}0&0&z&0&0&1 \\ 0&0&0&0&1&0 \\ 1&0&0&0&0&0 \end{bmatrix}.
 $$
Since $\mathbf T(z)$  has full rank for all $z \in \mathbb C$ and is row reduced, the Forney indices of ${\mathbf P}(z)$ are those of ${\mathbf T}(z)$ and they are $\mu_1=1$, $\mu_2=\mu_3=0$. This fits in well with the observations after Theorem~\ref{tchucsg} indicating that $\dim {\mathfrak B}=1$ and the defect numbers of the symmetric operator $S_{\mathfrak B} = \{\{0,0\}\}$ are both equal to $1$. We follow part (ii) of the proof of Theorem~\ref{tQ} and write ${\mathfrak B} = W\left({\mathfrak C}_1 \oplus \{0\}\right)$ with
$$
W(z)=\begin{bmatrix} 0&0&1\\ 0&1&0 \\ z&0&0 \end{bmatrix}
$$
and $\mathfrak C_1=\mathbb C$. We make the multiplication operator $W$ an isometry when $\mathbb C$ is equipped with the Euclidean inner product. Then $S_{\mathfrak C_1}=\{\{0,0\}\}$ is symmetric. The defect subspaces ${\ker}(S_{{\mathfrak C}_1}^*-z)$ all coincide with $\mathbb C$ and $A$ is a self-adjoint extension of $S_{{\mathfrak C}_1}$ if and only if $A=A_m$, the operator of multiplication by $m$, $m \in \mathbb  R$, or $A=A_{\rm rel}=\bigl\{\{0,c\} \,:\, c \in \mathbb C\bigr\}$. Since ${\mathfrak C}_1$ is a Hilbert space, all self-adjoint operators and relations have a non-empty resolvent set.
Choose $\mu \in \mathbb C \setminus \mathbb R$, $\gamma \in \mathbb C \setminus \{0\}$ and define $\Gamma_\mu: \mathbb C \rightarrow {\ker}(S_{\mathfrak C_1}^*-\mu)=\mathbb C$ by $\Gamma_\mu x=\gamma x$, $x \in \mathbb C$. Then the $Q$-function $q(z)$ of $S_{\mathfrak C _1}$ associated with $\{A, \Gamma_z\}$ is given by
$$
q(z)=q_0+\left\{\begin{array}{ll}
\dfrac{|m-\mu|^2|\gamma|^2}{m-z} & \text{if}\ A=A_m, \\[14pt]
|\gamma|^2 z & \text{if}\ A=A_{\rm rel},
\end{array}\right.
$$
where $q_0$ is an arbitrary real number, and
$$
\left(\Gamma_{z^*}^*1\right)^{-1}=\left\{
\begin{array}{ll} \dfrac{m-z}{\gamma^*(m-\mu^*)}& \text{if}\ A=A_m, \\[14pt]
\dfrac{1}{\gamma^*}& \text{if}\ A=A_{\rm rel}.
\end{array}\right.
$$
We find that $K_{X,Y}(z,w)=K_{M,N}(z,w)$ with matrix polynomials
$$
M(z)=N(z) \begin{bmatrix} q(z)&0&0 \\ 0&0&0 \\ 0&0&0\end{bmatrix} \quad \text{and} \quad
N(z)=W(z)\begin{bmatrix} \left(\Gamma_{z^*}^*1\right)^{-1}& 0 & 0 \\ 0&1&0 \\ 0&0&1
\end{bmatrix}.
$$
It is easy to see that the Nevalinna pairs $\{X(z), Y(z)\}$ and $\{M(z), N(z)\}$ are not related via a $J$-unitary transformation.
$\hfill \square$
\end{example}

\end{document}